\definecolor{MyGreen}{rgb}{0.13,0.55,0.13}
\theoremstyle{plain}
\newtheorem{theorem}{Theorem}
\newtheorem{lemma}[theorem]{Lemma}
\newtheorem{corollary}[theorem]{Corollary}
\newtheorem{proposition}[theorem]{Proposition}
\theoremstyle{definition}
\newtheorem{example}[theorem]{Example}
\numberwithin{equation}{section}
\newcommand{\R}{\mathbb R}
\newcommand{\N}{\mathbb N}
\newcommand{\Z}{\mathbb Z}
\newcommand{\Q}{\mathbb Q}
\newcommand{\C}{\mathbb C}
\newcommand{\pr}{\delta}
\newcommand{\A}{\mathcal{A}}
\newcommand{\B}{\boldsymbol{\beta}}
\newcommand{\Dig}{\mathcal{D}}
\newcommand{\val}{\mathrm{val}}
\newcommand{\lex}{\mathrm{lex}}
\newcommand{\Fac}{\mathrm{Fac}}
\newcommand{\Pref}{\mathrm{Pref}}
\newcommand{\ceil}[1]{\left\lceil#1\right\rceil}
\newcommand{\floor}[1]{\left\lfloor#1\right\rfloor}
\newcommand{\Int}{[\![0,p-1]\!]}
\newcommand{\DB}{d_{\boldsymbol{\beta}}}
\newcommand{\DBi}[1]{d_{{\B}^{(#1)}}}
\newcommand{\qDB}{d_{\boldsymbol{\beta}}^{*}}
\newcommand{\qDBi}[1]{d_{\boldsymbol{\beta}^{(#1)}}^{*}}
\newcommand{\couple}[2]{\left[\begin{smallmatrix} #1 \\ #2 \end{smallmatrix}\right]}
\title{Spectrum, algebraicity and normalization in alternate bases}
\author{\'Emilie Charlier$^{1,*}$, Célia Cisternino$^{1}$, Zuzana Mas\'akov\'a$^2$ and Edita Pelantov\'a$^2$}
\address{$^1$Department of Mathematics\\
University of Li\`ege\\
All\'ee de la D\'ecouverte 12,
4000 Li\`ege, Belgium\\
$^2$Department of Mathematics
\\
Czech Technical University in Prague\\
Trojanova 13, 120 00 Praha 2, Czech Republic
}
\thanks{\emph{E-mail address:} \texttt{echarlier@uliege.be, ccisternino@uliege.be zuzana.masakova@fjfi.cvut.cz\\ and edita.pelantova@fjfi.cvut.cz}\\
$^*$Corresponding author.}
\begin{document}
\tikzset{elliptic state/.style={draw,ellipse,minimum width=6cm,minimum height=1.5cm}}

\begin{abstract}
The first aim of this article is to give information about the algebraic properties of alternate bases $\B=(\beta_0,\dots,\beta_{p-1})$ determining sofic systems. We show that a necessary condition is that the product $\pr=\prod_{i=0}^{p-1}\beta_i$ is an algebraic integer and all of the bases $\beta_0,\ldots,\beta_{p-1}$ belong to the algebraic field $\Q(\pr)$. On the other hand, we also give a sufficient condition: if $\pr$ is a Pisot number and $\beta_0,\ldots,\beta_{p-1}\in\Q(\pr)$, then the system associated with the alternate base $\B=(\beta_0,\dots,\beta_{p-1})$ is sofic. The second aim of this paper is to provide an analogy of Frougny's result concerning normalization of real bases representations. We show that given an alternate base $\B=(\beta_0,\dots,\beta_{p-1})$ such that $\pr$ is a Pisot number and $\beta_0,\ldots,\beta_{p-1}\in\Q(\pr)$, the normalization function is computable by a finite Büchi automaton, and furthermore, we effectively construct such an automaton. An important tool in our study is the spectrum of numeration systems associated with alternate bases. The spectrum of a real number $\delta>1$ and an alphabet $A\subset {\mathbb Z}$ was introduced by Erd\H os et al. For our purposes, we use a generalized  concept with $\delta\in{\mathbb C}$ and $A\subset{\mathbb C}$ and study its topological properties.
\end{abstract}

\maketitle

\bigskip
\hrule
\bigskip

\noindent 2010 {\it Mathematics Subject Classification}: 11K16, 11R06, 37B10, 68Q45

\noindent \emph{Keywords: 
Expansions of real number,
Alternate base,
Pisot number,
Spectrum,
Normalization,
Büchi automaton,
Sofic systems
}

\bigskip
\hrule
\bigskip

\section{Introduction}

Alternate bases are particular cases of Cantor real bases, which were introduced by the first two authors in~\cite{CharlierCisternino2021} and then studied in~\cite{CharlierCisterninoDajani2021,Cisternino2021}. A Cantor real base is a sequence $\B=(\beta_n)_{n\in \N}$ of real numbers greater than $1$ such that $\prod_{n=0}^{+\infty}\beta_n=+\infty$. A $\B$-representation of a real number $x\in[0,1]$ is an infinite sequence $a_0a_1a_2\cdots$ of non-negative integers such that
\[
	x=\sum_{n=0}^{+\infty} \frac{a_n}{\prod_{k=0}^{n}\beta_k}.
\]
When choosing a constant sequence of bases $\B=(\beta,\beta,\ldots)$, we obtain the well-known R\'enyi numeration system associated with one real base $\beta$~\cite{Renyi1957}. Moreover, when choosing a sequence of integer bases $\B=(b_n)_{n\in \N}$, we obtain the so-called Cantor expansions of real numbers~\cite{Cantor1869}. Thus, Cantor real bases generalize both expansions in a real base and Cantor expansions. 

Two particular $\B$-representations of $x\in [0,1]$ can be obtained thanks to the greedy and lazy algorithms. In~\cite{CharlierCisternino2021}, the classical combinatorial results of the greedy $\beta$-expansions were generalized to the framework of Cantor re~al bases. In~\cite{Cisternino2021}, the lazy combinatorial properties of Cantor real bases were investigated. Moreover, in~\cite{CharlierCisterninoDajani2021}, the classical dynamical results of greedy and lazy $\beta$-expansions were generalized while focusing on periodic Cantor real bases $\B=(\beta_0,\ldots,\beta_{p-1},\beta_0,\ldots,\beta_{p-1},\ldots)$. Such Cantor real bases are called alternate bases and are simply denoted by $\B=(\beta_0,\ldots,\beta_{p-1})$. Since one base numeration systems as defined by R\'enyi have been studied extensively from many aspects, a lot of other questions can be investigated in the framework of Cantor real bases and in particular, of alternate bases. In this paper, we study some algebraic properties of alternate base expansions. 

Representations of real numbers involving more than one base simultaneously and independently aroused the interest of other mathematicians~\cite{CaalimaDemegillo2020,
Li2021,
Neunhauserer2021,
ZouKomornikLu2021}.  
But so far, most of the research was concentrated on the combinatorial properties of these representations as in~\cite{CharlierCisternino2021} and not on their dynamical or algebraic properties which are studied respectively in~\cite{CharlierCisterninoDajani2021} and in this paper.

The dynamical point of view of real base expansions was the focus of Bertrand-Mathis~\cite{Bertrand-Mathis1986} who showed that the numeration system with base $\beta>1$ defines a $\beta$-shift which is sofic, i.e., its factors form a language that is accepted by a finite automaton, if and only if the $\beta$-expansion of $1$ is eventually periodic. Real bases $\beta$ determining sofic $\beta$-shifts are called Parry numbers. An algebraic description of Parry numbers is not obvious. It is known that every Parry number is a Perron number, i.e., an algebraic integer $\beta>1$ whose conjugates are in modulus smaller than $\beta$. On the other hand, the set of Parry numbers includes Pisot numbers, i.e., algebraic integers greater than $1$ with conjugates inside the unit circle~\cite{Bertrand1977}. More detailed information on algebraic conjugates of a Parry number $\beta$ was given by Solomyak~\cite{Solomyak1994}.

An analogue of the result of Bertrand-Mathis for alternate bases $\B=(\beta_0,\dots,\beta_{p-1})$ was proved in~\cite{CharlierCisternino2021}. Namely, it is proved that the alternate base system defines a sofic $\B$-shift if and only if each of the $p$ greedy $\B^{(i)}$-expansions of $1$ is eventually periodic where $\B^{(i)}=(\beta_i,\ldots,\beta_{p-1},\beta_0,\ldots,\beta_{i-1})$. The first aim of this article is to give information about the algebraic properties of alternate bases $\B=(\beta_0,\dots,\beta_{p-1})$ determining sofic systems. In particular, in Theorem~\ref{Thm : AlleventuallyPeriodicAlgebraicField} we show a necessary condition, namely that the product $\pr=\prod_{i=0}^{p-1}\beta_i$ is an algebraic integer and all of the bases $\beta_0,\ldots,\beta_{p-1}$ belong to the algebraic field $\Q(\pr)$. On the other hand, in Theorem~\ref{Thm : PisotExtendedFieldThenUltPer}, we also give a sufficient condition: if $\pr$ is a Pisot number and $\beta_0,\ldots,\beta_{p-1}\in\Q(\pr)$, then the $\B$-shift is sofic.

The importance of the class of Pisot bases in connection to automata was pointed out also by Frougny~\cite{Frougny1992} who showed that normalization in a real base $\beta>1$ which maps any $\beta$-representation of a real number in $[0,1)$ to its greedy $\beta$-expansion is computable by a finite Büchi automaton if $\beta$ is a Pisot number. 
The second aim of this paper is to provide an analogy of Frougny's result concerning normalization. In Theorem~\ref{Thm : Normalization}, we show that given an alternate base $\B=(\beta_0,\dots,\beta_{p-1})$ such that $\pr=\prod_{i=0}^{p-1}\beta_i$ is a Pisot number and $\beta_0,\ldots,\beta_{p-1}\in\Q(\pr)$, the normalization function is computable by a finite Büchi automaton.

An important tool in our proofs is the spectrum of numeration systems associated with alternate bases. Its definition shows that one needs to consider the spectrum of $\pr=\prod_{i=0}^{p-1}\beta_i$ with a more general alphabet of non-integer digits. Hence, we first study the spectrum in the general framework of a complex base $\pr$ such that $|\pr|>1$ with an alphabet $A\subset\C$, which is defined as
\[
	X^A(\pr)=\{\sum_{i=0}^{n}a_i\pr^i : n\in\N, a_i\in A\}.
\]
The notion of spectrum was originally introduced by Erd\H os, Jo\'o and Komornik for a base $\pr\in(1,2)$ and an alphabet of the form $A=\{0,1,\dots,m\}$~\cite{ErdosJooKomornik1990}. Topological properties of the spectrum determine many of the arithmetical aspects of the numeration system; see~\cite{FrougnyPelantova2018}. One of the main problems in the study of spectra is to describe bases which give spectra without accumulation points in dependence on the alphabet. For the case of real bases and symmetric integer alphabets, a complete characterization was given by Akiyama and Komornik~\cite{AkiyamaKomornik2013} and Feng~\cite{Feng2016}. 

In this paper, as an analogy to the results of~\cite{FrougnyPelantova2018}, we prove in Theorem~\ref{Thm : SetZeroSpectrumAccZeroAutomatonComplex} that the set of representations of zero in a complex base $\pr$ such that $|\pr|>1$ and an alphabet $A$ of complex number is accepted by a finite Büchi automaton if and only if the spectrum $X^A(\pr)$ has no accumulation point. Next, we deduce an analogue of Theorem~\ref{Thm : SetZeroSpectrumAccZeroAutomatonComplex} in the alternate base case, namely Theorem~\ref{Thm : SetZeroSpectrumAccZeroAutomatonAlternate}. This result makes use of a Büchi automaton called the zero automaton which generalizes that defined by Frougny~\cite{Frougny1992} and which is intimately linked with the Büchi automaton computing the normalization in alternate bases.

The paper is organized as follows. In Section~\ref{Sec : Preliminaries}, we first fix some notation and we provide the necessary background for a clear understanding of this work. Then, in Section~\ref{Sec : SpectrumComplex}, we study the spectrum $X^A(\delta)$ of a complex base $\pr$ over an arbitrary alphabet $A$ of complex numbers. In doing so, we also define and study the set $Z(\pr,A)$ of $\pr$-representations of zero over $A$ and an associated zero Büchi automaton $\mathcal{Z}(\pr,A)$. In Section~\ref{Sec : SpectrumAlternate}, we define the spectrum associated with an alternate base $\B=(\beta_0,\dots,\beta_{p-1})$ as a particular case of the complex spectra studied in the previous section. We then prove that the alternate base spectrum has no accumulation point if and only if the set of $\B$-representations of zero is accepted by a finite Büchi automaton, and furthermore, if and only if the alternate zero automaton is finite. Sections~\ref{Sec : NecessaryConditionForPeriodicity} and~\ref{Sec : SpectrumPeriodicity} are  concerned with the algebraic properties of the alternate base $\B=(\beta_0,\dots,\beta_{p-1})$ determining sofic systems. In particular, Section~\ref{Sec : NecessaryConditionForPeriodicity} and Section~\ref{Sec : SpectrumPeriodicity} respectively give the necessary and the sufficient conditions stated above. In Section~\ref{Sec : Normalization}, we show that if $\pr=\prod_{i=0}^{p-1}\beta_i$ is a Pisot number and each of the bases $\beta_i$ belongs to the algebraic field $\Q(\pr)$ then the normalization in the alternate base $\B$ is computable by a finite Büchi automaton, and we effectively construct such an automaton. We end this paper with some open questions.

\section{Preliminaries}
\label{Sec : Preliminaries}

Throughout this text, an interval of non-negative integers $\{i, \ldots , j\}$ with $i\le j$ is denoted $[\![i,j]\!]$ and $\floor{\cdot}$ and $\ceil{\cdot}$ respectively denote the floor and ceiling functions.

\subsection{Numbers}
An \emph{algebraic number} is a complex number that is a root of a monic polynomial (a polynomial whose leading coefficient is 1) with coefficients in $\Q$. The \emph{minimal polynomial} of an algebraic number $\beta$ is the monic polynomial of minimal degree that is annihilated by $\beta$. It is an irreducible polynomial over $\Q$ and its degree is the \emph{degree} of the algebraic number $\beta$. Roots of the same irreducible polynomial over $\Q$ are distinct and are said to be \emph{algebraically conjugate}.

An algebraic number is an \emph{algebraic integer} if it is a root of a monic polynomial in $\Z[x]$. It can be shown that its minimal polynomial has also integer coefficients, and thus a rational number which is not an integer is never an algebraic integer. The set of all algebraic integers is closed under addition, subtraction and multiplication and therefore is a commutative subring of the complex numbers.

A \emph{Pisot number} is an algebraic integer $\beta>1$ whose Galois conjugates all have modulus less than $1$. In particular, every integer is a Pisot number.

The smallest subfield of the field $\C$ of complex numbers containing a complex number $\beta$ is denoted by $\Q(\beta)$. If $\beta$ is an algebraic number of degree $d$ then this field is of the form
\[
	\Q(\beta)=\{\sum_{j=0}^{d-1} a_j\beta^j : a_j\in \Q\}.
\]
Let $\beta$ be an algebraic number of degree $d$ and let $\beta_2,\ldots,\beta_{d}$ be its Galois conjugates. We set $\beta_1=\beta$. Then for all $k\in [\![1,d]\!]$, the fields $\Q(\beta)$ and $\Q(\beta_k)$ are isomorphic by the isomorphism
\[
	\psi_k\colon \Q(\beta)\to \Q(\beta_k),\ 
	\sum_{j=0}^{d-1}a_j\beta^j \mapsto \sum_{j=0}^{d-1}a_j(\beta_k)^j.
\]
The norm $N_{\Q(\beta)/\Q}$ on the algebraic field $\Q(\beta)$ is defined by
\[
N_{\Q(\beta)/\Q}\colon \Q(\beta)\to \Q, \ x \mapsto \prod_{k=1}^d \psi_k(x).
\]
Whenever $x$ is an algebraic integer in $\Q(\beta)$, the rational $N_{\Q(\beta)/\Q}(x)$ is actually an integer.

\subsection{Words}
We make use of common notions in formal language theory. An \emph{alphabet} is a finite set of symbols, called \emph{letters}, or, in the context of numeration systems, \emph{digits}. A finite (resp.\ infinite) \emph{word} is a finite (resp.\ infinite) concatenation of letters. The length of a finite word is the number of its letters. The set of all finite words over an alphabet $A$ equipped with the binary operation of concatenation and the empty word as the neutral element is the monoid $A^*$. A subset of $A^*$ is called a \emph{language} over $A$. The set of infinite words over $A$ is denoted $A^{\N}$.

Throughout this text, the letters of a word $a$ are denoted by $a_n$ and are indexed from $0$, so that $a=a_0a_1a_2\cdots$ if $a$ is an infinite word and $a=a_0a_1a_2\cdots a_{\ell-1}$ if $a$ is a finite word of length $\ell$. If a word $a\in A^*\cup A^{\N}$ can be written as $a=uvw$ for some $u,v\in A^*$, $w\in A^*\cup A^{\N}$ then $v$ is a \emph{factor} and $u$ is a \emph{prefix} of the word $a$. The language of factors and prefixes of a given word $w$ are denoted by $\Fac(w)$ and $\Pref(w)$ respectively.

A finite word $w$ written as $k$-fold repetition of its factor $v$ is denoted $w=v^k=v\cdots v$. Similarly, infinite repetition of the factor $v$ is $v^\omega=vvv\cdots$. An infinite word $a$ of the form $a=uv^\omega$ for some $u,v\in A^*$ is called \emph{eventually periodic}. In the case where $u$ is the empty word, $a$ is said to be \emph{purely periodic}.

\subsection{Automata}

An \emph{automaton} is a  quintuplet $\mathcal{A}=(Q,I,F,A,E)$ where $Q$ is the set of \emph{states}, $I\subseteq Q$ is the set of \emph{initial states}, $F\subseteq Q$ is the set of \emph{final states}, $A$ is an alphabet, and $E\subseteq Q\times A\times Q$ is the set of \emph{transitions}. The automaton $\mathcal{A}$ is said to be \emph{deterministic} if the transition relation $E$ is actually a function from $Q\times A$ to $Q$, in which case we write $E\colon Q\times A\to Q$. The automaton $\mathcal{A}$ is said to be \emph{finite} if the set of states $Q$ is finite (since an alphabet is always considered to be finite in this text). A finite word $a_0\cdots a_{\ell-1}$ over the alphabet $A$ is \emph{accepted by $\mathcal{A}$} if it is the label of a \emph{successful path in $\mathcal{A}$}, i.e., if there exists $q_0,\ldots,q_{\ell}\in Q$ such that $q_0\in I$, $q_\ell\in F$ and for all $n\in[\![0,\ell-1]\!]$, we have $(q_n,a_n,q_{n+1})\in E$.

\emph{Büchi automata} are defined as classical automata except for the acceptance criterion which has to be adapted in order to deal with infinite words: an infinite word is accepted if it labels an initial path going infinitely many times through a final state. A major difference between Büchi and classical automata is that a set of infinite words accepted by a finite Büchi automaton is not necessarily accepted by a deterministic one. For more on Büchi automata, we refer the reader to~\cite{PerrinPin2004}.

\subsection{Spectrum and set of $\beta$-representations of zero}

For a real number $\beta>1$ and $d\in\N$, 
we let $Z(\beta,d)$ denote the set of $\beta$-representations of zero over the alphabet $[\![-d,d]\!]$:
\[
	Z(\beta,d)
	=\{a \in [\![-d,d]\!]^{\N} : 
	\sum_{n=0}^{+\infty} \frac{a_n}{\beta^{n+1}}=0\}.
\]
The \emph{$d$-spectrum of $\beta$} is the set
\[
	X^d(\beta)
	=\{\sum_{n=0}^{\ell-1}a_n\beta^{\ell-1-n} : \ell\in\N,\ a_0,a_1,\ldots,a_{\ell-1}\in [\![-d,d]\!]\}.
\]
The following theorem linking these two sets was proved in~\cite{FrougnyPelantova2018}.

\begin{theorem}[Frougny and Pelantov\'a~\cite{FrougnyPelantova2018}]
\label{Thm : SetZeroAndSpectrumBeta}
Let $\beta>1$ and $d\in \N$. Then $Z(\beta,d)$ is accepted by a finite Büchi automaton if and only if the spectrum $X^d(\beta)$ has no accumulation point in $\R$.
\end{theorem}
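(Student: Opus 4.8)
The plan is to prove the equivalence by exhibiting, for each direction, an explicit construction linking the combinatorial object $Z(\beta,d)$ to the metric structure of $X^d(\beta)$. I would first set up the natural \emph{zero automaton} $\mathcal Z(\beta,d)$ whose states are the partial sums (from the left) of $\beta$-representations of zero, i.e.\ states of the form $s=\sum_{n=0}^{k-1}a_n\beta^{k-1-n}$ with $a_i\in[\![-d,d]\!]$, with a transition $s\xrightarrow{a} \beta s + a$ for each digit $a\in[\![-d,d]\!]$, the initial state $0$, and \emph{all} states final. The key observation is that a reduced (accessible, co-accessible) version of this automaton has state set exactly $X^d(\beta)$ (or the subset of $X^d(\beta)$ from which $0$ is reachable), and that an infinite word $a_0a_1\cdots$ is in $Z(\beta,d)$ precisely when the corresponding path stays forever among states from which one can return to $0$, which by a compactness/boundedness argument will turn out to be equivalent to the path staying in a \emph{bounded} region. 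So the heart of the matter is: $Z(\beta,d)$ is Büchi-recognizable $\iff$ this state set can be taken finite $\iff$ $X^d(\beta)$ is locally finite $\iff$ $X^d(\beta)$ has no accumulation point in $\R$ (the last equivalence because $X^d(\beta)$ is a subset of $\mathbb R$ that is, as one checks, relatively dense-free only when discrete-closed — more precisely, having no accumulation point forces every bounded interval to meet $X^d(\beta)$ in a finite set).

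For the direction ``no accumulation point $\Rightarrow$ Büchi-recognizable'', I would argue as follows. If $X^d(\beta)$ has no accumulation point, then since it is a subset of $\mathbb R$, every bounded interval contains only finitely many of its elements. The tail of a $\beta$-representation of zero is bounded: if $\sum_{n\ge 0}a_n\beta^{-n-1}=0$ then for every $k$, $\bigl|\sum_{n=0}^{k-1}a_n\beta^{k-1-n}\bigr| = \bigl|\sum_{n\ge k}a_n\beta^{k-1-n}\bigr| \le \sum_{j\ge 1} d\,\beta^{-j} = \frac{d}{\beta-1}$. Hence all partial-sum states visited along a path reading a word of $Z(\beta,d)$ lie in $X^d(\beta)\cap\bigl[-\tfrac{d}{\beta-1},\tfrac{d}{\beta-1}\bigr]$, a finite set. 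I would then take the finite automaton whose states are exactly this finite set, with transitions $s\xrightarrow{a}\beta s+a$ whenever both endpoints lie in the set, initial state $0$, and all states final. One checks that an infinite word is accepted by this Büchi automaton iff all its partial sums stay bounded by $\tfrac{d}{\beta-1}$ iff the series sums to zero — the forward implication because boundedness of $\beta^{k-1}\!\cdot\!(\text{tail after position }k)$ forces the tail to tend to $0$, hence the partial sums of the original series converge to $\sum a_n\beta^{-n-1}=0$.

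For the converse, ``Büchi-recognizable $\Rightarrow$ no accumulation point'', I would argue contrapositively: suppose $X^d(\beta)$ has an accumulation point. Then one can produce, for any prescribed $\varepsilon>0$, two distinct finite words $u,v$ over $[\![-d,d]\!]$ of (say) equal length whose associated spectrum values $\mathrm{val}(u),\mathrm{val}(v)$ satisfy $0<|\mathrm{val}(u)-\mathrm{val}(v)|<\varepsilon$; equivalently, writing the difference of digit sequences, there is a nonzero finite word $w$ over $[\![-2d,2d]\!]$ with $0<|\mathrm{val}(w)|<\varepsilon$. Concatenating infinitely many such ``small'' blocks — carefully choosing $\varepsilon$'s summing appropriately and rescaling by powers of $\beta$ — one builds infinitely many pairwise distinct elements of $Z(\beta,d)$ (or of $Z(\beta,2d)$, and then a padding/splitting argument brings it back to $Z(\beta,d)$, or one works directly with the difference automaton of two copies) that ``look alike'' to any fixed finite Büchi automaton, contradicting a pumping-type argument for Büchi automata: a finite Büchi automaton recognizing $Z(\beta,d)$ has finitely many states, so only finitely many ``behaviours'' are possible, but an accumulation point yields an infinite injective family of zero-representations that are forced to be distinguished. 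The standard way to make this rigorous is to show directly that if $Z(\beta,d)$ is recognized by a finite Büchi automaton $\mathcal A$, then the set of values $\{\mathrm{val}(u) : u\in[\![-d,d]\!]^*\}=X^d(\beta)$ is, up to the finitely many states of $\mathcal A$, controlled — more concretely, one shows the set of real numbers reachable as partial sums along initial paths of (a trimmed, determinized-in-the-relevant-sense version of) $\mathcal A$ is finite, and this set contains a separated copy of $X^d(\beta)$ near any point.

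The main obstacle I anticipate is the careful bookkeeping in the converse direction: turning ``accumulation point'' into an honest \emph{infinite} antichain of zero-representations that a finite Büchi automaton provably cannot handle requires a quantitative pumping lemma for Büchi automata together with the geometric fact that closeness of spectrum values at comparable scales can be promoted, by scaling with $\beta^n$ and concatenation, to genuine zero-representations with controlled tails. Getting the alphabet bound right (staying in $[\![-d,d]\!]$ rather than drifting into $[\![-2d,2d]\!]$) and ensuring the constructed infinite words genuinely differ in a way the automaton must detect is the delicate point; everything in the forward direction is a clean boundedness-plus-local-finiteness argument. I would also need the elementary topological remark that a subset of $\mathbb R$ has no accumulation point in $\mathbb R$ if and only if it is discrete and closed, equivalently meets every bounded interval in a finite set — this is what bridges the autom-theoretic finiteness and the analytic statement.
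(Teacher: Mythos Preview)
Your forward direction (no accumulation point $\Rightarrow$ Büchi-recognizable) is correct and is exactly the zero-automaton construction the paper carries out in its generalized form (Proposition~\ref{Pro : ZeroAutoAcceptsReprOf0Complex} and the implication $(3)\Rightarrow(4)\Rightarrow(1)$ of Theorem~\ref{Thm : SetZeroSpectrumAccZeroAutomatonComplex}): take state set $X^d(\beta)\cap\bigl[-\tfrac{d}{\beta-1},\tfrac{d}{\beta-1}\bigr]$, which is finite once the spectrum has no accumulation point, and check acceptance via the boundedness computation you wrote.

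The converse direction, however, has a real gap, and it is precisely the one you flag yourself. Forming differences of two finite words with nearby spectrum values lands you in the alphabet $[\![-2d,2d]\!]$, not $[\![-d,d]\!]$, and there is no general ``padding/splitting'' that brings you back; the pumping sketch that follows is too vague to rescue this. The paper's route (the implications $(1)\Rightarrow(2)\Rightarrow(3)$ in Theorem~\ref{Thm : SetZeroSpectrumAccZeroAutomatonComplex}) avoids the difficulty entirely by going through the right congruence $\sim_{\beta,d}$ on $[\![-d,d]\!]^*$ given by $u\sim v\iff(\forall s,\ us\in Z(\beta,d)\Leftrightarrow vs\in Z(\beta,d))$. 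A finite Büchi automaton forces $\sim_{\beta,d}$ to have finitely many classes, and Lemma~\ref{Lem : ComplexEquivalenceIIFEquality} says two prefixes of $Z(\beta,d)$ are equivalent iff they correspond to the \emph{same} element of $X^d(\beta)$. The crux (Lemma~\ref{Lem : NoAccEquivalenceInfinite}) is then to manufacture, from an accumulation point, a \emph{single} word $a\in Z(\beta,d)$ whose prefixes are pairwise non-equivalent: pick an injective convergent sequence $(x^{(j)})$ in $X^d(\beta)$, choose for each a representation over $[\![-d,d]\!]$ of \emph{minimal} length $\rho(j)$, note $\rho(j)\to\infty$ so $x^{(j)}/\beta^{\rho(j)}\to 0$, and extract $a$ by a König-type diagonal argument along the digit sequences. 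If two prefixes of $a$ of lengths $k>\ell$ were equivalent they would share the same spectrum value, and substituting the short prefix for the long one inside some $x^{(j)}$ would yield a strictly shorter representation, contradicting minimality of $\rho(j)$. This argument never leaves the alphabet $[\![-d,d]\!]$ and delivers infinitely many $\sim_{\beta,d}$-classes directly.
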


The next result is due to Akiyama and Komornik~\cite{AkiyamaKomornik2013} and Feng~\cite{Feng2016}.

\begin{theorem}[Akiyama and Komornik~\cite{AkiyamaKomornik2013}, Feng~\cite{Feng2016}]
\label{Thm : AccPointAkiyamaKomornikFeng}
Let $\beta>1$ and $d\in\N$.
The spectrum $X^d(\beta)$ has an accumulation point in $\R$ if and only if $\beta-1<d$ and $\beta$ is not a Pisot number.
\end{theorem}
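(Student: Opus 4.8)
By contraposition the statement is equivalent to: \(X^d(\beta)\) has no accumulation point in \(\R\) if and only if \(d\le\beta-1\) or \(\beta\) is a Pisot number. I would organise the argument around this reformulation. Note first that \(d=\beta-1\) forces \(\beta=d+1\in\N\), which is Pisot; so for the ``no accumulation point'' direction only two cases need a separate treatment, namely \(d<\beta-1\), and ``\(\beta\) Pisot, \(d\) arbitrary''.

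For the case \(d<\beta-1\), I would prove local finiteness directly. If \(y\in X^d(\beta)\) with \(y>0\), fix a representation \(y=\sum_{i=0}^{n}c_i\beta^i\) with \(c_i\in[\![-d,d]\!]\) and \(c_n\ne0\); the sign \(c_n<0\) would give \(y\le-\beta^n+d\tfrac{\beta^n-1}{\beta-1}<0\) (here \(d<\beta-1\) is used), so \(c_n\ge1\) and then \(y\ge\beta^n-d\tfrac{\beta^n-1}{\beta-1}\ge\beta^n\bigl(1-\tfrac{d}{\beta-1}\bigr)\) with \(1-\tfrac{d}{\beta-1}>0\). Thus any element of \(X^d(\beta)\cap(0,R]\) is represented by a polynomial of degree \(\le\log_\beta\!\bigl(R(1-\tfrac{d}{\beta-1})^{-1}\bigr)\), of which there are finitely many, and likewise on \([-R,0)\); hence \(X^d(\beta)\cap[-R,R]\) is finite. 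For the case \(\beta\) Pisot, of degree \(e\) with conjugates \(\beta=\beta_1,\dots,\beta_e\) (so \(|\beta_k|<1\) for \(k\ge2\)) and isomorphisms \(\psi_k\colon\Q(\beta)\to\Q(\beta_k)\) as in Section~\ref{Sec : Preliminaries}, a difference of two distinct elements of \(X^d(\beta)\) is a nonzero \(z=\sum_{i=0}^{n}c_i\beta^i\) with \(c_i\in[\![-2d,2d]\!]\); since \(z\in\Z[\beta]\) is a nonzero algebraic integer, \(N_{\Q(\beta)/\Q}(z)=z\prod_{k=2}^{e}\psi_k(z)\) is a nonzero rational integer, so \(|z|\ge\prod_{k=2}^{e}|\psi_k(z)|^{-1}\ge\prod_{k=2}^{e}\tfrac{1-|\beta_k|}{2d}>0\), using \(|\psi_k(z)|\le\tfrac{2d}{1-|\beta_k|}\). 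This bound is uniform, so \(X^d(\beta)\) is uniformly discrete. This settles the ``no accumulation point'' direction.

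The converse --- \(\beta-1<d\) and \(\beta\) not Pisot imply \(X^d(\beta)\) has an accumulation point --- is the substantial part. I would show that \(0\) is an accumulation point, i.e.\ \(\inf\{|y|:y\in X^d(\beta),\ y\ne0\}=0\), which suffices since \(0\in X^d(\beta)\). If \(\beta\) is transcendental this follows from a pigeonhole argument using only \(\beta<d+1\): among the \((d+1)^{n+1}\) numbers \(\sum_{i=0}^{n}c_i\beta^{-i}\), \(c_i\in[\![0,d]\!]\), lying in an interval of length \(<\tfrac{d\beta}{\beta-1}\), two distinct digit tuples differ by less than \(\tfrac{d\beta}{\beta-1}(d+1)^{-n}\); the difference is \(\sum_{i=0}^{n}e_i\beta^{-i}\) with \(e_i\in[\![-d,d]\!]\), nonzero by transcendence, and multiplying by \(\beta^n\) produces an element of \(X^d(\beta)\) of modulus \(<\tfrac{d\beta}{\beta-1}\bigl(\tfrac{\beta}{d+1}\bigr)^n\to0\). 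If \(\beta\) is algebraic one separates two subcases. If \(\beta\) is not an algebraic integer, one forces small nonzero values \(\sum_i c_i\beta^i\) from the denominators --- for \(\beta=a/b\) in lowest terms this amounts to realising integers small relative to \(b^n\) as \(\sum_{i=0}^{n}c_ia^ib^{n-i}\) with \(c_i\in[\![-d,d]\!]\), where the redundancy \(a<(d+1)b\) is essential. If \(\beta\) is an algebraic integer that is not Pisot, it has a Galois conjugate \(\gamma\ne\beta\) with \(|\gamma|\ge1\), which one exploits to manufacture the cancellations yielding a null sequence of spectrum elements.

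The main obstacle is this converse, and within it the delicate point is the passage from a small element of \(\Z[\beta]\) (or a small value \(\sum_i c_ia^ib^{n-i}\)) to a small element of the \emph{spectrum}, i.e.\ keeping every digit in \([\![-d,d]\!]\) while the real value stays near \(0\), under the sole hypothesis \(\beta-1<d\). This is exactly where non-Pisotness is used: for Pisot \(\beta\) the construction must and does fail, since \(|N(\cdot)|\ge1\) together with the bounded conjugates forbids the value from being small; this is the content of the theorems of Akiyama--Komornik and Feng, the Salem case (all conjugates in the closed unit disc, with one on the boundary) being the hardest instance and the one settled by Feng's finer analysis. A secondary point, should one prefer to route part of the proof through uniform discreteness or Meyer sets --- note that \(\beta X^d(\beta)\subseteq X^d(\beta)\) --- is the nontrivial fact that for these self-similar sets absence of accumulation points already implies uniform discreteness.
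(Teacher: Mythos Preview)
The paper does not prove this statement: it is quoted in the preliminaries as a result of Akiyama--Komornik and Feng, with no argument given. So there is no ``paper's own proof'' to compare against; the theorem is used as a black box (only in Lemma~\ref{Lem : ProductNotPisotSpectrumAcc}).

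Your sketch is a fair outline of how the cited works proceed. The two easy halves --- local finiteness when \(d<\beta-1\) via the degree bound, and uniform discreteness when \(\beta\) is Pisot via the norm argument --- are correct as written. For the converse you correctly separate the transcendental case (pigeonhole), the algebraic non-integer case, and the algebraic-integer non-Pisot case, and you rightly flag the Salem subcase as the genuine difficulty resolved by Feng. Two small remarks: in the pigeonhole step you should note that the two colliding digit strings give a \emph{nonzero} polynomial value precisely because \(\beta\) is transcendental, which you do; and your treatment of the algebraic non-integer case is only a hint (``forces small nonzero values from the denominators''), not an argument --- the actual construction there still requires care to keep all digits in \([\![-d,d]\!]\). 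But since the paper itself offers no proof, your proposal is strictly more than what the paper contains; just be aware that a complete self-contained proof of the converse (especially the Salem case) is substantially longer than your sketch suggests.
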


The following result is a direct consequence of  Theorems~\ref{Thm : SetZeroAndSpectrumBeta} and~\ref{Thm : AccPointAkiyamaKomornikFeng}, as noticed in~\cite{FrougnyPelantova2018}.

\begin{theorem}
\label{Thm : EquivalencesZeroReprFrougnyPelantova}
Let $\beta>1$. The following assertions are equivalent.
\begin{enumerate}
\item The set $Z(\beta,d)$ is accepted by a finite Büchi automaton for all $d\ge 0$.
\item The set $Z(\beta,d)$ is accepted by a finite Büchi automaton for one $d\ge \ceil{\beta}-1$.
\item $\beta$ is a Pisot number.
\end{enumerate}
\end{theorem}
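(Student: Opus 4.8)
The plan is to obtain the three equivalences by simply chaining Theorems~\ref{Thm : SetZeroAndSpectrumBeta} and~\ref{Thm : AccPointAkiyamaKomornikFeng}. Putting them together, for any fixed real $\beta>1$ and any $d\in\N$, the set $Z(\beta,d)$ is accepted by a finite Büchi automaton if and only if the spectrum $X^d(\beta)$ has no accumulation point in $\R$, which in turn holds if and only if it is \emph{not} the case that both $\beta-1<d$ and $\beta$ is not a Pisot number; that is, if and only if $\beta-1\ge d$ or $\beta$ is a Pisot number. I would record this reformulation first, as each of the implications below then reduces to a short logical manipulation.

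For the implication $(3)\Rightarrow(1)$: if $\beta$ is a Pisot number then the disjunction ``$\beta-1\ge d$ or $\beta$ is Pisot'' trivially holds for every $d\ge0$, so by the reformulation $Z(\beta,d)$ is accepted by a finite Büchi automaton for all $d\ge0$. The implication $(1)\Rightarrow(2)$ is immediate: since $\beta>1$ we have $\ceil{\beta}-1\ge1\ge0$, so $d=\ceil{\beta}-1$ is an admissible value and $(1)$ applies to it.

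Only $(2)\Rightarrow(3)$ requires a small computation. Suppose $Z(\beta,d)$ is accepted by a finite Büchi automaton for some $d\ge\ceil{\beta}-1$, and assume towards a contradiction that $\beta$ is not a Pisot number. Since every integer is a Pisot number, $\beta$ is not an integer, hence $\ceil{\beta}>\beta$ and therefore $d\ge\ceil{\beta}-1>\beta-1$. So both $\beta-1<d$ and $\beta$ is not Pisot, whence Theorem~\ref{Thm : AccPointAkiyamaKomornikFeng} gives that $X^d(\beta)$ has an accumulation point in $\R$, and then Theorem~\ref{Thm : SetZeroAndSpectrumBeta} gives that $Z(\beta,d)$ is \emph{not} accepted by a finite Büchi automaton, contradicting the hypothesis. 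Hence $\beta$ is a Pisot number. I do not expect any genuine obstacle here, since once the two quoted theorems are in hand the argument is purely formal; the only point to be careful about is that integers are Pisot numbers, so that the threshold $d\ge\ceil{\beta}-1$ in $(2)$ forces $\beta-1<d$ precisely when $\beta$ fails to be Pisot.
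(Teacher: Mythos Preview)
Your proof is correct and follows exactly the approach the paper indicates: the paper does not give a detailed argument but simply states that the result is a direct consequence of Theorems~\ref{Thm : SetZeroAndSpectrumBeta} and~\ref{Thm : AccPointAkiyamaKomornikFeng}, and you have spelled out precisely that deduction. The only substantive point, namely that a non-Pisot $\beta$ is necessarily non-integer so that $d\ge\ceil{\beta}-1>\beta-1$, is handled correctly.
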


\subsection{Alternate bases}

Let $p$ be a positive integer and $\B=(\beta_0,\ldots,\beta_{p-1})$ be a $p$-tuple of real numbers greater than $1$. Such a $p$-tuple $\B$ is called an \emph{alternate base} and $p$ is called its \emph{length}. A \emph{$\B$-representation} of a real number $x$ is an infinite sequence $a=a_0a_1a_2\cdots$ of integers such that 
\begin{equation}
\label{Eq : valueAlternatBase}
	x=\sum_{m=0}^{+\infty} \sum_{i=0}^{p-1} \frac{a_{mp+i}}{(\beta_0\cdots\beta_{p-1})^m\beta_0\cdots\beta_i}.
\end{equation}
We use the convention that for all $n\in \Z$, $\beta_n=\beta_{n \bmod p}$ and 
\[\B^{(n)}=(\beta_n,\ldots,\beta_{n+p-1}).\]
Therefore, the equality~\eqref{Eq : valueAlternatBase} can be rewritten as
\[
	x=\sum_{n=0}^{+\infty}\frac{a_n}{\prod_{k=0}^n\beta_k}.
\]
In order to simplify notation, we sometimes write $\val_{\B}(a)$ in order to designate the second term of the previous equality. 

For $x\in[0,1]$, a distinguished $\B$-representation $\varepsilon_0(x)\varepsilon_1(x)\varepsilon_2(x)\cdots$, called the \emph{$\B$-expansion} of $x$, is obtained from the \emph{greedy algorithm}:
\begin{itemize}
\item $\varepsilon_0(x)=\floor{\beta_0 x}$ and $r_0(x)=\beta_0 x-\varepsilon_0(x)$
\item $\varepsilon_n(x)=\floor{\beta_n r_{n-1}(x)}$ and $r_n=\beta_n r_{n-1}(x)-\varepsilon_n(x)$ for $n\in\N_{\ge 1}$.
\end{itemize}
The $\B$-expansion of $x$ is denoted $\DB(x)$. Clearly, the $n$-th digit $\varepsilon_n(x)$ takes value in $\{0,1,\dots,\lfloor\beta_n\rfloor\}$. The numbers $r_n(x)$ computed by the greedy algorithm are called \emph{remainders}. The \emph{quasi-greedy $\B$-expansion of $1$}, denoted $d_{\B}^*(1)$, is the $\B$-representation of $1$ given by $\lim_{x\to 1}\DB(x)$ (where the limit is taken with respect to the product topology on infinite words).

The following theorem characterizing the admissible sequences is an analogue of Parry's theorem~\cite{Parry1960}.

\begin{theorem}[Charlier and Cisternino \cite{CharlierCisternino2021}]
\label{thm:Parry}
An infinite word $a$ over $\N$ is the $\B$-expansion of some real number in the interval $[0,1)$ if and only if $a_na_{n+1}\cdots<_{\lex} \DBi{n}^*(1)$ for all $n\in\N$.
\end{theorem}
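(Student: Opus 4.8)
The plan is to prove the two implications separately, both organized around one structural observation. If $a=\DB(x)$ with $x\in[0,1)$, then unrolling the greedy recursion shows that for every $n\ge0$ one has
\[
a_na_{n+1}\cdots=\DBi{n}\bigl(r_{n-1}(x)\bigr)\qquad\text{and}\qquad\val_{\B^{(n)}}(a_na_{n+1}\cdots)=r_{n-1}(x)\in[0,1),
\]
with the convention $r_{-1}(x)=x$; here convergence of the tail series and its equality with $r_{n-1}(x)$ follow from $0\le r_m(x)<1$ together with $\prod_k\beta_k=+\infty$. Conversely, if $a$ is a word over $\N$ for which $\val_{\B^{(n)}}(a_na_{n+1}\cdots)$ converges and lies in $[0,1)$ for all $n$, then running the greedy algorithm on $x:=\val_{\B}(a)$ returns $a$ digit by digit and $x\in[0,1)$. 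Thus the theorem reduces to the equivalence, among words $a$ for which $\val_{\B^{(n)}}(a_na_{n+1}\cdots)$ converges for all $n$, between the conditions ``$a_na_{n+1}\cdots<_{\lex}\qDBi{n}(1)$ for all $n$'' and ``$\val_{\B^{(n)}}(a_na_{n+1}\cdots)\in[0,1)$ for all $n$''.

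For the necessity of the lexicographic condition, I fix $n$ and, setting $y=r_{n-1}(x)\in[0,1)$, I reduce by the observation above to the following statement about an arbitrary alternate base: the greedy expansion of any number in $[0,1)$ is strictly smaller, for $\le_{\lex}$, than the quasi-greedy expansion of $1$. The greedy-expansion map is nondecreasing for $\le_{\lex}$ and injective (the valuation is a left inverse of it), both being part of the basic combinatorial theory developed in~\cite{CharlierCisternino2021}; hence, choosing $z$ with $y<z<1$, the greedy expansion of $y$ is $<_{\lex}$ that of $z$, while the quasi-greedy expansion of $1$ --- being the product-topology limit of the $\le_{\lex}$-nondecreasing family of greedy expansions of numbers tending to $1$ from below --- is $\ge_{\lex}$ the greedy expansion of $z$; combining these gives the statement. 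As a by-product, applying it with base $\B^{(n+1)}$ to the number $\beta_n x-\floor{\beta_n x}$ for $x<1$ and letting $x\to1^-$ yields the ``shift relation'': deleting the first digit of $\qDBi{n}(1)$ produces a word that is $\le_{\lex}\qDBi{n+1}(1)$.

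For sufficiency, assume $a_na_{n+1}\cdots<_{\lex}\qDBi{n}(1)$ for all $n\ge0$. Comparing leading digits first gives $a_n\le\bigl(\qDBi{n}(1)\bigr)_0=\ceil{\beta_n}-1\le\floor{\beta_n}$, so the series $\val_{\B^{(n)}}(a_na_{n+1}\cdots)$ converges and is $\ge0$ for all $n$. By the reduction above it remains to prove that $\val_{\B^{(n)}}(a_na_{n+1}\cdots)<1$ for all $n$; after reindexing, it is enough to prove that $\val_{\B}(a)<1$ whenever $a$ satisfies the lexicographic condition for $\B$ at every level. I would establish this by adapting Parry's original argument: writing $t=\qDB(1)$, so that $\val_{\B}(t)=1$, and letting $\ell$ be the first index with $a_\ell<t_\ell$, one has
\[
\val_{\B}(a)-1=\Bigl({\textstyle\prod_{j=0}^{\ell-1}\beta_j^{-1}}\Bigr)\bigl(\val_{\B^{(\ell)}}(a_\ell a_{\ell+1}\cdots)-\val_{\B^{(\ell)}}(t_\ell t_{\ell+1}\cdots)\bigr),
\]
and one bounds the right-hand side by iterating this comparison of two words whose leading digits strictly decrease at each stage, while tracking the cyclically shifting bases $\B^{(\ell)},\B^{(\ell+1)},\dots$ and invoking the shift relation of the previous paragraph; the strictness $a<_{\lex}t$ then upgrades the resulting bound $\val_{\B}(a)\le1$ to $\val_{\B}(a)<1$.

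I expect the main obstacle to be precisely this last lemma, that admissibility forces $\val_{\B^{(n)}}(a_na_{n+1}\cdots)<1$. Already in the one-base case this estimate is delicate: a word may use its maximal leading digit and then be constrained by the quasi-greedy expansion of $1$ only much later, so the comparison has to be organized as a simultaneous induction rather than a single step; in the alternate-base setting this bookkeeping must be carried out while the base cycles through $\B^{(0)},\B^{(1)},\dots$, which is the genuine technical content of the theorem. The remaining ingredients --- the greedy-recursion identities, the digit bound $a_n\le\floor{\beta_n}$, monotonicity and injectivity of the greedy-expansion map, and the characterization of $\qDB(1)$ as a limit of greedy expansions --- are routine or already available from~\cite{CharlierCisternino2021}.
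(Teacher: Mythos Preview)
The paper does not prove this theorem: it is quoted in the preliminaries section as a result of Charlier and Cisternino~\cite{CharlierCisternino2021}, with no argument given. There is therefore no ``paper's own proof'' to compare your attempt against.

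That said, your outline follows the natural adaptation of Parry's classical argument to the cyclically shifting bases, and the reduction you describe --- that admissibility is equivalent to $\val_{\B^{(n)}}(a_na_{n+1}\cdots)\in[0,1)$ for all $n$, via the greedy recursion --- is the right structural observation. Your necessity argument is sound: monotonicity and injectivity of $x\mapsto\DB(x)$ together with the definition of $\qDB(1)$ as a product-topology limit of a $\le_{\lex}$-increasing net do give $\DB(y)<_{\lex}\qDB(1)$ for $y<1$. You are also correct that the substantive work lies in the sufficiency direction, and your sketch there is honest but incomplete: the iterated comparison you describe needs to be organised so that the successive ``first discrepancy'' indices $\ell_0<\ell_1<\cdots$ produce a telescoping bound with geometrically decaying weights $\prod_{j<\ell_k}\beta_j^{-1}$, and one must argue separately that the strict inequality $a<_{\lex}\qDB(1)$ actually yields $\val_{\B}(a)<1$ rather than merely $\le1$ (in the one-base case this uses that $\qDB(1)$ is the lexicographically largest representation of $1$ with all tails bounded by itself; the alternate-base analogue requires the shift relation you derived). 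None of this is wrong, but as written it is a plan rather than a proof; the details are genuinely what~\cite{CharlierCisternino2021} supplies.
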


The $\B$-shift associated with an alternate base $\B$ is defined as the topological closure (with respect to the product topology of infinite words) of the union $\bigcup_{i=0}^{p-1}\{\DBi{i}(x) \colon x\in [0,1)\}$. Generalizing a celebrated result of Bertrand-Mathis for real bases \cite{Bertrand-Mathis1986}, Theorem~\ref{thm:Parry} was used in \cite{CharlierCisternino2021} for showing that the $\B$-shift is sofic if and only if $\qDBi{i}(1)$ is eventually periodic for all $i\in \Int$. We refer to such bases as the \emph{Parry alternate bases}.

\section{Spectrum and representations of zero in complex bases}
\label{Sec : SpectrumComplex}

In this section, we generalize Theorem~\ref{Thm : SetZeroAndSpectrumBeta} to the context of complex bases and general alphabets of complex digits. Here, a complex base is a complex number $\delta$ such that $|\delta|>1$. For a complex base $\delta$ and an alphabet $A$ of complex numbers, we define the set of $\delta$-representations of zero over $A$ by 
\[
	Z(\delta,A)=
	\{a\in A^{\N} : \sum_{n=0}^{+\infty} \frac{a_n}{\delta^{n+1}}=0\}
\]
and the \emph{spectrum of $\delta$ over the alphabet $A$} by
\begin{equation}
\label{eq:spectrum}
	X^A(\delta)
	=\{\sum_{n=0}^{\ell-1}a_n\delta^{\ell-1-n} : \ell\in\N,\ a_0,a_1,\ldots,a_{\ell-1}\in A\}.
\end{equation}
We say that a word $a_0\cdots a_{\ell-1}$ over $A$ \emph{corresponds} to the element $\sum_{i=0}^{\ell-1}a_i\delta^{\ell-1-i}$ in the spectrum $X^A(\delta)$. For the remaining part of this section, we consider a fixed complex base $\delta$ and a fixed alphabet $A\subseteq\C$. 

We also define a relation $\sim_{\delta,A}$ over $A^*$: for $a,b \in A^*$, we set $a\sim_{\delta,A} b$ whenever for all $s\in A^{\N}$, we have $as\in Z(\delta,A) \iff bs\in Z(\delta,A)$. Obviously, this is a right congruence over $A^*$ and $A^*\setminus \Pref(Z(\delta,A))$ is one of its equivalence classes. In the context of real bases $\beta$ and integer digits, this right congruence may be interpreted in terms of the remainders of the Euclidean division of polynomials in $\Z[x]$ by $x-\beta$; see~\cite{Frougny1992}. This interpretation is no longer possible in the present context of complex digits.

\begin{lemma}
\label{Lem : ComplexEquivalenceIIFEquality}
Let $a,b\in \Pref(Z(\delta,A))$ be such that $|a|=k$ and $|b|=\ell$. We have $a\sim_{\delta,A} b$ if and only if 
\[
	\sum_{n=0}^{k-1}a_n\delta^{k-1-n}
	=\sum_{n=0}^{\ell-1}b_n\delta^{\ell-1-n},
\]
that is, the words $a$ and $b$ correspond to the same element in the spectrum $X^A(\delta)$.
\end{lemma}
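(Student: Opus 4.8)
The statement is a biconditional, and the natural approach is to prove each direction separately, using the definition of the spectrum and the value map $\val_\delta$ implicit in the definition of $Z(\delta,A)$. Throughout, note that for a finite word $a = a_0\cdots a_{k-1}$, the quantity $\sum_{n=0}^{k-1} a_n\delta^{k-1-n}$ — the spectrum element corresponding to $a$ — equals $\delta^k \cdot \sum_{n=0}^{k-1} a_n\delta^{-(n+1)}$. So if we write $\pi(a) := \sum_{n=0}^{k-1} a_n\delta^{-(n+1)}$ for the "partial sum" of $a$, the spectrum element corresponding to $a$ is $\delta^k\pi(a)$, and for an infinite word $s = s_0s_1\cdots$ we have $as \in Z(\delta,A)$ if and only if $\pi(a) + \delta^{-k}\val_\delta(s) = 0$, i.e. $\val_\delta(s) = -\delta^k\pi(a) = -(\text{spectrum element of }a)$. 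This reformulation is the engine of the proof.

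For the ``if'' direction: suppose $a,b \in \Pref(Z(\delta,A))$ correspond to the same spectrum element, say $\delta^k\pi(a) = \delta^\ell\pi(b) =: c$. Take any $s \in A^{\N}$. By the reformulation above, $as \in Z(\delta,A) \iff \val_\delta(s) = -c \iff bs \in Z(\delta,A)$, which is exactly $a \sim_{\delta,A} b$. (Here I only needed that $a,b$ are prefixes of some element of $Z(\delta,A)$ to ensure the relevant equivalence is nontrivially witnessed, but in fact the computation goes through regardless.)

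For the ``only if'' direction: suppose $a \sim_{\delta,A} b$. Since $a \in \Pref(Z(\delta,A))$, there exists $s \in A^{\N}$ with $as \in Z(\delta,A)$; by the congruence, $bs \in Z(\delta,A)$ as well. Applying the reformulation to both $as$ and $bs$ gives $\val_\delta(s) = -\delta^k\pi(a)$ and $\val_\delta(s) = -\delta^\ell\pi(b)$, hence $\delta^k\pi(a) = \delta^\ell\pi(b)$, i.e. $a$ and $b$ correspond to the same element of the spectrum. This uses crucially the hypothesis $a \in \Pref(Z(\delta,A))$ (equivalently $b$, by the congruence, since $a\sim_{\delta,A} b$ and neither is in the ``dead'' class $A^*\setminus\Pref(Z(\delta,A))$ — or rather, if one were, so would the other be, contradicting $a\in\Pref(Z(\delta,A))$), which guarantees the witnessing suffix $s$ exists.

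The main (and really only) subtlety is making sure the existence of the witnessing suffix $s$ in the ``only if'' direction is legitimate: this is precisely why the hypothesis $a, b \in \Pref(Z(\delta,A))$ appears in the statement, and one should remark that under $a\sim_{\delta,A} b$ the two membership conditions $a\in\Pref(Z(\delta,A))$ and $b\in\Pref(Z(\delta,A))$ are in fact equivalent, so assuming it for $a$ suffices. Everything else is the short algebraic manipulation with $\val_\delta$ and the geometric-series-style factorization $\val_\delta(as) = \pi(a) + \delta^{-|a|}\val_\delta(s)$, which I would state once and reuse. No convergence issue arises beyond knowing $\val_\delta(s)$ is defined for $s\in Z$-relevant suffixes, which it is by hypothesis.
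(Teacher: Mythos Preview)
Your proposal is correct and follows essentially the same approach as the paper: both directions hinge on the identity $\val_\delta(as)=\delta^{-k}\big(\sum_{n=0}^{k-1}a_n\delta^{k-1-n}+\val_\delta(s)\big)$ and the existence of a witnessing suffix $s$ guaranteed by the prefix hypothesis. The only cosmetic difference is that the paper proves the ``if'' direction by contrapositive (assuming $a\not\sim_{\delta,A}b$ and producing an $s$ with $as\in Z(\delta,A)$, $bs\notin Z(\delta,A)$), whereas you prove it directly.
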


\begin{proof}
Suppose that $a\sim_{\delta,A} b$. Since $a$ and $b$ belong to the set $\Pref(Z(\delta,A))$, there exists $s\in A^{\N}$ such that $as,bs\in Z(\delta,A)$. We get 
\[
	-\sum_{n=0}^{+\infty} \frac{s_n}{\delta^{n+1}}
	=\sum_{n=0}^{k-1}a_n\delta^{k-1-n}
	=\sum_{n=0}^{\ell-1}b_n\delta^{\ell-1-n}.
\]
Conversely, suppose that $a\not\sim_{\delta,A} b$. Without loss of generality we can suppose that $as\in Z(\delta,A)$ and $bs\notin Z(\delta,A)$ for some $s\in A^{\N}$. Then
\[
	-\sum_{n=0}^{+\infty} \frac{s_n}{\delta^{n+1}}
	=\sum_{n=0}^{k-1}a_n\delta^{k-1-n}
	\ne \sum_{n=0}^{\ell-1}b_n\delta^{\ell-1-n}. 
\]
\end{proof}

\begin{lemma}
\label{Lem : NoAccEquivalenceInfinite}
If the spectrum $X^A(\delta)$ has an accumulation point in $\C$ then there exists an infinite word in $Z(\delta,A)$ with pairwise non equivalent prefixes with respect to the right congruence $\sim_{\delta,A}$. In particular, the right congruence $\sim_{\delta,A}$ has infinitely many classes.
\end{lemma}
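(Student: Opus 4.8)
The plan is to reduce the statement, via Lemma~\ref{Lem : ComplexEquivalenceIIFEquality}, to the construction of a single infinite word $w\in Z(\delta,A)$ whose prefixes all have pairwise distinct values in the spectrum $X^A(\delta)$. Indeed, the prefixes of such a $w$ lie in $\Pref(Z(\delta,A))$, so Lemma~\ref{Lem : ComplexEquivalenceIIFEquality} turns the distinctness of their spectrum values into pairwise non-equivalence for $\sim_{\delta,A}$, and, a single infinite word having infinitely many prefixes, this also yields the last assertion. For a finite word $u=u_0\cdots u_{\ell-1}$ I write $P(u)=\sum_{n=0}^{\ell-1}u_n\delta^{\ell-1-n}$ for its spectrum value, so that $P(u_0\cdots u_k)=\delta P(u_0\cdots u_{k-1})+u_k$, and set $M=\frac{\max_{a\in A}|a|}{|\delta|-1}$. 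I will use two elementary facts, both obtained by iterating this relation. (a) An infinite word $a$ belongs to $Z(\delta,A)$ if and only if the values $P(a_0\cdots a_{k-1})$ of its prefixes are bounded, and then they all lie in the closed disc $\overline{D}(0,M)$; this comes from the identity expressing the $k$-th partial sum of the defining series as $\delta^{-k}P(a_0\cdots a_{k-1})$ together with the uniform bound $|\sum_{m\ge0}a_{k+m}\delta^{-m-1}|\le M$ on its tails. (b) If $z\in X^A(\delta)$ with $|z|\le M+1$, then \emph{every} word $u$ realizing $z$ has all of its prefixes realizing points of $\overline{D}(0,M+1)$; this is read off the backward relation $P(u_0\cdots u_{k-1})=\big(P(u_0\cdots u_k)-u_k\big)/\delta$, which sends a value of modulus at most $M+1$ to one of modulus at most $M+1/|\delta|\le M+1$.

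The core step would be the claim that if $X^A(\delta)$ has an accumulation point then $X^A(\delta)\cap\overline{D}(0,M+1)$ is infinite. Here I would take pairwise distinct spectrum elements converging to an accumulation point $x^*$; since $A$ is finite the lengths of the corresponding words are unbounded, so after passing to a subsequence they tend to infinity. By the pigeonhole principle, infinitely many of those words share their last letter $c$; deleting it replaces each value $v$ by $(v-c)/\delta$, which still belongs to $X^A(\delta)$ and keeps the values pairwise distinct, and replaces $x^*$ by an accumulation point of $X^A(\delta)$ of modulus at most $(|x^*|+\max_{a\in A}|a|)/|\delta|$. Iterating this peeling and using that $r\mapsto(r+\max_{a\in A}|a|)/|\delta|$ is a contraction of $[0,\infty)$ whose fixed point is $M$, one obtains after finitely many steps an accumulation point of $X^A(\delta)$ of modulus strictly less than $M+1$; hence $X^A(\delta)$ has infinitely many elements in $\overline{D}(0,M+1)$. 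This peeling procedure — and in particular the bookkeeping ensuring that the accumulation point really lands inside a ball of radius controlled by $M$ — is the only step I expect to require genuine care; the rest is routine.

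For the final step I would form the directed graph $G$ with vertex set $X^A(\delta)\cap\overline{D}(0,M+1)$ and an edge from $y$ to $\delta y+a$, labelled $a$, for each $a\in A$ with $\delta y+a\in\overline{D}(0,M+1)$. By fact (b) every vertex is reachable from $0$ along a directed walk (follow the prefixes of any word realizing it), every out-degree is at most $|A|$, and by the claim the vertex set is infinite. Since the set of vertices at graph-distance at most $r$ from $0$ has cardinality at most $1+|A|+\cdots+|A|^r$, there are vertices arbitrarily far from $0$, hence simple directed paths from $0$ of arbitrary length; the tree of finite simple directed paths from $0$ is thus infinite and finitely branching, so König's lemma yields an infinite branch, that is, an infinite simple path $0=y_0,y_1,y_2,\dots$ with successive edge labels $a_0,a_1,\dots$. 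The infinite word $w=a_0a_1a_2\cdots$ then satisfies $P(w_0\cdots w_{k-1})=y_k$ for all $k\ge0$, so its prefix values are pairwise distinct and bounded; by fact (a) we get $w\in Z(\delta,A)$, and by Lemma~\ref{Lem : ComplexEquivalenceIIFEquality} the prefixes of $w$ are pairwise non-equivalent for $\sim_{\delta,A}$, which completes the proof.
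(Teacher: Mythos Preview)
Your proof is correct and takes a genuinely different route from the paper's. The paper chooses, for each element $x^{(j)}$ of a convergent injective sequence in $X^A(\delta)$, a representation of \emph{minimal} length $\rho(j)$; a diagonal pigeonhole extraction on successive first letters then yields an infinite word $a\in Z(\delta,A)$, and the minimality of $\rho(j)$ is what forces pairwise non-equivalence of the prefixes of $a$ (an equivalence between two prefixes would allow one to shorten some representation $x^{(j)}_0\cdots x^{(j)}_{\rho(j)-1}$, a contradiction). By contrast, you first \emph{localize} the accumulation point into $\overline{D}(0,M+1)$ by iteratively peeling last letters, and then run König's lemma on the directed graph of spectrum values inside that disc to extract an infinite simple path from $0$. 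Your approach cleanly decouples two ideas---localization and path extraction---and makes the role of Lemma~\ref{Lem : ComplexEquivalenceIIFEquality} completely transparent (distinct prefix values directly give non-equivalence). The paper's approach is shorter and sidesteps the localization step entirely, since the normalization $x^{(j)}/\delta^{\rho(j)}\to 0$ works for any accumulation point; the price is the slightly slicker minimality argument to obtain non-equivalence of prefixes.
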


\begin{proof}
Suppose that the spectrum $X^A(\delta)$ has a complex accumulation point. Then there exists an injective sequence $(x^{(j)})_{j\in \N}$ in $X^A(\delta)$ such that $\lim_{j\to +\infty} x^{(j)}$ is finite. For each $j\in\N$ we let $\rho(j)$ denote the minimal exponent such that there exists a representation of $x^{(j)}$ in the form $x^{(j)}_0\cdots x^{(j)}_{\rho(j)-1}\in A^*$, that is 
\[
	x^{(j)}
	=\sum_{n=0}^{\rho(j)-1} x_n^{(j)}\delta^{\rho(j)-1-n}.
\]
Obviously, the sequence $(\rho(j))_{j\in \N}$ is unbounded, and  without loss of generality we can assume that $(\rho(j))_{j\in \N}$ is strictly increasing. Thus $\lim_{j\to +\infty}\rho(j)=+\infty$ and we get
\begin{equation}
\label{Eq : Lim}
	\lim_{j\to +\infty} \frac{x^{(j)}}{\delta^{\rho(j)}} 
	=\lim_{j\to +\infty} \sum_{n=0}^{\rho(j)-1}\frac{x_n^{(j)}}{\delta^{n+1}} 
	=0.
  \end{equation}
With this, we will show the existence of the desired $\delta$-representation $a$ of zero over $A$. Set $a_0$ as a digit in $A$ which occurs infinitely many times among $x_0^{(j)}$ with $j\in\N$.
Inductively, for $n\geq 1$, set $a_n$ as a digit in $A$ which occurs infinitely many times among $x_n^{(j)}$, where $j\in\N$ runs through the indices such that $x_0^{(j)}\cdots x_{n-1}^{(j)}=a_0\cdots a_{n-1}$. By~\eqref{Eq : Lim}, we get that
\[
	\sum_{n=0}^{+\infty}\frac{a_n}{\delta^{n+1}}=0,
\] 
that is, that $a=a_0a_1a_2\cdots$ belongs to the set $Z(\delta ,A)$.
  
We will show that no pair of distinct prefixes of the infinite word $a$ belong to the same equivalence class. To show this by contradiction, we consider $k,\ell\in \N$ such that $a_0\cdots a_{k-1}\sim_{\delta,A} a_0\cdots a_{\ell-1}$ with $k>\ell$. By construction, there exists $j\in \N$ such that $a_0\cdots a_{k-1}$ is a prefix of $x^{(j)}_0\cdots x^{(j)}_{\rho(j)-1}$. Moreover, by Lemma~\ref{Lem : ComplexEquivalenceIIFEquality}, we get
\begin{align*}
	x^{(j)}
	&=\sum_{n=0}^{k-1} a_n\delta^{\rho(j)-1-n} 
	+\sum_{n=k}^{\rho(j)-1} x^{(j)}_n\delta^{\rho(j)-1-n}\\
	&=\delta^{\rho(j)-k}\sum_{n=0}^{k-1} a_n\delta^{k-1-n} 
	+\sum_{n=k}^{\rho(j)-1} x^{(j)}_n\delta^{\rho(j)-1-n}\\
	&=\delta^{\rho(j)-k}\sum_{n=0}^{\ell-1} a_n\delta^{\ell-1-n} 
	+\sum_{n=k}^{\rho(j)-1} x^{(j)}_n\delta^{\rho(j)-1-n}\\
	&=\sum_{n=0}^{\ell-1} a_n\delta^{\rho(j)-k+\ell-1-n} 
	+\sum_{n=\ell}^{\rho(j)-k+\ell-1} x^{(j)}_{n+k-\ell}\delta^{\rho(j)-k+\ell-1-n}.
\end{align*}
Thus, we have found a representation $a_0\cdots a_{\ell-1} x^{(j)}_k\cdots x^{(j)}_{\rho(j)-1}$ of $x^{(j)}$ which is shorter than $x^{(j)}_0\cdots x^{(j)}_{\rho(j)-1}$. This contradicts the definition of $\rho(j)$. 
\end{proof}

Similarly as what is done in~\cite{Frougny1992}, we define a Büchi automaton, which we call the \emph{zero automaton~$\mathcal{Z}(\delta,A)=(Q,0,Q,A,E)$}. The set of states is $Q=X^A(\delta)\cap \{z\in\C : |z|\le \frac{M}{|\delta|-1}\}$ where $M=\max\{|a| : a\in A\}$, and the transitions are given by the triplets $(z,a,z\delta+a)$ in $Q\times A\times Q$.

\begin{proposition}
\label{Pro : ZeroAutoAcceptsReprOf0Complex}
The zero automaton $\mathcal{Z}(\delta,A)$ accepts the set $Z(\delta,A)$.
\end{proposition}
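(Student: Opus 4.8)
The plan is to match the runs of $\mathcal{Z}(\delta,A)$ with the prefixes of a candidate representation. Given a word $a=a_0a_1a_2\cdots\in A^{\N}$, I would set $z_0=0$ and $z_{\ell+1}=z_\ell\delta+a_\ell$; an immediate induction gives $z_\ell=\sum_{n=0}^{\ell-1}a_n\delta^{\ell-1-n}$, so $z_\ell$ is exactly the element of $X^A(\delta)$ corresponding to the prefix $a_0\cdots a_{\ell-1}$ (and $z_0=0\in X^A(\delta)$ corresponds to the empty word). Since the transitions of $\mathcal{Z}(\delta,A)$ are precisely the triples $(z,a,z\delta+a)$ lying in $Q\times A\times Q$, the sequence $(z_\ell)_{\ell\ge0}$ is the unique run labelled by $a$ from the initial state, and it is a genuine infinite run if and only if $z_\ell\in Q$ for every $\ell$, i.e.\ if and only if $|z_\ell|\le\frac{M}{|\delta|-1}$ for all $\ell\in\N$. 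As every state of $\mathcal{Z}(\delta,A)$ is final, $a$ is accepted exactly when this bound holds, so the proof reduces to showing
\[
a\in Z(\delta,A)\quad\Longleftrightarrow\quad
\Bigl|\sum_{n=0}^{\ell-1}a_n\delta^{\ell-1-n}\Bigr|\le\frac{M}{|\delta|-1}
\text{ for all }\ell\in\N.
\]

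For the forward direction I would start from $\sum_{n=0}^{+\infty}a_n\delta^{-n-1}=0$ (the series converging absolutely since $|a_n|\le M$ and $|\delta|>1$), split it at index $\ell$, and multiply by $\delta^{\ell}$ to obtain $z_\ell=-\sum_{n=\ell}^{+\infty}a_n\delta^{\ell-1-n}=-\sum_{j=1}^{+\infty}a_{\ell-1+j}\delta^{-j}$; bounding this tail by the geometric series $\sum_{j\ge 1}M|\delta|^{-j}=\frac{M}{|\delta|-1}$ yields the desired estimate on $|z_\ell|$. For the converse I would observe that $z_\ell/\delta^{\ell}=\sum_{n=0}^{\ell-1}a_n\delta^{-n-1}$ is the $\ell$-th partial sum of the (absolutely convergent) series $\sum_{n=0}^{+\infty}a_n\delta^{-n-1}$, while $|z_\ell/\delta^{\ell}|\le\frac{M}{(|\delta|-1)|\delta|^{\ell}}\to 0$ as $\ell\to+\infty$; hence that series equals $0$ and $a\in Z(\delta,A)$.

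I do not anticipate a genuine obstacle here: the argument is bookkeeping together with one geometric-series estimate. The only point deserving care is that the radius $\frac{M}{|\delta|-1}$ appearing in the definition of $Q$ is exactly the bound produced by the tail estimate — this is what makes $\mathcal{Z}(\delta,A)$ large enough that every element of $Z(\delta,A)$ admits a full run, and at the same time small enough that every infinite run forces the defining series to vanish. It is also convenient to record at the outset that the transition relation is a partial function (from each state, a given letter leads to at most one state), so that one may speak unambiguously of \emph{the} run labelled by a given word.
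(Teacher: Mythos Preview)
Your proposal is correct and follows essentially the same approach as the paper: both identify the state reached after reading $a_0\cdots a_{\ell-1}$ with $z_\ell=\sum_{n=0}^{\ell-1}a_n\delta^{\ell-1-n}$, and both use that $z_\ell/\delta^{\ell}$ is the $\ell$-th partial sum together with the geometric tail bound $M/(|\delta|-1)$. The only cosmetic difference is that for the implication ``$a\in Z(\delta,A)\Rightarrow$ accepted'' you argue directly (expressing $z_\ell$ as minus the tail and bounding it), while the paper proves the contrapositive (if some $|z_\ell|$ exceeds the bound, a triangle-inequality estimate forces the full sum to be nonzero); these are the same computation viewed from opposite ends.
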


\begin{proof}
Let $a$ be an infinite word accepted by $\mathcal{Z}(\delta,A)$. For each $\ell\in \N$, the prefix $a_0\cdots a_{\ell-1}$ labels a path in $\mathcal{Z}(\delta,A)$ from the initial state $0$ to the state $\sum_{n=0}^{\ell-1}a_n\delta^{\ell-1-n}$, i.e., its corresponding element in the spectrum $X^A(\delta)$. By definition of the set of states $Q$, we get that the sequence $(\sum_{n=0}^{\ell-1}a_n\delta^{\ell-1-n})_{\ell\in \N}$ is bounded. Hence, we obtain that
\[
	\sum_{n=0}^{+\infty} \frac{a_n}{\delta^{n+1}}
	=\lim_{\ell\to +\infty} 
	\frac{\sum_{n=0}^{\ell-1}a_n\delta^{\ell-1-n}}{\delta^\ell}
	=0.
\]

Conversely, consider an infinite word $a$ over $A$ that is not accepted by $\mathcal{Z}(\delta,A)$. Then there exists $\ell\in\N$ such that $|\sum_{n=0}^{\ell-1}a_n\delta^{\ell-1-n}|>\frac{M}{|\delta|-1}$. Then
\[
	\left|\sum_{n=0}^{+\infty} \frac{a_n}{\delta^{n+1}}\right|
	\ge \left|\sum_{n=0}^{\ell-1}
	\frac{a_n}{\delta^{n+1}}\right|
	-\sum_{n=\ell}^{+\infty}\frac{M}{|\delta|^{n+1}}
	=\frac{\left|\sum_{n=0}^{\ell-1}a_n\delta^{\ell-n-1}\right|-\frac{M}{|\delta|-1}}{|\delta|^\ell}
	>0. 
\]
\end{proof}

We are now ready to state and prove the main theorem of this section, which is a generalization of Theorem~\ref{Thm : SetZeroAndSpectrumBeta}. This solves a problem that was left open in~\cite{FrougnyPelantova2018}.

\begin{theorem}
\label{Thm : SetZeroSpectrumAccZeroAutomatonComplex}
Let $\delta$ be a complex number such that $|\delta|>1$ and let $A$ be an alphabet of complex numbers. Then the following assertions are equivalent.
\begin{enumerate}
\item The set $Z(\delta,A)$ is accepted by a finite Büchi automaton.
\item The right congruence $\sim_{\delta,A}$ has finitely many classes. 
\item The spectrum $X^A(\delta)$ has no accumulation point in $\C$.
\item The zero automaton $\mathcal{Z}(\delta,A)$ is finite.
\end{enumerate} 
\end{theorem}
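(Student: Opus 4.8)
The plan is to prove the cyclic chain of implications $(1)\Rightarrow(2)\Rightarrow(3)\Rightarrow(4)\Rightarrow(1)$. Two of these links are immediate from what precedes. The implication $(2)\Rightarrow(3)$ is exactly the contrapositive of Lemma~\ref{Lem : NoAccEquivalenceInfinite}. The implication $(4)\Rightarrow(1)$ holds because, by Proposition~\ref{Pro : ZeroAutoAcceptsReprOf0Complex}, the zero automaton $\mathcal{Z}(\delta,A)$ is a Büchi automaton accepting $Z(\delta,A)$, and it is finite by hypothesis.

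For $(3)\Rightarrow(4)$, I would use a compactness argument. The state set of $\mathcal{Z}(\delta,A)$ is $Q=X^A(\delta)\cap\{z\in\C:|z|\le\frac{M}{|\delta|-1}\}$ with $M=\max\{|a|:a\in A\}$, so $Q$ is a bounded subset of $\C$. If $Q$ were infinite, then by the Bolzano--Weierstrass theorem it would admit an accumulation point lying in the closed disk, and that point would also be an accumulation point of $X^A(\delta)$ in $\C$, contradicting $(3)$. Hence $Q$ is finite; since the alphabet $A$ is finite, the transition set $E\subseteq Q\times A\times Q$ is finite as well, so $\mathcal{Z}(\delta,A)$ is a finite Büchi automaton.

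The heart of the argument is $(1)\Rightarrow(2)$. Assume that $Z(\delta,A)$ is accepted by a finite Büchi automaton $\A=(Q,I,F,A,E)$. For $w\in A^*$, let $\Delta(w)\subseteq Q$ denote the set of states reachable from some initial state along a path labelled $w$. I would first establish that, for every $a\in A^*$ and every $s\in A^{\N}$, the word $as$ is accepted by $\A$ if and only if $\A$, when started in some state of $\Delta(a)$, accepts $s$: given an accepting run on $as$, the state it occupies after reading the prefix $a$ lies in $\Delta(a)$, and the tail of the run is a run on $s$ visiting $F$ infinitely often; conversely, two such paths concatenate into an accepting run on $as$, the finitely many passages through $F$ occurring while reading the prefix $a$ being irrelevant to the Büchi acceptance condition. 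Consequently, whether $as\in Z(\delta,A)$ depends on $a$ only through the set $\Delta(a)$, so $\Delta(a)=\Delta(b)$ implies $a\sim_{\delta,A}b$. As $Q$ is finite, there are at most $2^{|Q|}$ distinct sets of the form $\Delta(w)$, and therefore $\sim_{\delta,A}$ has finitely many classes, which is $(2)$.

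I expect $(1)\Rightarrow(2)$ to be the only delicate point. For regular languages of \emph{finite} words this would be the easy direction of the Myhill--Nerode theorem, but $\omega$-languages recognised by finite Büchi automata are in general \emph{not} characterised by a finite-index right congruence, so the step cannot be invoked as a black box: one must exploit the particular form of $Z(\delta,A)$ through the observation above, namely that a finite prefix of an accepted infinite word contributes only finitely many visits to final states and hence cannot affect acceptance. Everything else reduces to Lemmas~\ref{Lem : ComplexEquivalenceIIFEquality}--\ref{Lem : NoAccEquivalenceInfinite}, Proposition~\ref{Pro : ZeroAutoAcceptsReprOf0Complex}, and the Bolzano--Weierstrass theorem.
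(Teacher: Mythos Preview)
Your proof is correct and follows exactly the same cyclic chain $(1)\Rightarrow(2)\Rightarrow(3)\Rightarrow(4)\Rightarrow(1)$ as the paper, invoking the same ingredients (Lemma~\ref{Lem : NoAccEquivalenceInfinite}, Proposition~\ref{Pro : ZeroAutoAcceptsReprOf0Complex}, and boundedness of the state set for $(3)\Rightarrow(4)$); where the paper merely cites \cite{Frougny1992} for $(1)\Rightarrow(2)$, you spell out the standard $\Delta(w)$ argument, which is fine.

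One small correction to your closing commentary: the argument you give for $(1)\Rightarrow(2)$ does \emph{not} exploit any particular feature of $Z(\delta,A)$. The observation that a finite prefix contributes only finitely many visits to final states, and hence that acceptance of $as$ depends only on $\Delta(a)$, holds for \emph{every} B\"uchi-recognizable $\omega$-language; so this direction is indeed a black box. What fails in general is only the converse (a finite-index right congruence does not force B\"uchi-recognizability), which you do not need.
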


\begin{proof}
Suppose that $Z(\delta,A)$ is accepted by a finite Büchi automaton. Similarly as in~\cite[Lemma 3.2]{Frougny1992}), we get that the right congruence $\sim_{\delta,A}$ has only finitely many classes. Hence $(1)\implies (2)$. The implication $(2)\implies (3)$ is given by Lemma~\ref{Lem : NoAccEquivalenceInfinite}. The implication $(3)\implies (4)$ follows directly from the definition of the zero automaton. Finally, the implication $(4)\implies (1)$ follows from Proposition~\ref{Pro : ZeroAutoAcceptsReprOf0Complex}.
\end{proof}

Note that the zero automaton is deterministic. Therefore, the previous result shows in particular that if the set $Z(\delta,A)$ is accepted by an arbitrary Büchi automaton, possibly non deterministic, then it must be also accepted by a deterministic one.

\section{Spectrum and representations of zero in alternate bases}
\label{Sec : SpectrumAlternate}

From now on, we consider a fixed  positive integer $p$ and an alternate base $\B=(\beta_0,\ldots,\beta_{p-1})$, and we set $\pr=\prod_{i=0}^{p-1}\beta_i$. Moreover, we consider a $p$-tuple $\boldsymbol{D}=(D_0,\ldots,D_{p-1})$ where, for all $i\in \Int$, $D_i$ is an alphabet of integers containing $0$. We use the convention that for all $n\in \Z$, $D_n=D_{n\bmod p}$ and $\boldsymbol{D}^{(n)}=(D_n,\ldots, D_{n+p-1})$. 

Grouping terms $p$ by $p$, Equality~\eqref{Eq : valueAlternatBase} can be written as 
\[
	x=\sum_{m=0}^{+\infty} \frac{\sum_{i=0}^{p-1}a_{mp+i}\beta_{i+1}\cdots\beta_{p-1}}{\delta^{m+1}}.
\]
If we add the constraint that each letter $a_n$ belongs to $D_n$, then we obtain a $\pr$-representation of $x$ over the alphabet
\begin{equation}
\label{Eq : Digits}	
	\Dig
	=\{\sum_{i=0}^{p-1} a_i\beta_{i+1}\cdots\beta_{p-1} : 
	\forall i\in\Int,\ a_i\in D_i\}.
\end{equation}
An analogous link was done in~\cite{CharlierCisterninoDajani2021} in the restricted case of alphabets $D_i=[\![0,\ceil{\beta_i}-1]\!]$.

We let $\otimes_{n=0}^{+\infty}D_n$ denote the set of infinite words over the alphabet $\cup_{i=0}^{p-1}D_i$ such that for all $n\in\N$, the $n$-th letter belongs to the alphabet $D_n$:
\[
	\bigotimes_{n=0}^{+\infty}D_n
	=\{a\in (\cup_{i=0}^{p-1}D_i)^\N :
	\forall n\in\N,\ a_n\in D_n\}.
\]
We let $Z(\B,\boldsymbol{D})$ denote the set of $\B$-representations of zero the $n$-th digit of which belongs to the alphabet $D_n$:
\[
	Z(\B,\boldsymbol{D})
	=\{a \in \bigotimes_{n=0}^{+\infty} D_n : \sum_{n=0}^{+\infty}\frac{a_n}{\prod_{k=0}^n\beta_k}=0\}.
\]
This set can be seen as a set of infinite words over the alphabet $\cup_{i=0}^{p-1}D_i$.

For $\pr=\prod_{i=0}^{p-1}\beta_i$ and the alphabet $\Dig$, the spectrum $X^{\Dig}(\pr)$ defined in~\eqref{eq:spectrum} can be rewritten as
\[
	X^{\Dig}(\pr) 
	= X^{D_0}(\pr)\cdot \beta_1\cdots\beta_{p-1} 
	+ X^{D_1}(\pr)\cdot \beta_2\cdots \beta_{p-1} 
	+ \cdots 
	+ X^{D_{p-1}}(\pr).
\]
For the sake of simplicity, for each $i\in \Int$, we let $X(i)$ denote the spectrum built from the shifted base $\B^{(i)}$ and the shifted $p$-tuple of alphabets $\boldsymbol{D}^{(i)}$. In particular, we have $X(0)=X^{\Dig}(\pr)$.

\begin{lemma}
\label{Lem : SpectrumAccAllSpectraAcc}
For each $i\in \Int$, we have $X(i)\cdot \beta_i+D_i=X(i+1)$ where it is understood that $X(p)=X(0)$.
\end{lemma}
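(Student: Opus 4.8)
The plan is to unwind the definitions of the two spectra $X(i)$ and $X(i+1)$ and show directly that adding one more digit from $D_i$ on the right, after scaling by $\beta_i$, converts representations over the shifted base $\B^{(i)}$ into representations over $\B^{(i+1)}$. Recall that $X(i)=X^{\Dig^{(i)}}(\pr)$ where $\Dig^{(i)}$ is the digit set built from $\B^{(i)}$ as in~\eqref{Eq : Digits}, i.e.
\[
	\Dig^{(i)}
	=\{\sum_{j=0}^{p-1} c_j\,\beta_{i+j+1}\cdots\beta_{i+p-1} :
	\forall j\in\Int,\ c_j\in D_{i+j}\}.
\]
Using the decomposition of $X^{\Dig^{(i)}}(\pr)$ as a sum displayed just before the lemma, we have
\[
	X(i)
	= X^{D_i}(\pr)\cdot \beta_{i+1}\cdots\beta_{i+p-1}
	+ X^{D_{i+1}}(\pr)\cdot \beta_{i+2}\cdots \beta_{i+p-1}
	+ \cdots
	+ X^{D_{i+p-1}}(\pr),
\]
and similarly for $X(i+1)$ with all indices shifted by one, so that the rightmost summand of $X(i+1)$ is $X^{D_{i+p}}(\pr)=X^{D_i}(\pr)$ (using $D_{i+p}=D_i$).

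First I would prove the inclusion $X(i)\cdot\beta_i+D_i\subseteq X(i+1)$. Take an element $x\in X(i)$; by the decomposition above it is a sum of $p$ terms, the $k$-th of which (for $k\in\Int$) is of the form $y_k\cdot\beta_{i+k+1}\cdots\beta_{i+p-1}$ with $y_k\in X^{D_{i+k}}(\pr)$. Multiplying by $\beta_i$: the first term ($k=0$) becomes $y_0\cdot\beta_i\beta_{i+1}\cdots\beta_{i+p-1}=y_0\cdot\pr$, and since $X^{D_{i+1}}(\pr)\cdot\pr+D_{i+1}\subseteq X^{D_{i+1}}(\pr)$ by definition of a one-digit spectrum (appending a digit), more precisely $y_0\pr + (\text{digit of }D_{i+1})$ lies in $X^{D_{i+1}}(\pr)$ — so after adding the $k=1$ term of $\beta_i X(i)$ one gets an element of $X^{D_{i+1}}(\pr)\cdot\beta_{i+2}\cdots\beta_{i+p-1}$. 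Meanwhile the terms $k=1,\dots,p-1$ of $\beta_i X(i)$ have the shape $y_k\cdot\beta_i\beta_{i+k+1}\cdots\beta_{i+p-1}$; I would simply re-read these as the summands $y_k\cdot\beta_{(i+1)+(k-1)+1}\cdots\beta_{(i+1)+p-1}\cdot\beta_i$ — here the extra factor $\beta_i=\beta_{(i+1)+p-1}$ is exactly the trailing base that appears in the last slot of the $X(i+1)$ decomposition, so these match the summands $2,\dots,p$ of $X(i+1)$. Finally the added digit $d\in D_i$ sits as a $c_0$-type digit (multiplied by the empty product of bases, i.e.\ by $1$) in $\Dig^{(i+1)}$: indeed $D_i=D_{(i+1)+(p-1)}$, which is the last alphabet in $\boldsymbol{D}^{(i+1)}$. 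Reassembling, $x\beta_i+d$ is displayed as an element of $X(i+1)$.

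The reverse inclusion $X(i+1)\subseteq X(i)\cdot\beta_i+D_i$ is the symmetric statement, and the cleanest way to get it is to argue at the level of corresponding words rather than juggling the sum decomposition again. An element $w\in X(i+1)$ corresponds to a finite word over $\Dig^{(i+1)}$, equivalently — after expanding each super-digit of $\Dig^{(i+1)}$ into its $p$ integer components à la~\eqref{Eq : Digits} — to a finite word over $\cup D_j$ whose letters, read right to left, cycle through $D_{i},D_{i-1},\dots$ (matching the base $\B^{(i+1)}$). If this word has no letter at all, $w=0$ and $w=0\cdot\beta_i+0$ works; otherwise peel off its last letter $d$, which lies in $D_i$: the remaining prefix corresponds under $\val$-type bookkeeping to some $x\in X(i)$ and one checks $w=x\beta_i+d$ by comparing the defining sums, exactly reversing the computation above. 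I expect the main obstacle to be purely bookkeeping: keeping the cyclic index conventions ($\beta_n=\beta_{n\bmod p}$, $D_n=D_{n\bmod p}$, $\B^{(n)}$, $\boldsymbol{D}^{(n)}$) straight while matching the $p$ summands and verifying the empty-word (i.e.\ $0\in X(i)$) edge case; there is no real analytic or algebraic difficulty, since every identity used is a finite polynomial rearrangement together with the one-step recursion $X^{D_j}(\pr)\cdot\pr+D_j=X^{D_j}(\pr)$ implicit in definition~\eqref{eq:spectrum}.
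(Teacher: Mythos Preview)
Your overall strategy --- use the sum decomposition
\[
X(i)=\sum_{k=0}^{p-1} X^{D_{i+k}}(\pr)\cdot\beta_{i+k+1}\cdots\beta_{i+p-1}
\]
together with the one-step recursion $X^{D_j}(\pr)\cdot\pr+D_j=X^{D_j}(\pr)$ --- is exactly the paper's approach. However, your execution of the forward inclusion is mis-indexed in a way that makes the argument fail.

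After multiplying by $\beta_i$, the $k=0$ term becomes $y_0\pr$ with $y_0\in X^{D_i}(\pr)$, not $X^{D_{i+1}}(\pr)$. Your claim that ``$y_0\pr+(\text{digit of }D_{i+1})$ lies in $X^{D_{i+1}}(\pr)$'' is false in general: the recursion you need is $X^{D_i}(\pr)\cdot\pr+D_i=X^{D_i}(\pr)$, with index $i$. Consequently the correct matching is the opposite of what you describe: the terms $k=1,\ldots,p-1$ of $\beta_i X(i)$ become $y_k\cdot\beta_{i+k+1}\cdots\beta_{i+p}$ and match the \emph{first} $p-1$ summands of $X(i+1)$ (indices $0,\ldots,p-2$), while the $k=0$ term $y_0\pr$ combines with the extra digit $d\in D_i$ to give an element of $X^{D_i}(\pr)=X^{D_{i+p}}(\pr)$, which is the \emph{last} summand of $X(i+1)$. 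With this correction both inclusions fall out at once from the single equality $X^{D_i}(\pr)\cdot\pr+D_i=X^{D_i}(\pr)$, and the separate word-peeling argument for the reverse inclusion (which is basically fine, modulo padding the length-$(\ell p-1)$ prefix by a leading zero to get a length that is a multiple of $p$) becomes unnecessary.
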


\begin{proof}
For each $i\in \Int$, we have
\[
	X(i) 
	= X^{D_i}(\pr)\cdot \beta_{i+1}\cdots\beta_{i+p-1}  
	+ X^{D_{i+1}}(\pr)\cdot \beta_{i+2}\cdots \beta_{i+p-1}  
	+ \cdots 
	+ X^{D_{i+p-1}}(\pr).
\]
Since $(X^{D_i}(\pr)\cdot\beta_{i+1}\cdots\beta_{i+p-1})\cdot \beta_i+D_i
=X^{D_i}(\pr)\cdot \delta +D_i
= X^{D_i}(\pr)$, the conclusion follows.
\end{proof}

\begin{lemma}
\label{Lem : SpectrumWordElementCorrespondence}
For all $\ell\in\N$, we have $\sum_{n=0}^{\ell-1} D_n \cdot \beta_{n+1}\cdots \beta_{\ell-1} \subset X(\ell\bmod p)$. 
\end{lemma}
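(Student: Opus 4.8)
The plan is to prove this by induction on $\ell$, using the recursive relation from Lemma~\ref{Lem : SpectrumAccAllSpectraAcc}. The statement to establish is that for all $\ell\in\N$,
\[
	\sum_{n=0}^{\ell-1} D_n \cdot \beta_{n+1}\cdots \beta_{\ell-1} \subset X(\ell\bmod p).
\]
The base case $\ell=0$ is immediate: the empty sum is $\{0\}$, and $0\in X(0)$ since every spectrum $X(i)$ contains $0$ (corresponding to the empty word, and also because $0\in D_i$ for every $i$).

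For the inductive step, suppose the inclusion holds for some $\ell$, and consider $\ell+1$. First I would factor out $\beta_\ell$ from the first $\ell$ terms of the sum: observe that
\[
	\sum_{n=0}^{\ell} D_n \cdot \beta_{n+1}\cdots \beta_{\ell}
	= \left(\sum_{n=0}^{\ell-1} D_n \cdot \beta_{n+1}\cdots \beta_{\ell-1}\right)\cdot \beta_\ell + D_\ell,
\]
where the last summand $D_\ell$ comes from the $n=\ell$ term (an empty product $\beta_{\ell+1}\cdots\beta_\ell$ equals $1$). By the induction hypothesis, the parenthesized set is contained in $X(\ell\bmod p)$, so the whole expression is contained in $X(\ell\bmod p)\cdot\beta_\ell + D_\ell$. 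Now I would invoke Lemma~\ref{Lem : SpectrumAccAllSpectraAcc} with $i=\ell\bmod p$, which gives $X(\ell\bmod p)\cdot\beta_{\ell\bmod p} + D_{\ell\bmod p} = X((\ell+1)\bmod p)$; since $\beta_\ell=\beta_{\ell\bmod p}$ and $D_\ell=D_{\ell\bmod p}$ by the periodicity convention, this is exactly $X((\ell+1)\bmod p)$, completing the induction.

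I do not expect any serious obstacle here; the only points requiring care are bookkeeping ones: handling the empty product convention $\beta_{\ell+1}\cdots\beta_\ell=1$ correctly so that the $n=\ell$ term contributes precisely $D_\ell$, and keeping the indices reduced modulo $p$ consistent with the stated conventions $\beta_n=\beta_{n\bmod p}$, $D_n=D_{n\bmod p}$, $X(p)=X(0)$. One should also double-check that $0$ lies in each $X(i)$ to close the base case, which follows since $X(i)$ is the spectrum of $\B^{(i)}$ over alphabets each containing $0$, so the length-one word $0$ corresponds to the element $0$.
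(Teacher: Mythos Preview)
Your proof is correct and follows essentially the same approach as the paper, which simply states that the result is an easy induction using Lemma~\ref{Lem : SpectrumAccAllSpectraAcc} together with the fact that $D_n\subset X(n+1)$ for all $n\in\N$. Your write-up spells out the induction in full detail; the paper's mention of $D_n\subset X(n+1)$ amounts to the base case $\ell=1$ (or equivalently follows from $0\in X(n)$ via Lemma~\ref{Lem : SpectrumAccAllSpectraAcc}), which is interchangeable with your choice of $\ell=0$ as base case.
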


\begin{proof}
This is an easy induction using Lemma~\ref{Lem : SpectrumAccAllSpectraAcc} and the fact that $D_n\subset X(n+1)$ for all $n\in\N$.
\end{proof}

In view of the previous lemma, if for each $n\in[\![0,\ell-1]\!]$, the digit $a_n$ belongs to the alphabet $D_n$, then we say that the finite word $a_0\ldots a_{\ell-1}$ \emph{corresponds} to the element $\sum_{n=0}^{\ell-1} a_n \beta_{n+1}\cdots \beta_{\ell-1}$ of the spectrum $X(\ell\bmod p)$. 

Let us now generalize the notion of zero automaton to the context of alternate bases. For each $i\in\Int$, we define
\[
	M^{(i)}=\sum_{n=i}^{+\infty}\frac{\max(D_n)}{\prod_{k=i}^n\beta_k}
\qquad
\text{ and }
\qquad	
	m^{(i)}=\sum_{n=i}^{+\infty}\frac{\min(D_n)}{\prod_{k=i}^n\beta_k}
\] 
where $\max(D_n)$ and $\min(D_n)$ respectively denote the maximal and minimal digit in the alphabet $D_n$. As usual, for $n\in \Z$, we set $M^{(n)}=M^{(n\bmod p)}$ and $m^{(n)}=m^{(n\bmod p)}$. We define the \emph{zero automaton} $\mathcal{Z}(\B,\boldsymbol{D})$ associated with the alternate base $\B$ and the $p$-tuple of alphabets $\boldsymbol{D}$ as
\[
	\mathcal{Z}(\B,\boldsymbol{D})
	=(Q_{\B,\boldsymbol{D}}, (0,0), Q_{\B,\boldsymbol{D}}, \cup_{i=0}^{p-1}D_i, E)
\]
where 
\begin{itemize}
\item $Q_{\B,\boldsymbol{D}} = \bigcup_{i=0}^{p-1} (\{i\} \times ( X(i) \cap [-M^{(i)},-m^{(i)}]))$
\item $E$ is the set of transitions defined as follows: for $(i,s),(j,t)\in Q_{\B,\boldsymbol{D}}$ and $a\in \cup_{i=0}^{p-1}D_i$, there is a transition $(i,s) \xrightarrow[ \quad ]{a} (j,t)$ if and only if $j\equiv i+1 \pmod p$, $a\in D_i$ and $t=\beta_i s +a$.
\end{itemize}
Observe that since we have assumed that all the alphabets $D_i$ contain the digit $0$, the initial state $(0,0)$ is indeed an element of $Q_{\B,\boldsymbol{D}}$. Moreover, if $s\in X(i)$ and $a\in D_i$ then $\beta_i s+a\in X(i+1)$ by Lemma~\ref{Lem : SpectrumAccAllSpectraAcc}.

\begin{proposition}
\label{Pro : ZeroAutoAcceptsReprOf0}
The zero automaton $\mathcal{Z}(\B,\boldsymbol{D})$ accepts the set $Z(\B,\boldsymbol{D})$.
\end{proposition}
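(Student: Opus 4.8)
The plan is to mimic the proof of Proposition~\ref{Pro : ZeroAutoAcceptsReprOf0Complex}, adapting the estimates to the alternate-base setting where the single base $\delta$ is replaced by the cyclic sequence of bases $\beta_0,\ldots,\beta_{p-1}$ and the state now carries a residue class modulo $p$ recording which base is to be applied next. First I would fix an infinite word $a=a_0a_1a_2\cdots\in\bigotimes_{n=0}^{+\infty}D_n$ and observe that, by construction of $E$, there is at most one path from the initial state $(0,0)$ labelled by a given prefix $a_0\cdots a_{\ell-1}$; if such a path exists its terminal state is $(\ell\bmod p,\, s_\ell)$ where $s_\ell=\sum_{n=0}^{\ell-1}a_n\beta_{n+1}\cdots\beta_{\ell-1}$ is the element of $X(\ell\bmod p)$ corresponding to the prefix (this is a short induction using $t=\beta_i s+a$ and Lemma~\ref{Lem : SpectrumWordElementCorrespondence}). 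So accepting $a$ is equivalent to: every $s_\ell$ lies in the interval $[-M^{(\ell\bmod p)},-m^{(\ell\bmod p)}]$, because all states are final, hence a run is accepting exactly when it is infinite, i.e.\ when it never gets stuck for lack of a state.

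The forward direction: suppose $a$ is accepted. Then $s_\ell$ is bounded (the finitely many intervals $[-M^{(i)},-m^{(i)}]$ have a common bound), and since $\prod_{k=0}^{\ell-1}\beta_k\to+\infty$ we get
\[
	\sum_{n=0}^{+\infty}\frac{a_n}{\prod_{k=0}^{n}\beta_k}
	=\lim_{\ell\to+\infty}\frac{s_\ell}{\prod_{k=0}^{\ell-1}\beta_k}=0,
\]
so $a\in Z(\B,\boldsymbol{D})$. For this I would use the identity $\sum_{n=0}^{\ell-1}\frac{a_n}{\prod_{k=0}^{n}\beta_k}=\frac{s_\ell}{\prod_{k=0}^{\ell-1}\beta_k}$, which follows by dividing the definition of $s_\ell$ by $\beta_0\cdots\beta_{\ell-1}$.

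The converse: suppose $a\in\bigotimes_{n=0}^{+\infty}D_n$ is not accepted. Since the alphabet constraint $a_n\in D_n$ is satisfied, the only way a run can fail is that for some $\ell$ the state $(\ell\bmod p,\,s_\ell)$ is not in $Q_{\B,\boldsymbol{D}}$, i.e.\ $s_\ell\notin[-M^{(\ell\bmod p)},-m^{(\ell\bmod p)}]$. Writing $i=\ell\bmod p$, I would then estimate the tail $\sum_{n=\ell}^{+\infty}\frac{a_n}{\prod_{k=0}^{n}\beta_k}$ using $\min(D_n)\le a_n\le\max(D_n)$, which gives exactly
\[
	\frac{m^{(i)}}{\prod_{k=0}^{\ell-1}\beta_k}
	\le\sum_{n=\ell}^{+\infty}\frac{a_n}{\prod_{k=0}^{n}\beta_k}
	\le\frac{M^{(i)}}{\prod_{k=0}^{\ell-1}\beta_k}
\]
(after factoring out $\prod_{k=0}^{\ell-1}\beta_k$ and reindexing the geometric-type series so that it matches the definitions of $m^{(i)}$ and $M^{(i)}$). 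Combining with the head identity, $\sum_{n=0}^{+\infty}\frac{a_n}{\prod_{k=0}^{n}\beta_k}=\frac{1}{\prod_{k=0}^{\ell-1}\beta_k}\big(s_\ell+\sum_{n=\ell}^{+\infty}a_n\prod_{k=\ell}^{n}\beta_k^{-1}\cdot\prod_{k=0}^{\ell-1}\beta_k\big)$; more cleanly, $\sum_{n=0}^{+\infty}\frac{a_n}{\prod_{k=0}^{n}\beta_k}$ lies in the interval $\frac{1}{\prod_{k=0}^{\ell-1}\beta_k}\big(s_\ell+[m^{(i)},M^{(i)}]\big)$, and since $s_\ell\notin[-M^{(i)},-m^{(i)}]$ this interval does not contain $0$. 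Hence $a\notin Z(\B,\boldsymbol{D})$.

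The main obstacle is purely bookkeeping: getting the index shifts right so that the tail sum, after pulling out $\prod_{k=0}^{\ell-1}\beta_k$, genuinely equals a sum over $n\ge i$ of $\min(D_n)/\prod_{k=i}^{n}\beta_k$ (using the periodicity convention $\beta_n=\beta_{n\bmod p}$ and $D_n=D_{n\bmod p}$, and the fact that $a_n\in D_n$ so $a_{\ell+j}$ ranges in $D_{i+j}$). Once the correspondence between the finite word and its spectrum element is set up (Lemma~\ref{Lem : SpectrumWordElementCorrespondence}), and once one notes that every state is final so acceptance is just "the run never dies," the two inclusions are immediate from these estimates, exactly as in the complex-base case.
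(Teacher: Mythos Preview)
Your proof is correct and follows essentially the same approach as the paper: both identify the state reached after reading $a_0\cdots a_{\ell-1}$ as $(\ell\bmod p,\,s_\ell)$ with $s_\ell=\sum_{n=0}^{\ell-1}a_n\beta_{n+1}\cdots\beta_{\ell-1}$, use boundedness of $s_\ell$ to get $\val_\B(a)=0$ in the forward direction, and for the converse bound the tail by $m^{(\ell)}$ and $M^{(\ell)}$ to show the total sum is nonzero when some $s_\ell$ falls outside $[-M^{(\ell)},-m^{(\ell)}]$. Your interval formulation $\val_\B(a)\in\frac{1}{\prod_{k=0}^{\ell-1}\beta_k}\big(s_\ell+[m^{(i)},M^{(i)}]\big)$ is just a more compact packaging of the paper's case split, and your explicit remark that acceptance is equivalent to the run never getting stuck (since every state is final) makes the logic slightly more transparent than the paper's presentation.
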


\begin{proof}
Let $a$ be an infinite word accepted by $\mathcal{Z}(\B,\boldsymbol{D})$. For each $\ell\in \N$, the prefix $a_0\cdots a_{\ell-1}$ labels a path in $\mathcal{Z}(\B,\boldsymbol{D})$ from the initial state $(0,0)$ to the state 
\[
 	(\ell \bmod p, \sum_{n=0}^{\ell-1}a_n\beta_{n+1}\cdots\beta_{\ell-1}).
\]
Therefore, the sequence $\big(\sum_{n=0}^{\ell-1}a_n\beta_{n+1}\cdots\beta_{\ell-1}\big)_{\ell\in \N}$ is bounded. Hence, we get 
\[
	\sum_{n=0}^{+\infty}\frac{a_n}{\prod_{k=0}^n\beta_k}
	=\lim_{\ell\to +\infty} \frac{\sum_{n=0}^{\ell-1}a_n\beta_{n+1}\cdots\beta_{\ell-1}}{\prod_{n=0}^{\ell-1}\beta_n}
	=0.
\]

Conversely, consider an infinite word $a$ such that $a_n\in D_n$ for all $n\in\N$ and that is not accepted by $\mathcal{Z}(\B,\boldsymbol{D})$. Then, there exists $\ell\in\N$ such that $(\ell \bmod p, \sum_{n=0}^{\ell-1}a_n\beta_{n+1}\cdots\beta_{\ell-1})\notin Q_{\B,\boldsymbol{D}}$. In view of Lemma~\ref{Lem : SpectrumWordElementCorrespondence}, we get $\sum_{n=0}^{\ell-1}a_n\beta_{n+1}\cdots\beta_{\ell-1}\notin [-M^{(\ell)},-m^{(\ell)}]$. Suppose that $\sum_{n=0}^{\ell-1}a_n\beta_{n+1}\cdots\beta_{\ell-1}>-m^{(\ell)}$ (the other case is symmetric). Then
\[
	\sum_{n=0}^{+\infty}\frac{a_n}{\prod_{k=0}^n\beta_k}
	\ge \sum_{n=0}^{\ell-1}
	\frac{a_n}{\prod_{k=0}^n\beta_k}
	+\sum_{n=\ell}^{+\infty}\frac{\min(D_n)}{\prod_{k=0}^n\beta_k}
	=\frac{\sum_{n=0}^{\ell-1}a_n\beta_{n+1}\cdots\beta_{\ell-1}+m^{(\ell)}}{\prod_{n=0}^{\ell-1}\beta_n}
	>0. 
\]
\end{proof}

\begin{example}
\label{Ex : BaseHabituelle}
Consider the alternate base $\B=(\frac{1+ \sqrt{13}}{2},\frac{5+ \sqrt{13}}{6})$ and the pair of alphabets $\boldsymbol{D}=([\![-2,2]\!],[\![-1,1]\!])$. Then $M^{(0)}=\val_{\B}((21)^\omega)\simeq 1.67994$ and $M^{(1)}= \val_{\B^{(1)}}((12)^\omega) \simeq 1.86852$. The zero automaton $\mathcal{Z}(\B,\boldsymbol{D})$ is depicted in Figure~\ref{Fig : ZeroAutomaton-1+Sqrt13-accessible} where the states with first components $0$ and $1$ are colored in pink and purple respectively, and where the edges labeled by $-2,-1,0,1$ and $2$ are colored in dark blue, dark green, red, light green and light blue respectively. For instance, the infinite words $1(\overline{1}0)^\omega$ and $(0\overline{1}21\overline{2}\overline{1})^\omega$ have value $0$ in base $\B$ (where $\overline{1}$ and $\overline{2}$ designate the digits $-1$ and $-2$ respectively).
\begin{figure}
\centering
\begin{tikzpicture}[node distance=7 cm, scale = 1.1, shape=rectangle,transform shape,  every state/.style = {draw =black}, inner sep=3pt]
\tikzstyle{every node}=[shape=rectangle, fill=none, draw=black,
 inner sep=2pt]
\node[accepting,fill=pink] (00) at (0,0) {\small{$0$}};
\node[accepting,fill=pink] (01) at (2,-3) {\small{$1$}};
\node[accepting,fill=pink] (0m1) at (-2,-3) {\small{${-}1$}};
\node[accepting,fill=pink] (0mb1) at (-4,0) {\small{${-}\beta_1$}};
\node[accepting,fill=pink] (0b1) at (4,0) {\small{$\beta_1$}};
\node[accepting,fill=pink] (0b1m1) at (6,0) {\small{$\beta_1{-}1$}};
\node[accepting,fill=pink] (0mb1p1) at (-6,0) {\small{${-}\beta_1{+}1$}};
\node[accepting,fill=pink] (02b1m2) at (2,-6) {\small{$2\beta_1{-}2$}};
\node[accepting,fill=pink] (0m2b1p2) at (-2,-6) {\small{${-}2\beta_1{+}2$}};
\node[accepting,fill=pink] (0b1m2) at (6,-6) {\small{$\beta_1{-}2$}};
\node[accepting,fill=pink] (0mb1p2) at (-6,-6) {\small{${-}\beta_1{+}2$}};
\node[accepting,fill=violet!50] (10) at (0,-3) {\small{$0$}};
\node[accepting,fill=violet!50] (11) at (2,0) {\small{$1$}};
\node[accepting,fill=violet!50] (1m1) at (-2,0) {\small{${-}1$}};
\node[accepting,fill=violet!50] (1b0m1) at (4,-3) {\small{$\beta_0{-}1$}};
\node[accepting,fill=violet!50] (1mb0p1) at (-4,-3) {\small{${-}\beta_0{+}1$}};
\node[accepting,fill=violet!50] (1b0m2) at (6,-3) {\small{$\beta_0{-}2$}};
\node[accepting,fill=violet!50] (1mb0p2) at (-6,-3) {\small{${-}\beta_0{+}2$}};
\node[accepting,fill=violet!50] (1b0m3) at (-4,-6) {\small{$\beta_0{-}3$}};
\node[accepting,fill=violet!50] (1mb0p3) at (4,-6) {\small{${-}\beta_0{+}3$}};

\tikzstyle{every node}=[shape=circle, fill=none, draw=black, minimum size=15pt, inner sep=2pt]
\tikzstyle{every path}=[color=black, line width=0.5 pt]
\tikzstyle{every node}=[shape=circle, minimum size=5pt, inner sep=2pt]
\draw [-Latex] (0,0.6) to node [above] {$ $} (00); 

\draw [-Latex,MyGreen] (00) to node {$ $} (1m1); 
\draw [-Latex,MyGreen!35!lime] (00) to node {$ $} (11); 
\draw [-Latex,red] (00) to [bend right=20] node {$ $} (10); 

\draw [-Latex,MyGreen] (10) to node {$ $} (0m1); 
\draw [-Latex,MyGreen!35!lime] (10) to node {$ $} (01); 
\draw [-Latex,red] (10) to [bend right=20] node {$ $} (00); 

\draw [-Latex,red] (11) to node {$ $} (0b1); 
\draw [-Latex,MyGreen] (11) to [bend left=20] node {$ $} (0b1m1); 

\draw [-Latex,red] (1m1) to node {$ $} (0mb1); 
\draw [-Latex,MyGreen!35!lime] (1m1) to [bend right=20] node {$ $} (0mb1p1); 

\draw [-Latex,MyGreen!35!lime] (0m1) to node {$ $} (1mb0p1); 
\draw [-Latex,cyan] (0m1) to [bend left=20] node {$ $} (1mb0p2); 

\draw [-Latex,MyGreen] (01) to node {$ $} (1b0m1); 
\draw [-Latex,blue] (01) to [bend right=20] node {$ $} (1b0m2); 

\draw [-Latex,cyan] (0mb1) to node {$ $} (1mb0p1); 

\draw [-Latex,blue] (0b1) to node {$ $} (1b0m1); 

\draw [-Latex,red] (0mb1p1) to [bend right=20] node {$ $} (1m1); 
\draw [-Latex,MyGreen!35!lime] (0mb1p1) to node {$ $} (10); 
\draw [-Latex,cyan] (0mb1p1) to  [out=30, in=150] node {$ $} (11); 

\draw [-Latex,red] (0b1m1) to [bend left=20] node {$ $} (11); 
\draw [-Latex,MyGreen] (0b1m1) to node {$ $} (10); 
\draw [-Latex,blue] (0b1m1) to  [out=150, in=30] node {$ $} (1m1); 

\draw [-Latex,MyGreen!35!lime] (1mb0p1) to node {$ $} (0m2b1p2); 

\draw [-Latex,MyGreen] (1b0m1) to node {$ $} (02b1m2); 

\draw [-Latex,MyGreen!35!lime] (0m2b1p2) to [bend left=20]  node {$ $} (1m1); 
\draw [-Latex,cyan] (0m2b1p2) to node {$ $} (10); 

\draw [-Latex,MyGreen] (02b1m2) to [bend right=20]  node {$ $} (11); 
\draw [-Latex,blue] (02b1m2) to node {$ $} (10); 

\draw [-Latex,MyGreen!35!lime] (1mb0p2) to node {$ $} (0mb1p2); 
\draw [-Latex,red] (1mb0p2) to node {$ $} (0mb1p1); 
\draw [-Latex,MyGreen] (1mb0p2) to node {$ $} (0mb1); 

\draw [-Latex,MyGreen] (1b0m2) to node {$ $} (0b1m2); 
\draw [-Latex,red] (1b0m2) to node {$ $} (0b1m1); 
\draw [-Latex,MyGreen!35!lime] (1b0m2) to node {$ $} (0b1); 

\draw [-Latex,red] (0b1m2) to node {$ $} (1mb0p1); 
\draw [-Latex,MyGreen!35!lime] (0b1m2)  to  node  {$ $} (-5.9,-3.3); 
\draw [-Latex,cyan] (0b1m2)  to node {$ $} (1mb0p3); 

\draw [-Latex,red] (0mb1p2) to node {$ $} (1b0m1); 
\draw [-Latex,MyGreen] (0mb1p2)  to  node  {$ $} (5.9,-3.3); 
\draw [-Latex,blue] (0mb1p2)  to node {$ $} (1b0m3); 

\draw [-Latex,red] (1b0m3)  to node {$ $} (01); 
\draw [-Latex,MyGreen] (1b0m3)  to [bend left=20]  node {$ $} (00); 

\draw [-Latex,red] (1mb0p3)  to node {$ $} (0m1); 
\draw [-Latex,MyGreen!35!lime] (1mb0p3)  to [bend right=20]  node {$ $} (00); 

\end{tikzpicture}
\caption{The zero automaton $\mathcal{Z}(\B,\boldsymbol{D})$ for $\B=(\frac{1+ \sqrt{13}}{2},\frac{5+ \sqrt{13}}{6})$ and $\boldsymbol{D}=([\![-2,2]\!],[\![-1,1]\!])$. The conventions for colors are described within Example~\ref{Ex : BaseHabituelle}.}
\label{Fig : ZeroAutomaton-1+Sqrt13-accessible}
\end{figure}
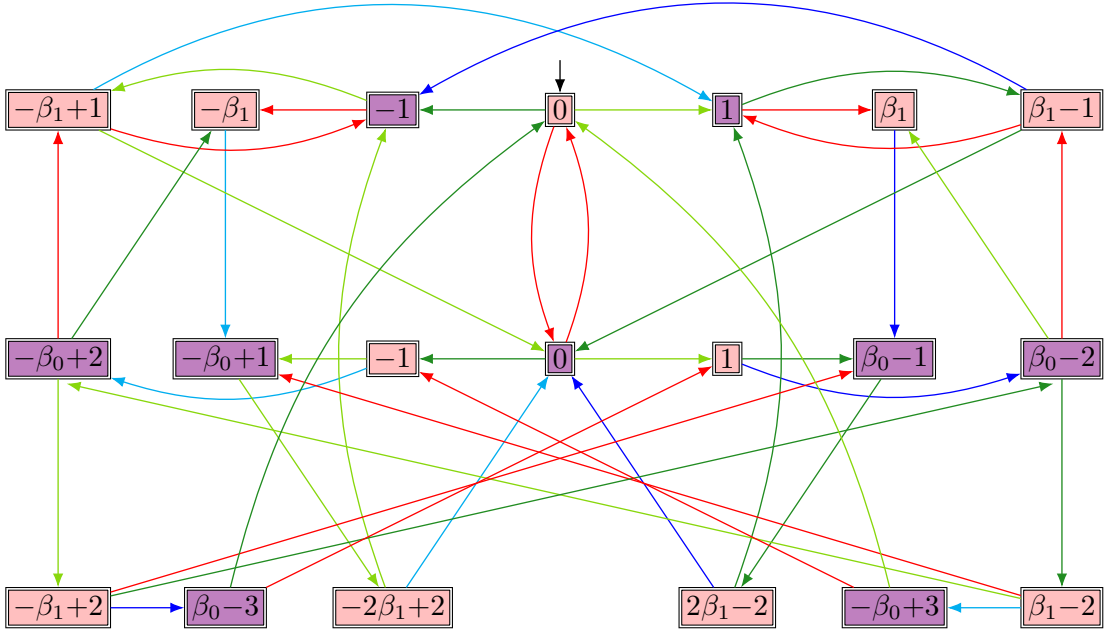
\end{example}

\begin{theorem}
\label{Thm : SetZeroSpectrumAccZeroAutomatonAlternate}
Let $\B$ be an alternate base of length $p$ and let $\boldsymbol{D}$ be a $p$-tuple of alphabets of integers containing $0$. Then the following assertions are equivalent.
\begin{enumerate}
\item The set $Z(\B,\boldsymbol{D})$ is accepted by a finite Büchi automaton.
\item The spectrum $X^{\Dig}(\pr)$ has no accumulation point in $\R$.
\item The zero automaton $\mathcal{Z}(\B,\boldsymbol{D})$ is finite.
\end{enumerate}
\end{theorem}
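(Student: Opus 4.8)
The plan is to prove the cycle of implications $(2)\Rightarrow(3)\Rightarrow(1)\Rightarrow(2)$, using the complex-base Theorem~\ref{Thm : SetZeroSpectrumAccZeroAutomatonComplex} with $\delta=\pr$ and $A=\Dig$ for the last and hardest step. The implication $(3)\Rightarrow(1)$ is immediate: by Proposition~\ref{Pro : ZeroAutoAcceptsReprOf0} the zero automaton $\mathcal{Z}(\B,\boldsymbol{D})$ always accepts $Z(\B,\boldsymbol{D})$, so if it is finite there is nothing more to prove.

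For $(2)\Rightarrow(3)$, I would first propagate the absence of accumulation points from $X(0)=X^{\Dig}(\pr)$ to every $X(i)$, $i\in\Int$. By Lemma~\ref{Lem : SpectrumAccAllSpectraAcc} we have $X(i+1)=X(i)\cdot\beta_i+D_i=\bigcup_{d\in D_i}(\beta_i X(i)+d)$, a finite union of images of $X(i)$ under the homeomorphisms $x\mapsto\beta_i x+d$ of $\R$; hence $X(i+1)$ has no accumulation point whenever $X(i)$ does, and iterating around the cycle of length $p$ shows that every $X(i)$ is free of accumulation points in $\R$. Next, each $M^{(i)}$ and $m^{(i)}$ is finite (the defining series is dominated by a convergent geometric series since every $\beta_k>1$), so $[-M^{(i)},-m^{(i)}]$ is a compact interval; thus $X(i)\cap[-M^{(i)},-m^{(i)}]$ is bounded and, being contained in $X(i)$, has no accumulation point, hence is finite. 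Therefore $Q_{\B,\boldsymbol{D}}=\bigcup_{i=0}^{p-1}\big(\{i\}\times(X(i)\cap[-M^{(i)},-m^{(i)}])\big)$ is finite, i.e.\ $\mathcal{Z}(\B,\boldsymbol{D})$ is finite.

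For $(1)\Rightarrow(2)$, I would reduce to the single complex base $\pr$ by grouping digits into blocks of length $p$. Consider the block map $\phi\colon\bigotimes_{n=0}^{+\infty}D_n\to\Dig^{\N}$ sending $a$ to the word whose $m$-th letter is $\sum_{i=0}^{p-1}a_{mp+i}\beta_{i+1}\cdots\beta_{p-1}\in\Dig$. Grouping terms $p$ by $p$ in $\val_{\B}(a)$ gives $\val_{\B}(a)=\sum_{m=0}^{+\infty}\phi(a)_m/\pr^{m+1}$, whence $Z(\B,\boldsymbol{D})=\phi^{-1}\big(Z(\pr,\Dig)\big)$; moreover $\phi$ is surjective (every letter of $\Dig$ is a value of the block map on some block of digits from $D_0,\dots,D_{p-1}$), so $Z(\pr,\Dig)=\phi\big(Z(\B,\boldsymbol{D})\big)$. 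The map $\phi$ is the composition of the bijection $\bigotimes_n D_n\to(D_0\times\cdots\times D_{p-1})^{\N}$ that reads blocks of $p$ consecutive letters with a letter-to-letter morphism onto $\Dig$, and both operations preserve the property of being accepted by a finite Büchi automaton; hence $(1)$ implies that $Z(\pr,\Dig)$ is accepted by a finite Büchi automaton. Theorem~\ref{Thm : SetZeroSpectrumAccZeroAutomatonComplex} then yields that $X^{\Dig}(\pr)$ has no accumulation point in $\C$, and since $\pr$ and all elements of $\Dig$ are real, $X^{\Dig}(\pr)\subseteq\R$, so $X^{\Dig}(\pr)$ has no accumulation point in $\R$ either, which is $(2)$.

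The step I expect to require the most care is the closure property invoked in $(1)\Rightarrow(2)$, namely turning a finite Büchi automaton for $Z(\B,\boldsymbol{D})$ into one for $Z(\pr,\Dig)$: the relabeling is routine, but the $p$-block grouping needs the standard (and slightly fiddly) adaptation of the Büchi acceptance condition, keeping track within each block of $p$ transitions of whether a final state has been visited. An alternative, avoiding Theorem~\ref{Thm : SetZeroSpectrumAccZeroAutomatonComplex}, would be to argue directly as in Lemmas~\ref{Lem : ComplexEquivalenceIIFEquality} and~\ref{Lem : NoAccEquivalenceInfinite}: an accumulation point of $X^{\Dig}(\pr)$ would produce a $\B$-representation of zero whose prefixes of lengths $\equiv 0\pmod p$ are pairwise inequivalent for the $\B$-analogue of the right congruence $\sim_{\pr,\Dig}$, contradicting acceptance of $Z(\B,\boldsymbol{D})$ by a finite Büchi automaton; this, however, requires redeveloping the right-congruence framework in the alternate setting, which the reduction above sidesteps.
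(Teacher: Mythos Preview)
Your proposal is correct and follows essentially the same route as the paper: the same cycle $(2)\Rightarrow(3)\Rightarrow(1)\Rightarrow(2)$, the same use of Lemma~\ref{Lem : SpectrumAccAllSpectraAcc} to propagate the no-accumulation property to every $X(i)$ for $(2)\Rightarrow(3)$, Proposition~\ref{Pro : ZeroAutoAcceptsReprOf0} for $(3)\Rightarrow(1)$, and for $(1)\Rightarrow(2)$ the reduction to Theorem~\ref{Thm : SetZeroSpectrumAccZeroAutomatonComplex} via $p$-block grouping. The ``slightly fiddly'' adaptation you anticipate---tracking within each block whether a final state was visited---is exactly what the paper does, by augmenting the state set to $Q\times\{f,\overline{f}\}$.
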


\begin{proof} 
By Lemma~\ref{Lem : SpectrumAccAllSpectraAcc}, if the spectrum $X^{\Dig}(\pr)$ has no accumulation point in $\R$ then for all $i\in\Int$, the spectrum $X(i)$ based on the cyclic shift $\B^{(i)}$ of the base and the corresponding shifted $p$-tuple of alphabets $\boldsymbol{D}^{(i)}$ has no accumulation point in $\R$ either. The implication $(2)\implies (3)$ then follows directly from the definition of the set of states of the zero automaton. The implication $(3)\implies (1)$ follows from Proposition~\ref{Pro : ZeroAutoAcceptsReprOf0}.

Let us show that $(1)\implies (2)$. Suppose that the set $Z(\B,\boldsymbol{D})$ is accepted by a finite Büchi automaton $\mathcal{A}=(Q,q_0,F,\cup_{i=0}^{p-1}D_i,E)$. In view of Theorem~\ref{Thm : SetZeroSpectrumAccZeroAutomatonComplex}, it suffices to construct a finite Büchi $\mathcal{B}$ automaton accepting the set $Z(\pr,\Dig)$ in order to obtain that $X^{\Dig}(\pr)$ has no accumulation point in $\R$. Consider the finite Büchi automaton 
\[
	\mathcal{B}=(Q\times\{f,\overline{f}\}, (q_0,f_0),Q\times \{f\}, \Dig, E')
\]
where $f_0=f$ if the initial state $q_0$ is final and $f_0=\overline{f}$ otherwise, and the transitions in $E'$ are defined as follows. For $q,q'\in Q$, $x,x'\in \{f,\overline{f}\}$ and $a_0\in D_0,\ldots,a_{p-1}\in D_{p-1}$, there is a transition $((q,x),\sum_{i=0}^{p-1}a_{i}\beta_{i+1}\cdots\beta_{p-1},(q',x'))$ in $E'$ if there is a path labeled by $a_0\cdots a_{p-1}$ from $q$ to $q'$ in $\mathcal{A}$ and $x'=f$ if the path in $\mathcal{A}$ goes through a final state and  $x'=\overline{f}$ otherwise. 

We prove that $\mathcal{B}$ accepts $Z(\pr,\Dig)$. Consider $b\in Z(\pr ,\Dig)$. For all $n\in \N$, there exists $a_{n,0}\in D_0,\ldots,a_{n,p-1}\in D_{p-1}$ such that $b_n=\sum_{i=0}^{p-1}a_{n,i}\beta_{i+1}\cdots\beta_{p-1}$. Clearly, the infinite word $a=(a_{0,0}\cdots a_{0,p-1})(a_{1,0}\cdots a_{1,p-1})\cdots$ belongs to $Z(\B,\boldsymbol{D})$. Hence, there exists an accepting path labeled by $a$ in $\mathcal{A}$. Let $(q_n)_{n\in \N}$ be the sequence of states of this path. Then there is a path labeled by $b$ in $\mathcal{B}$ and going through the sequence of states $((q_{np},f_n))_{n\in \N}$ where for $n\in \N_{\ge1}$, $f_n=f$ if there exists $i\in [\![1,p]\!]$ such that $q_{(n-1)p+i}\in F$ and $f_n=\overline{f}$ otherwise. Since there are infinitely many $n$ such that $q_n\in F$, we obtain that there also are infinitely many $n$ such that $f_n=f$. Thus, the path in $\mathcal{B}$ labeled by $b$ going through the states $((q_{np},f_n))_{n\in \N}$ is accepting. 

Conversely, consider an infinite word $b$ over $\Dig$ accepted by $\mathcal{B}$. Let $((q_n,f_n))_{n\in \N}$ be the sequence of states of an accepting path labeled by $b$ in $\mathcal{B}$. By definition of the automaton $\mathcal{B}$, for all $n\in \N$, there exists $a_{n,0}\in D_0,\ldots,a_{n,p-1}\in D_{p-1}$ such that $b_n=\sum_{i=0}^{p-1}a_{n,i}\beta_{i+1}\cdots\beta_{p-1}$ and a path from $q_n$ to $q_{n+1}$ in $\mathcal{A}$ labeled by $a_{n,0}\cdots a_{n,p-1}$, and moreover, there is such path  going through a final state in $\mathcal{A}$ if and only if $f_n=f$. Hence, since there exist infinitely many $n$ such that $f_n=f$, there is an accepting path labeled by $a=(a_{0,0}\cdots a_{0,p-1})(a_{1,0}\cdots a_{1,p-1})\cdots$ in $\mathcal{A}$. Since $\mathcal{A}$ accepts the set $Z(\B,\boldsymbol{D})$, we get that $\sum_{n=0}^{+\infty} \frac{b_n}{\pr^{n+1}}=\val_{\B}(a)=0$.
\end{proof}

In the proof of Theorem~\ref{Thm : SetZeroSpectrumAccZeroAutomatonAlternate}, if the Büchi automaton $\mathcal{A}$ is deterministic, it is possible that the Büchi automaton $\mathcal{B}$ is not. This is not problematic since we do not require that the set $Z(\B,\boldsymbol{D})$ is accepted by a deterministic finite Büchi automaton. However, if the map $\boldsymbol{D}\to\Dig,\ (a_0,\ldots,a_{p-1}) \mapsto \sum_{i=0}^{p-1}a_{i}\beta_{i+1}\cdots\beta_{p-1}$ is injective and $\mathcal{A}$ is deterministic then $\mathcal{B}$ is deterministic as well.

\section{Periodicity of the expansions of $1$ in alternate bases and algebraicity of the bases}
\label{Sec : NecessaryConditionForPeriodicity}

The goal of this section is to prove the following result, which gives a necessary condition on $\B$ to be a Parry alternate base, that is, to have eventually periodic $\B^{(i)}$-expansions of $1$ for all $i\in\Int$. 

\begin{theorem}
\label{Thm : AlleventuallyPeriodicAlgebraicField}
If $\DBi{i}(1)$ is eventually periodic for all $i\in \Int$, then $\pr$ is an algebraic integer and $\beta_i \in \Q(\pr)$ for all $i\in \Int$.
\end{theorem}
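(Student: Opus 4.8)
The plan is to exploit the eventual periodicity of each $\DBi{i}(1)$ to set up a system of algebraic equations relating the bases $\beta_0,\ldots,\beta_{p-1}$ and the product $\pr$. First I would fix $i\in\Int$ and write $\DBi{i}(1) = u^{(i)} (v^{(i)})^\omega$ for finite words $u^{(i)}, v^{(i)}$. Recall that $\DBi{i}(1)$ is a $\B^{(i)}$-representation of $1$, so that $\val_{\B^{(i)}}(u^{(i)}(v^{(i)})^\omega) = 1$. Grouping the digits $p$ by $p$ exactly as in the passage from~\eqref{Eq : valueAlternatBase} to the $\pr$-representation over $\Dig$, the eventually periodic word $u^{(i)}(v^{(i)})^\omega$ yields (after possibly padding $u^{(i)}$ and repeating $v^{(i)}$ so both have length a multiple of $p$) an expression of $1$ as a $\pr$-representation whose tail is purely periodic. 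Summing the geometric series coming from the periodic part, this gives an equation of the form
\[
	1 = P_i(\beta_0,\ldots,\beta_{p-1}) + \frac{Q_i(\beta_0,\ldots,\beta_{p-1})}{\pr^{k_i}(\pr^{\ell_i}-1)}
\]
where $P_i, Q_i$ are expressions that are polynomial in the $\beta_j$ (coming from the prefix and one period of the periodic part respectively, with the $\beta_{j+1}\cdots\beta_{p-1}$ type coefficients), and $k_i, \ell_i$ are the lengths of prefix and period divided by $p$.

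The key structural observation is then that clearing denominators turns each such equation into a polynomial relation with integer coefficients that is monic, or can be made monic, in a suitable sense. Concretely, I would first handle the case $i=0$: multiplying through by $\pr^{k_0}(\pr^{\ell_0}-1)$ and using $\pr = \beta_0\cdots\beta_{p-1}$, one obtains a relation showing $\pr$ is a root of a monic polynomial with integer coefficients, hence $\pr$ is an algebraic integer. The point is that the highest power of $\pr$ appearing comes with coefficient $1$ (from the term $\pr^{k_0+\ell_0}$ on the left after clearing), while all other terms are integer combinations of lower powers of $\pr$ — here one uses that the digits are non-negative integers and that the coefficients $\beta_{j+1}\cdots\beta_{p-1}$, once multiplied by the right power of $\pr$, become products of the $\beta_j$'s that reorganize into integer powers of $\pr$ only in the leading term. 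This requires a careful bookkeeping of which monomials in the $\beta_j$ survive; the honest way is to observe that $\val_{\B^{(i)}}$ applied to a purely periodic word of period length a multiple of $p$ lies in $\Z[\beta_0,\ldots,\beta_{p-1}]$ localized at $\pr^{\ell}-1$, and that $\pr$ satisfies $\pr \cdot 1 = \beta_0\cdots\beta_{p-1}$.

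Next, to show $\beta_i\in\Q(\pr)$, I would use the equations for the shifted bases together with the relation $\beta_i \cdot \val_{\B^{(i+1)}}(w) = \val_{\B^{(i)}}(?w) $-type identities, i.e. Lemma-style manipulations of how $\val$ transforms under cyclic shifts. More directly: from $\val_{\B^{(i)}}(\DBi{i}(1))=1$ one can solve for $\beta_i$ rationally in terms of $\beta_{i+1},\ldots,\beta_{i+p-1}$ and $\pr$, by isolating the leading digit $\varepsilon_0$ of $\DBi{i}(1)$; iterating, every $\beta_j$ is expressed as an element of $\Q(\pr,\beta_{i+1},\ldots)$, and unwinding the cyclic system of $p$ such relations collapses everything into $\Q(\pr)$. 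The cleanest route is probably: show $\beta_0\cdots\beta_j \in \Q(\pr)$ for all $j$ by downward induction (it holds for $j=p-1$ since that product is $\pr$), using the equation for $\DB(1)$ to get $\beta_0 = \varepsilon_0(1) + r_0 \in \Q(\pr)$ once one knows the remainder $r_0 = \val_{\B^{(1)}}(\text{tail})$ lies in $\Q(\pr)$ — which in turn follows from the periodicity equation for $\B^{(1)}$ combined with $\beta_1\cdots\beta_{p-1}\in\Q(\pr)$.

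The main obstacle I anticipate is the bookkeeping in the algebraicity part: making rigorous that clearing denominators in the periodicity equation produces a \emph{monic} integer polynomial for $\pr$, rather than merely an integer polynomial (which would only give that $\pr$ is algebraic, not an algebraic integer). The resolution should be that the leading coefficient, after collecting the highest power of $\pr$, is exactly $1$ because it comes solely from the ``$1 \cdot \pr^{k_0+\ell_0}$'' contribution with no digit attached, whereas every digit-carrying term is of strictly lower degree in $\pr$; one has to be careful that the mixed monomials $\beta_{j+1}\cdots\beta_{p-1}\cdot\pr^m$ never produce a $\pr$-power as large as $k_0+\ell_0$, which holds because each such monomial is bounded above by $\pr^{m+1}$ in degree. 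Establishing this degree bound cleanly — perhaps by working with $\val_{\B^{(i)}}$ of finite words and an explicit length count — is the crux.
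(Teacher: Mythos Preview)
Your plan has a genuine gap in both halves. For the algebraic-integer part, you propose to clear denominators in the single equation coming from $i=0$ and claim this yields a monic polynomial in $\pr$ with integer coefficients. But after clearing, the equation reads
\[
   \pr^{m_0}(\pr^{k_0}-1)=\sum_{j=0}^{p-1} g_{0,j}(\pr)\,\beta_{j+1}\cdots\beta_{p-1},
\]
and the right-hand side still carries the partial products $\beta_{j+1}\cdots\beta_{p-1}$, which are \emph{not} integer polynomials in $\pr$; the substitution $\pr=\beta_0\cdots\beta_{p-1}$ does not eliminate them. Your degree argument (``digit-carrying terms are of strictly lower degree in $\pr$'') is not meaningful here because those terms are not polynomials in $\pr$ at all. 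What the paper does instead is use \emph{all} $p$ equations simultaneously: after multiplying the $i$th equation by $\beta_i\cdots\beta_{p-1}$ and substituting $\beta_0\cdots\beta_{p-1}=\pr$, every equation becomes linear in the same $p$-vector $(\beta_1\cdots\beta_{p-1},\,\beta_2\cdots\beta_{p-1},\,\ldots,\,\beta_{p-1},\,1)^T$, giving a homogeneous system $(M(\pr)-\pr^m(\pr^k-1)I_p)\vec v=0$ with $M$ having entries in $\Z[\pr]$. The existence of the nonzero solution $\vec v$ forces $\det(M(y)-y^m(y^k-1)I_p)$ to vanish at $\pr$, and a degree count on this determinant (now legitimately a polynomial in $y$ alone) shows it has leading coefficient $(-1)^p$.

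For the second half, your inductive scheme is circular: to get $\beta_0\in\Q(\pr)$ from the $i=0$ equation you need all the partial products $\beta_1\cdots\beta_{p-1},\ldots,\beta_{p-1}$ already in $\Q(\pr)$, and the equations for other $i$'s involve cyclically shifted products, not the same ones. Even after reducing to a common set of unknowns as above, you still need to know that the linear system over $\Q(\pr)$ determines $\vec v$ uniquely (up to scaling), i.e.\ that $M(\pr)-\pr^m(\pr^k-1)I_p$ has rank exactly $p-1$. The paper obtains this from the Perron--Frobenius theorem: $M(\pr)$ is a non-negative irreducible matrix (positivity of the superdiagonal comes from the first digits $\lfloor\beta_i\rfloor\ge 1$), so its Perron eigenvalue $\pr^m(\pr^k-1)$ is simple and the corresponding eigenvector is unique up to scale; solving the rank-$(p-1)$ system then forces each component of $\vec v$ into $\Q(\pr)$. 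This rank/uniqueness step is the missing idea in your outline.
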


In order to give intuition on the algebraic techniques that will be used in the proof, we start with an example.

\begin{example} 
\label{Ex : 3PhiPhi} 
Let $\boldsymbol{\beta}=(\beta_0,\beta_1,\beta_2)$ be a base such that the expansions of 1 are given by
\begin{equation}
\label{Eq : expansion3fifi}
  d_{\boldsymbol{\beta}}(1)=30^\omega,\quad
  d_{\boldsymbol{\beta^{(1)}}}(1)=110^\omega,\quad
  d_{\boldsymbol{\beta^{(2)}}}(1)=1(110)^\omega.
\end{equation}
We easily derive that $\beta_0,\beta_1,\beta_2$ satisfy the following set of equations
\[
	\frac{3}{\beta_0}=1,\quad 
	\frac{1}{\beta_1}+\frac{1}{\beta_1\beta_2}=1,\quad
	\frac{1}{\beta_2} + \left(\frac{1}{\beta_2\beta_0} + \frac{1}{\delta}\right)\frac{\delta}{\delta-1}=1,
\]
where $\delta=\beta_0\beta_1\beta_2$. Multiplying the first equation by $\delta$, the second one by $\beta_1\beta_2$ and the third one by $(\delta-1)\beta_2$, we obtain identities
\[
	3\beta_1\beta_2-\delta=0,\quad
	-\beta_1\beta_2+\beta_2+1=0,\quad
	\beta_1\beta_2+(2-\delta)\beta_2+\delta-1=0.
\]
In a matrix formalism, we have
\begin{equation}
\label{Eq : M3fifi}
\left(\begin{smallmatrix}
   3 	& 0 			& -\delta \\
   -1 	& 1 			& 1 \\
   1 	& 2-\delta 	& \delta-1
 \end{smallmatrix}\right)
 \left(\begin{smallmatrix}
   \beta_1\beta_2 \\
   \beta_2 \\
   1
 \end{smallmatrix}\right) = \left(\begin{smallmatrix}
   0 \\
   0 \\
   0
 \end{smallmatrix}\right).
\end{equation}
The existence of a non-zero vector $(\beta_1\beta_2,\beta_2,1)^T$ as a solution of this equation forces that the determinant of the coefficient matrix is zero, that is, that $ \delta^2-9\delta +9=0$. Hence we must have $\delta= \frac{9+3\sqrt5}{2}=3\varphi^2$ where $\varphi=\frac{1+\sqrt5}{2}$ is the golden ratio. Solving~\eqref{Eq : M3fifi} for this $\delta$, we obtain $\beta_1\beta_2=\frac{\delta}{3}=\varphi^2$ and $\beta_2=\beta_1\beta_2-1=\varphi^2-1=\varphi$, and finally $\beta_1=1+\frac{1}{\varphi}=\varphi$. Consequently, $\beta_0=\frac{\delta}{\beta_1\beta_2}=3$. Indeed, the triple ${\boldsymbol{\beta}}=(3,\varphi,\varphi)$ is an alternate base giving precisely~\eqref{Eq : expansion3fifi} as the expansions of 1, as already observed in~\cite{CharlierCisternino2021}.
\end{example}

For obtaining the values $\beta_0,\beta_1,\beta_2$ from the known eventually periodic expansions we have used the fact that $\beta_0,\beta_1,\beta_2$ and $\delta=\beta_0\beta_1\beta_2$ are solutions of a system of polynomial equations in four unknowns $x_0,x_1,x_2,y$, in our case
\[
	\left\{
	\begin{array}{rcl}
	3x_1x_2-y&=&0\\
	-x_1x_2+x_2+1&=&0\\
	x_1x_2 + (2-y)x_2+y-1&=&0\\
	x_1x_2x_3&=&y.
	\end{array}
	\right.
\]
The solution of the system yielded that $\delta$ is a root of a monic polynomial with integer coefficient, i.e., is an algebraic integer. The same strategy can be applied to any alternate basis where all the expansions $d_{\boldsymbol{\beta^{(i)}}}(1)$ are eventually periodic.

Recall that in case of classical R\'enyi expansion of $1$ with a single basis, it is easy to show that if $d_\beta(1)$ is eventually periodic (i.e., if $\beta$ is a Parry number) then $\beta$ is an algebraic integer. The monic polynomial with integer coefficient having $\beta$ as a root is called the Parry polynomial of $\beta$.

In the proof of Theorem~\ref{Thm : AlleventuallyPeriodicAlgebraicField}, we will work with formal power series whose coefficients are given by eventually periodic sequences. Let us prepare explicit form of these sums.

For given $m\in \N$, $k\in\N_{\ge1}$, we define $P_{m,k}$ as the set of polynomials in $\Z[y]$ of degree at most $m+k-1$ of the form 
\begin{align}
\label{eq:polynomialG}
   (y^k-1)(\sum_{n=0}^{m-1}a_ny^{m-1-n}) + \sum_{n=0}^{k-1}a_{m+n}y^{k-1-n}
\end{align}
where $a_0,\ldots,a_{m+k-1}\in\Z$. We say that the polynomial~\eqref{eq:polynomialG} is \emph{associated with the integers $a_0,\ldots,a_{m+k-1}$}. Note that this polynomial has maximal degree $m+k-1$ if $a_0\ne 0$.

\begin{lemma}
\label{Lem : PolyAssociatedWithUltPerWord}
Let $a$ be an eventually periodic sequence of integers with preperiod $m\in \N$ and period $k\in \N_{\ge1}$, that is, $a=a_0a_1\cdots a_{m-1}(a_ma_{m+1}\cdots a_{m+k-1})^\omega$. Then 
\[
  \sum_{n=0}^{+\infty}\frac{a_n}{y^{n+1}}
  =\frac{g}{y^m(y^k-1)}
\]
where $g$ is the polynomial in $P_{m,k}$ associated with the integers $a_0,\ldots,a_{m+k-1}$.
\end{lemma}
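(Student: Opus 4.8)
The plan is to prove this by a direct computation splitting the eventually periodic sequence into its preperiodic part and its periodic tail, and then summing a geometric series. First I would write
\[
  \sum_{n=0}^{+\infty}\frac{a_n}{y^{n+1}}
  = \sum_{n=0}^{m-1}\frac{a_n}{y^{n+1}}
  + \sum_{j=0}^{+\infty}\sum_{n=0}^{k-1}\frac{a_{m+n}}{y^{m+jk+n+1}}
  = \sum_{n=0}^{m-1}\frac{a_n}{y^{n+1}}
  + \frac{1}{y^m}\Bigl(\sum_{n=0}^{k-1}\frac{a_{m+n}}{y^{n+1}}\Bigr)\sum_{j=0}^{+\infty}\frac{1}{y^{jk}},
\]
where the rearrangement of the doubly-indexed sum is justified by absolute convergence for $|y|>1$ (and, at the level of formal power series in $y^{-1}$, is simply the identity we are after). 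Recognizing the last factor as the geometric series $\sum_{j\ge 0} y^{-jk} = \frac{y^k}{y^k-1}$ gives
\[
  \sum_{n=0}^{+\infty}\frac{a_n}{y^{n+1}}
  = \sum_{n=0}^{m-1}\frac{a_n}{y^{n+1}}
  + \frac{y^k}{y^m(y^k-1)}\sum_{n=0}^{k-1}\frac{a_{m+n}}{y^{n+1}}.
\]

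Next I would put everything over the common denominator $y^m(y^k-1)$. The first sum $\sum_{n=0}^{m-1} a_n y^{-(n+1)}$ becomes $\frac{(y^k-1)\sum_{n=0}^{m-1} a_n y^{m-1-n}}{y^m(y^k-1)}$, since multiplying $y^{-(n+1)}$ by $y^m$ yields $y^{m-1-n}$. For the second term, $\frac{y^k}{y^m(y^k-1)}\sum_{n=0}^{k-1}a_{m+n}y^{-(n+1)} = \frac{\sum_{n=0}^{k-1}a_{m+n}y^{k-1-n}}{y^m(y^k-1)}$, since $y^k \cdot y^{-(n+1)} = y^{k-1-n}$. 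Adding the two numerators gives exactly
\[
  (y^k-1)\Bigl(\sum_{n=0}^{m-1}a_n y^{m-1-n}\Bigr) + \sum_{n=0}^{k-1}a_{m+n}y^{k-1-n},
\]
which is precisely the polynomial of the form~\eqref{eq:polynomialG} associated with the integers $a_0,\ldots,a_{m+k-1}$, hence an element of $P_{m,k}$. This establishes the claimed identity $\sum_{n=0}^{+\infty} a_n y^{-(n+1)} = \frac{g}{y^m(y^k-1)}$.

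There is no real obstacle here; the only point requiring a word of care is the legitimacy of the rearrangement, which one can handle either by noting absolute convergence of $\sum a_n y^{-(n+1)}$ for all complex $y$ with $|y|>1$ (the coefficients $a_n$ are bounded, being eventually periodic), or by observing that the equality holds as an identity of formal Laurent series in $y^{-1}$, which is the form in which it will actually be used in the proof of Theorem~\ref{Thm : AlleventuallyPeriodicAlgebraicField}. I would state it for $|y|>1$ and remark that it is then an identity of rational functions.
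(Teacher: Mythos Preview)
Your proof is correct and is exactly the straightforward verification the paper alludes to; the paper's own proof consists of the single sentence ``This is a straightforward verification.'' There is nothing to add.
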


\begin{proof}
This is a straightforward verification.
\end{proof}

\begin{lemma}
\label{Lem : PolyAssociatedWithUltPerExpansion}
Suppose that $1$ has an eventually periodic $\B$-representation $a$ of preperiod $mp$ and period $kp$ with $m\in \N$ and $k\in\N_{\ge1}$. Then 
\[
	\pr^m(\pr^k-1) 
  	= \sum_{j=0}^{p-1}g_j(\pr)\beta_{j+1}\cdots\beta_{p-1}
\]
where for each $j\in\Int$, $g_j$ is the polynomial in $P_{m,k}$ associated with the integers $a_j,a_{j+p}\ldots,a_{j+(m+k-1)p}$. 
\end{lemma}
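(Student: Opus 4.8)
The plan is to expand the single-variable identity provided by Lemma~\ref{Lem : PolyAssociatedWithUltPerWord} by grouping the terms of the $\B$-representation $p$ by $p$, mimicking the manipulation done in Example~\ref{Ex : 3PhiPhi} and in the discussion preceding Equality~\eqref{Eq : Digits}. Concretely, since $a$ is a $\B$-representation of $1$ with preperiod $mp$ and period $kp$, we have $\val_{\B}(a) = \sum_{n=0}^{+\infty} a_n / \prod_{k=0}^n \beta_k = 1$. For each $j\in\Int$, consider the subsequence $a^{(j)} = a_j a_{j+p} a_{j+2p}\cdots$; it is eventually periodic with preperiod $m$ and period $k$, so by Lemma~\ref{Lem : PolyAssociatedWithUltPerWord} (applied with the formal variable $y$ replaced by $\pr$, which is legitimate since $|\pr|>1$ guarantees convergence),
\[
	\sum_{n=0}^{+\infty}\frac{a_{j+np}}{\pr^{n+1}} = \frac{g_j(\pr)}{\pr^m(\pr^k-1)},
\]
where $g_j\in P_{m,k}$ is associated with the integers $a_j, a_{j+p},\ldots, a_{j+(m+k-1)p}$.

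Next I would reassemble these $p$ partial sums into $\val_{\B}(a)$. Writing $n = np' + j$ with $j\in\Int$ and regrouping, the defining series becomes
\[
	1 = \val_{\B}(a)
	= \sum_{j=0}^{p-1} \frac{1}{\beta_0\cdots\beta_j}\sum_{n=0}^{+\infty}\frac{a_{j+np}}{(\beta_0\cdots\beta_{p-1})^{n}}
	= \sum_{j=0}^{p-1}\frac{1}{\beta_0\cdots\beta_j}\sum_{n=0}^{+\infty}\frac{a_{j+np}\,\beta_0\cdots\beta_{p-1}}{\pr^{n+1}},
\]
where I have used $\pr = \beta_0\cdots\beta_{p-1}$; simplifying the constant, the $j$-th inner sum is $\beta_{j+1}\cdots\beta_{p-1}\sum_{n\ge 0} a_{j+np}\pr^{-(n+1)}$. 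Substituting the closed form from the previous paragraph gives
\[
	1 = \sum_{j=0}^{p-1}\beta_{j+1}\cdots\beta_{p-1}\cdot\frac{g_j(\pr)}{\pr^m(\pr^k-1)}.
\]
Multiplying both sides by $\pr^m(\pr^k-1)$ yields exactly the claimed identity $\pr^m(\pr^k-1) = \sum_{j=0}^{p-1} g_j(\pr)\,\beta_{j+1}\cdots\beta_{p-1}$.

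The only genuinely non-routine point is checking that the regrouping of the doubly-indexed series is valid, i.e.\ that the rearrangement of $\sum_n a_n/\prod_{k\le n}\beta_k$ into the $p$ blocks is legitimate. This is standard: since $|\beta_i|>1$ for all $i$, the products $\prod_{k=0}^n\beta_k$ grow geometrically, so the series converges absolutely, and any rearrangement — in particular the partition of $\N$ into residue classes modulo $p$ — converges to the same value. Everything else is the bookkeeping already carried out in Section~\ref{Sec : SpectrumAlternate} (the passage from~\eqref{Eq : valueAlternatBase} to the grouped form) together with Lemma~\ref{Lem : PolyAssociatedWithUltPerWord}, so the proof is essentially a direct computation; I would present it as such.
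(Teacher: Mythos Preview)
Your argument is correct and follows essentially the same route as the paper: regroup the series $\val_{\B}(a)=1$ into the $p$ residue classes modulo $p$, apply Lemma~\ref{Lem : PolyAssociatedWithUltPerWord} to each eventually periodic subsequence $(a_{np+j})_{n\in\N}$, and clear the common denominator $\pr^m(\pr^k-1)$. The paper's proof is just a terser version of exactly this computation.
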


\begin{proof}
Rewrite~\eqref{Eq : valueAlternatBase} as
\[
  	1= \sum_{j=0}^{p-1} \sum_{n=0}^{+\infty} \frac{a_{np+j}}{\pr^{n+1}} \beta_{j+1}\cdots\beta_{p-1}.
\]
Since for every $j\in \Int$, the sequence $(a_{np+j})_{n\in\N}$ is eventually periodic with preperiod $m$ and period $k$, the result follows from Lemma~\ref{Lem : PolyAssociatedWithUltPerWord}.
\end{proof}

Whenever all $p$ expansions $\DBi{i}(1)$ are eventually periodic, for $i\in\Int$, we associate a system of polynomial equations, which we call the \emph{$\B$-polynomial system} by analogy to the \emph{$\beta$-polynomial} for real bases $\beta$~\cite{Parry1960}. We do this as follows. Without loss of generality, we suppose that for all $i\in \Int$, the expansion $\DBi{i}(1)$ has a preperiod $m_ip$ and a period $k_ip$ with $m_i\in\N$ and $k_i\in \N_0$. Then, for all $i\in\Int$, we let $g_{i,0},g_{i,1},\dots,g_{i,p-1}$ be the associated polynomials in $P_{m_i,k_i}$ as in Lemma~\ref{Lem : PolyAssociatedWithUltPerExpansion}, so that
\[
  	\pr^{m_i}(\pr^{k_i}-1) 
  	= \sum_{j=0}^{p-1}g_{i,j}(\pr)\beta_{i+j+1}\cdots\beta_{i+p-1}.
\]
For each $i\in\Int$, since the first digit of $\DBi{i}(1)$ is $\floor{\beta_i}\ge 1$, the degree of $g_{i,0}$ is $k_i+m_i-1$. The \emph{$\B$-polynomial system} is the system of $p+1$ polynomial equations in $p+1$ variables $x_0,x_1,\ldots,x_{p-1},y$ given by
\begin{equation}
\label{Eq : BPolynomials}
\begin{cases}
y^{m_i}(y^{k_i}-1)= 	\displaystyle{\sum_{j=0}^{p-1}}g_{i,j}x_{i+j+1}\cdots x_{i+p-1}, & \text{for } i\in\Int \\
y=\displaystyle{\prod_{i=0}^{p-1}}x_i
\end{cases}
\end{equation}
where, as usual, we use the convention $x_n=x_{n\bmod p}$ for $n\in \Z$. By construction, the $p$-tuple $(\beta_0,\ldots,\beta_{p-1},\pr)$ is a solution of this system.

\begin{example}
We resume Example~\ref{Ex : 3PhiPhi}. By writing each of the expansions from~\eqref{Eq : expansion3fifi} with a preperiod $3$ and a period $3$, that is, 
\[
	\DBi{0}(1)=300(000)^\omega, 
	\quad \DBi{1}(1)=110(000)^\omega 
	\quad \text{and}\quad \DBi{2}(1)=111(011)^\omega,
\]
we get $g_{0,0}=3(y-1)$, $g_{1,0}=g_{1,1}=g_{2,0}=y-1$, $g_{2,1}=g_{2,2}=y$ and $g_{0,1}=g_{0,2}=g_{1,2}=0$. The associated $\B$-polynomial system is
\[
	\begin{cases}
	y(y-1)=3(y-1)x_1x_2\\
	y(y-1)=(y-1)x_2x_0+(y-1)x_0\\
	y(y-1)=(y-1)x_0x_1 + y x_1+y\\
	y=x_0x_1x_2.
	\end{cases}
\]
By multiplying the second equation by $x_1x_2$ and the third one by $x_2$ and by substituting $x_0x_1x_2$ by $y$, we get the three equations
\[
	\begin{cases}
	y(y-1)=3(y-1)x_1x_2\\
	y(y-1)x_1x_2=y(y-1)x_2+y(y-1)\\
	y(y-1)x_2=(y-1)y + y x_1x_2+y x_2.
	\end{cases}
\]
Placing the first equation in the last line, this can be rewritten as
\[
	\begin{pmatrix}
	-y(y-1)	& y(y-1) 	& y(y-1) \\
	y		& y-y(y-1)	& y(y-1) \\
	3(y-1)	& 0 			& -y(y-1)
	\end{pmatrix}
	\begin{pmatrix}
	x_1x_2\\
	x_2\\
	1
	\end{pmatrix}=
	\begin{pmatrix}
	0\\0\\0
	\end{pmatrix}.
\]
The matrix of this system is equal to
$M(y)-y(y-1) I_3$ where
\[
	M(y)
	=\begin{pmatrix}
	g_{1,2}	& yg_{1,0}	& yg_{1,1}\\
	g_{2,1}	& g_{2,2}	& yg_{2,0}\\
	g_{0,0}	& g_{0,1} 	& g_{0,2}
	\end{pmatrix}
\]
and $I_3$ is the identity matrix of size $3$.
\end{example}

\begin{proof}[Proof of Theorem~\ref{Thm : AlleventuallyPeriodicAlgebraicField}]
Let $m\in \N$ and $k\in \N_0$ be such that the expansions $\DBi{i}(1)$ all have preperiod $mp$ and period $kp$, for $i\in \Int$. Then we consider the associated polynomial system~\eqref{Eq : BPolynomials}, where $m_i=m$ and $k_i=k$ for all $i\in\Int$. We index the equations of this system from $0$ to $p$. For each $i\in [\![1,p-1]\!]$, we multiply the $i$th equation by $\prod_{k=i}^{p-1}x_k$, which becomes
\[
	y^{m}(y^k-1)\prod_{k=i}^{p-1}x_k
	= \sum_{j=0}^{p-1}(g_{i,j}\prod_{k=i+j+1}^{2p-1}x_k).
\]
By substituting $x_0\cdots x_{p-1}$ by $y$, the latter equation can be rewritten as
\[
	y^{m}(y^k-1)\prod_{k=i}^{p-1}x_k
	= \sum_{j=0}^{p-i-1}(y g_{i,j} \prod_{k=i+j+1}^{p-1}x_k)
	+\sum_{j=p-i}^{p-1}(g_{i,j} \prod_{k=i+j+1-p}^{p-1}x_k).
\]
Now, the first $p$ equations of the system can be written in the matrix form
\begin{equation}
\label{Eq : SystemForAlgebraic}
(M(y)-y^m(y^k-1)I_p) \overrightarrow{v}(x_1,\ldots,x_{p-1})=\vec{0}
\end{equation}
where $I_p$ is the identity matrix of size $p$, $\vec{0}$ is the zero column vector of size $p$, 
\[
	\overrightarrow{v}(x_1,\ldots,x_{p-1})
	=\begin{pmatrix}
	x_1x_2\cdots x_{p-1}\\ 
	x_2\cdots x_{p-1}\\ 
	\vdots\\
	x_{p-1}\\
	1
	\end{pmatrix}
\]
and 
\[
	M(y)=
	\begin{pmatrix}
	g_{1,p-1} & yg_{1,0}	  & \cdots &	yg_{1,p-3} & yg_{1,p-2} \\
	g_{2,p-2} & g_{2,p-1} & \cdots & yg_{2,p-4} & yg_{2,p-3} \\
	\vdots   & \vdots 	& \ddots & \vdots & \vdots \\
	g_{p-1,1} 	& g_{p-1,2}   	& \cdots & g_{p-1,p-1}   & yg_{p-1,0} \\
	g_{0,0}   	& g_{0,1}     	& \cdots & g_{0,p-2}     & g_{0,p-1}
	\end{pmatrix}
\]
Since $(\beta_0,\ldots,\beta_{p-1},\delta)$ is a non trivial solution of the original system, we get that $\delta$ is a root of the polynomial $h=\det(M(y)-y^m(y^k-1)I_p)$ of $\Z[y]$. By construction, for every $i,j\in\Int$, the polynomial $g_{i,j}$ has degree at most $m+k-1$. Therefore, the highest degree of $h$ is obtained from the product $\prod_{i=0}^{p-1}(g_{i,p-1}{-}y^{m}(y^k {-}1))$. This shows that $h$ has leading coefficient $(-1)^p$. Since $\pr$ is a root of $h$, we get that $\pr$ is an algebraic integer. 

It remains to prove that $\beta_i\in \Q(\pr)$ for all $i\in \Int$. To that purpose, we will apply the famous Perron-Frobenius theorem. First, thanks to Lemma~\ref{Lem : PolyAssociatedWithUltPerWord}, we know that the matrix $M(\pr)$ has non-negative entries. Then, by Lemma~\ref{Lem : PolyAssociatedWithUltPerExpansion} and since any $\B$-expansion starts with a non-zero digit, the entries $\pr g_{1,0}(\pr)$, $\pr g_{2,0}(\pr),\ldots,\pr g_{p-1,0}(\pr)$, $g_{0,0}(\pr)$ of $M(\pr)$ in respective positions $(0,1)$, $(1,2),\ldots,(p-2,p-1)$, $(p-1,0)$ are positive. Therefore, the matrix $M(\pr)$ is irreducible. By the Perron-Frobenius theorem, the vector $\overrightarrow{v}(\beta_1,\ldots,\beta_{p-1})$ is the unique positive eigenvector of $M(\pr)$ having $1$ as its last entry and the corresponding eigenvalue $\pr^m(\pr^k-1)$ is the Perron-Frobenius eigenvalue of $M(\pr)$. Moreover, the rank of the matrix $M(\pr)-\pr^m(\pr^k-1)I$ is $p-1$. Thus, the corresponding linear system in the unknowns $c_1=x_1x_2\cdots x_{p-1}$, $c_2=x_2\cdots x_{p-1},\ldots,c_{p-1}=x_{p-1}$ is equivalent to that obtained by deleting one its $p$ equations. The obtained system has full rank $p-1$. Since all entries of $M(\pr)-\pr^m(\pr^k-1)I$ belong to the field $\Q(\pr)$, any solution vector of the latter system has components $c_i$ in $\Q(\pr)$. Hence, the products $\beta_1\beta_2\cdots \beta_{p-1}, \beta_2\cdots \beta_{p-1}, \ldots, \beta_{p-1}$ all belong to $\Q(\pr)$. We obtain in turn that $\beta_1,\ldots,\beta_{p-1}\in\Q(\pr)$. Since moreover $\beta_0=\pr/(\beta_1\cdots\beta_{p-1})$, we also get that $\beta_0\in\Q(\pr)$.
\end{proof}

Let us emphasize that the greediness of the representations was not necessary in the proof of Theorem~\ref{Thm : AlleventuallyPeriodicAlgebraicField}. We only need that each $\B^{(i)}$-representation of $1$ starts with a non-zero digit. Therefore, we have actually proved the following stronger result.

\begin{theorem}
If $1$ has eventually periodic $\B^{(i)}$-representations for all $i\in\Int$, then $\pr$ is an algebraic integer. If moreover these $p$ representations have non-negative digits and they all start with a non-zero digit, then $\beta_0,\ldots,\beta_{p-1} \in \Q(\pr)$.
\end{theorem}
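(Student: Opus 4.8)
The plan is to observe that the entire argument in the proof of Theorem~\ref{Thm : AlleventuallyPeriodicAlgebraicField} goes through under these weaker hypotheses, and to pin down exactly which part of it uses which hypothesis. Denote by $a^{(0)},\dots,a^{(p-1)}$ the $p$ given eventually periodic representations of $1$, where $a^{(i)}$ is a $\B^{(i)}$-representation. First I would, as usual, lengthen preperiods and periods so that every $a^{(i)}$ has the common preperiod $mp$ ($m\in\N$) and common period $kp$ ($k\ge 1$), and then apply Lemma~\ref{Lem : PolyAssociatedWithUltPerExpansion} to each $a^{(i)}$. This yields, for every $i\in\Int$, polynomials $g_{i,0},\dots,g_{i,p-1}\in\Z[y]$ of degree at most $m+k-1$ with $\pr^m(\pr^k-1)=\sum_{j=0}^{p-1}g_{i,j}(\pr)\,\beta_{i+j+1}\cdots\beta_{i+p-1}$; this step uses neither positivity of the digits nor that a leading digit is non-zero.

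Next, performing exactly the manipulations of the proof of Theorem~\ref{Thm : AlleventuallyPeriodicAlgebraicField} (multiplying the $i$-th relation, $i\in[\![1,p-1]\!]$, by $\prod_{k=i}^{p-1}x_k$ and substituting $x_0\cdots x_{p-1}=y$) rewrites the $p$ relations as $(M(y)-y^m(y^k-1)I_p)\,\overrightarrow{v}(x_1,\dots,x_{p-1})=\vec{0}$ with the same $M(y)$ and $\overrightarrow{v}$ as there. This system is solved by $(\beta_0,\dots,\beta_{p-1},\pr)$, and $\overrightarrow{v}(\beta_1,\dots,\beta_{p-1})$ has last entry $1$, hence is non-zero; so $h:=\det(M(y)-y^m(y^k-1)I_p)\in\Z[y]$ vanishes at $\pr$. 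A degree count in the Leibniz expansion shows that the top-degree term $\pm y^{p(m+k)}$ comes only from the identity permutation, through $\prod_{i=0}^{p-1}(g_{i,p-1}-y^m(y^k-1))$, so $h\ne 0$ has leading coefficient $(-1)^p$; therefore $\pr$ is an algebraic integer. This establishes the first assertion, with no assumption on the signs or the leading digits.

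For the second assertion I would invoke the two extra hypotheses. If every $a^{(i)}$ has non-negative digits, then for all $i,j$ the sub-series $\sum_{n\ge 0}a^{(i)}_{np+j}\pr^{-(n+1)}$ is $\ge 0$, so by Lemma~\ref{Lem : PolyAssociatedWithUltPerWord} and $\pr^m(\pr^k-1)>0$ we get $g_{i,j}(\pr)\ge 0$, i.e.\ $M(\pr)$ has non-negative entries; and if moreover each $a^{(i)}$ starts with a non-zero (hence, under non-negativity, $\ge 1$) digit, the sub-series with $j=0$ is $\ge \pr^{-1}>0$, so $g_{i,0}(\pr)>0$ for all $i$, which puts a strictly positive entry in each of the cyclic positions $(0,1),(1,2),\dots,(p-2,p-1),(p-1,0)$ of $M(\pr)$, making it irreducible. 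Since $\overrightarrow{v}(\beta_1,\dots,\beta_{p-1})$ is a strictly positive eigenvector of $M(\pr)$ (all $\beta_i>1$), the Perron-Frobenius theorem forces $\pr^m(\pr^k-1)$ to be the Perron eigenvalue, which is simple, so $M(\pr)-\pr^m(\pr^k-1)I_p$ has rank $p-1$. The associated homogeneous system has all coefficients in $\Q(\pr)$, so together with the normalization that the last coordinate is $1$ it has a unique solution with all coordinates in $\Q(\pr)$; hence $\beta_1\cdots\beta_{p-1},\ \beta_2\cdots\beta_{p-1},\ \dots,\ \beta_{p-1}\in\Q(\pr)$, whence $\beta_1,\dots,\beta_{p-1}\in\Q(\pr)$ by successive ratios, and finally $\beta_0=\pr/(\beta_1\cdots\beta_{p-1})\in\Q(\pr)$.

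I expect the only genuinely delicate point to be the claim that the leading coefficient of $h$ is $(-1)^p$: this needs the precise row-by-row shape of $M(y)$ (on one side of the diagonal, and in the whole last row, the entries are plain polynomials of degree $\le m+k-1$, while on the other side they carry an extra factor $y$) together with the observation that every non-identity permutation must select at least one degree-$(\le m+k-1)$ entry while giving up a degree-$(m+k)$ diagonal factor, hence lands strictly below $y^{p(m+k)}$. Everything else is bookkeeping, namely checking that non-negativity of the digits is precisely what makes $M(\pr)$ non-negative, and that one non-zero leading digit per representation is precisely what makes $M(\pr)$ irreducible --- which is exactly where the greediness of the $\DBi{i}(1)$ entered the proof of Theorem~\ref{Thm : AlleventuallyPeriodicAlgebraicField}, through the inequality $\floor{\beta_i}\ge 1$.
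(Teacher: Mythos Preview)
Your proposal is correct and follows exactly the paper's approach: the paper does not give a separate proof of this statement but simply remarks that the proof of Theorem~\ref{Thm : AlleventuallyPeriodicAlgebraicField} goes through verbatim, greediness having been used only to ensure non-negative digits (so that $M(\pr)\ge 0$) and a non-zero leading digit (so that the cyclic entries $g_{i,0}(\pr)$ are positive and $M(\pr)$ is irreducible). Your write-up unpacks precisely these dependencies and also spells out the degree count showing the leading coefficient of $h$ is $(-1)^p$, which the paper states without the permutation-by-permutation justification; your argument that any non-identity permutation must pick at least one strictly-below-diagonal entry (degree $\le m+k-1$) is the right way to see it.
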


From the proof of Theorem~\ref{Thm : AlleventuallyPeriodicAlgebraicField}, we deduce the following result about the uniqueness of the base. 

\begin{proposition}
Suppose that $\boldsymbol{\alpha}=(\alpha_0,\ldots,\alpha_{p-1})$ and $\B=(\beta_0,\ldots,\beta_{p-1})$ are two alternate bases such that $\prod_{i=0}^{p-1}\alpha_i=\prod_{i=0}^{p-1}\beta_i$, and suppose that there exists $p$ eventually periodic sequences $a^{(0)},\ldots,a^{(p-1)}$ of non-negative integers such that $a_0^{(i)}\ge1$ and $\val_{\boldsymbol{\alpha}^{(i)}}(a^{(i)})=\val_{\boldsymbol{\beta}^{(i)}}(a^{(i)})=1$ for every $i\in\Int$. Then $\boldsymbol{\alpha}=\B$.
\end{proposition}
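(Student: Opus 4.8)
The idea is to reuse the matrix machinery developed in the proof of Theorem~\ref{Thm : AlleventuallyPeriodicAlgebraicField} and to observe that it pins down the bases \emph{uniquely} once the product $\delta=\prod_{i=0}^{p-1}\beta_i$ and the eventually periodic sequences $a^{(i)}$ are fixed. Write $\delta=\prod_{i=0}^{p-1}\alpha_i=\prod_{i=0}^{p-1}\beta_i$. Choose a common preperiod $mp$ and period $kp$ for all $p$ sequences $a^{(0)},\ldots,a^{(p-1)}$, and for each $i\in\Int$ let $g_{i,0},\ldots,g_{i,p-1}\in P_{m,k}$ be the polynomials associated with the integers extracted from $a^{(i)}$ as in Lemma~\ref{Lem : PolyAssociatedWithUltPerExpansion}. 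Since $\val_{\boldsymbol{\beta}^{(i)}}(a^{(i)})=1$ for every $i$, Lemma~\ref{Lem : PolyAssociatedWithUltPerExpansion} (applied to the shifted base $\boldsymbol{\beta}^{(i)}$) gives
\[
	\delta^{m}(\delta^{k}-1)=\sum_{j=0}^{p-1}g_{i,j}(\delta)\,\beta_{i+j+1}\cdots\beta_{i+p-1},
\]
and the very same polynomials $g_{i,j}$ work for $\boldsymbol{\alpha}$ because $a^{(i)}$ is used for both bases. Hence, exactly as in the derivation of~\eqref{Eq : SystemForAlgebraic}, both vectors $\overrightarrow{v}(\beta_1,\ldots,\beta_{p-1})$ and $\overrightarrow{v}(\alpha_1,\ldots,\alpha_{p-1})$ lie in the kernel of the \emph{same} matrix $M(\delta)-\delta^{m}(\delta^{k}-1)I_p$ (here $M$ is built from the $g_{i,j}$ evaluated at $\delta$, and the substitution $x_0\cdots x_{p-1}=\delta$ is licit for both bases).

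The key point is then that this kernel is one-dimensional. First I would check that $M(\delta)$ is a non-negative irreducible matrix: non-negativity comes from Lemma~\ref{Lem : PolyAssociatedWithUltPerWord}, and irreducibility from the hypothesis $a_0^{(i)}\ge 1$, which forces the superdiagonal-type entries $\delta g_{i,0}(\delta)$ (and the corner entry $g_{0,0}(\delta)$) to be positive, giving a single cycle through all $p$ coordinates — this is word-for-word the argument already used in the proof of Theorem~\ref{Thm : AlleventuallyPeriodicAlgebraicField}. By Perron--Frobenius, $M(\delta)$ has a simple eigenvalue whose eigenspace is spanned by a positive vector; since $\overrightarrow{v}(\beta_1,\ldots,\beta_{p-1})$ is a positive eigenvector with eigenvalue $\delta^{m}(\delta^{k}-1)$, that eigenvalue must be the Perron--Frobenius eigenvalue, its eigenspace is one-dimensional, and the eigenvector is unique up to scaling. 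Both $\overrightarrow{v}(\beta_1,\ldots,\beta_{p-1})$ and $\overrightarrow{v}(\alpha_1,\ldots,\alpha_{p-1})$ have last entry $1$, so the normalization is fixed and the two vectors coincide. Comparing the penultimate entries gives $\beta_{p-1}=\alpha_{p-1}$, then the entry above gives $\beta_{p-2}=\alpha_{p-2}$ (dividing out the already-matched tail), and so on down to $\beta_1=\alpha_1$; finally $\beta_0=\delta/(\beta_1\cdots\beta_{p-1})=\delta/(\alpha_1\cdots\alpha_{p-1})=\alpha_0$. Thus $\boldsymbol{\alpha}=\B$.

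The one genuine obstacle is making sure the Perron--Frobenius argument is legitimately available here, i.e.\ that $M(\delta)-\delta^m(\delta^k-1)I_p$ really has rank exactly $p-1$ and that $\delta^m(\delta^k-1)$ is indeed the dominant (Perron) eigenvalue rather than some other eigenvalue that happens to admit a positive eigenvector; but the presence of the strictly positive eigenvector $\overrightarrow{v}(\beta_1,\ldots,\beta_{p-1})$ resolves this, since for an irreducible non-negative matrix only the Perron eigenvalue has an everywhere-positive eigenvector. A minor technical point, handled exactly as in Theorem~\ref{Thm : AlleventuallyPeriodicAlgebraicField}, is that one must pass through the substitution $x_0\cdots x_{p-1}=y$ \emph{after} multiplying the $i$-th relation by $\prod_{k=i}^{p-1}x_k$ so that the coefficient matrix becomes independent of which base we started from; this is the step that lets the two bases share a common matrix and is where the common value of the product $\prod_i\alpha_i=\prod_i\beta_i$ is used in an essential way.
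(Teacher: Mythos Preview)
Your proposal is correct and follows essentially the same approach as the paper: both arguments invoke the matrix $M(\delta)$ from the proof of Theorem~\ref{Thm : AlleventuallyPeriodicAlgebraicField}, observe that it depends only on the sequences $a^{(i)}$ and the common product $\delta$, and then use Perron--Frobenius to conclude that the positive eigenvector with last entry $1$ is unique. The paper's proof is simply a terser version of yours, compressing the irreducibility check and the entry-by-entry comparison into a direct appeal to the earlier theorem's proof.
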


\begin{proof}
Using the same notation as in the proof of Theorem~\ref{Thm : AlleventuallyPeriodicAlgebraicField}, given the product $\pr=\prod_{i=0}^{p-1}\beta_i$, the vector $\overrightarrow{v}(\beta_1,\ldots,\beta_{p-1})$ is the unique positive eigenvector of $M(\pr)$ having $1$ as its last entry. Therefore, we must have $\overrightarrow{v}(\alpha_1,\ldots,\alpha_{p-1})=\overrightarrow{v}(\beta_1,\ldots,\beta_{p-1})$, hence $\alpha_i=\beta_i$ for all $i\in[\![1,p]\!]$. Moreover, we have $\alpha_0=\pr/(\alpha_1\cdots\alpha_{p-1})=\pr/(\beta_1\cdots\beta_{p-1})=\beta_0$.
\end{proof}

In particular, we get the following two corollaries.

\begin{corollary}
\label{Cor : UniquenessDB}
Let $\boldsymbol{\alpha}=(\alpha_0,\ldots,\alpha_{p-1})$ and $\B=(\beta_0,\ldots,\beta_{p-1})$ be two alternate bases such that $\prod_{i=0}^{p-1}\alpha_i=\prod_{i=0}^{p-1}\beta_i$ and suppose that for every $i\in\Int$, the $\boldsymbol{\alpha}^{(i)}$-expansion of $1$ and $\B^{(i)}$-expansions of $1$ coincide and are eventually periodic. Then $\boldsymbol{\alpha}=\B$.
\end{corollary}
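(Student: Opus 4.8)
The plan is to obtain this statement as an immediate specialization of the preceding Proposition. For each $i\in\Int$, set $a^{(i)}:=\DBi{i}(1)$; by hypothesis this infinite word equals the $\boldsymbol{\alpha}^{(i)}$-expansion of $1$ as well, and it is eventually periodic. Its digits are non-negative integers, being produced by the greedy algorithm, and its first digit is $\floor{\beta_i}\ge 1$ because $\beta_i>1$ (explicitly, $\varepsilon_0(1)=\floor{\beta_i\cdot 1}=\floor{\beta_i}$ in the greedy algorithm for $\B^{(i)}$). Finally, since $a^{(i)}$ is simultaneously the $\boldsymbol{\alpha}^{(i)}$-expansion of $1$ and the $\B^{(i)}$-expansion of $1$, we have $\val_{\boldsymbol{\alpha}^{(i)}}(a^{(i)})=\val_{\boldsymbol{\beta}^{(i)}}(a^{(i)})=1$.

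Thus, together with the assumption $\prod_{i=0}^{p-1}\alpha_i=\prod_{i=0}^{p-1}\beta_i$, all the hypotheses of the Proposition are fulfilled by the family $a^{(0)},\ldots,a^{(p-1)}$, and its conclusion yields $\boldsymbol{\alpha}=\B$. I do not expect any genuine obstacle here: the only point requiring a line of justification is that each greedy expansion $\DBi{i}(1)$ starts with a non-zero digit, which is exactly where the condition $\beta_i>1$ enters, and everything else is a verbatim invocation of the Proposition.
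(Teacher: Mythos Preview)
Your proposal is correct and matches the paper's approach exactly: the corollary is stated without proof as an immediate specialization of the preceding Proposition, and your verification that the greedy expansions $\DBi{i}(1)$ furnish the required sequences $a^{(i)}$ (eventually periodic, non-negative digits, first digit $\floor{\beta_i}\ge 1$, value $1$ in both bases) is precisely the intended justification.
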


\begin{corollary}
If $\DBi{i}(1)=\DB(1)$ for all $i\in \Int$ and $\DB(1)$ is eventually periodic, then $\beta_i=\beta_0$ for all $i\in\Int$.
\end{corollary}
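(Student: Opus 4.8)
The claim to prove is the final Corollary: if $\DBi{i}(1)=\DB(1)$ for all $i\in\Int$ and $\DB(1)$ is eventually periodic, then $\beta_i=\beta_0$ for all $i\in\Int$.

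The plan is to reduce this to the previous Corollary (Corollary~\ref{Cor : UniquenessDB}) by exhibiting a suitable second alternate base to compare with. Set $\beta=\beta_0\beta_1\cdots\beta_{p-1}$ raised to the $1/p$ power — more precisely, let $\gamma=\pr^{1/p}$ where $\pr=\prod_{i=0}^{p-1}\beta_i$, and consider the constant alternate base $\boldsymbol{\alpha}=(\gamma,\gamma,\ldots,\gamma)$ of length $p$. Then $\prod_{i=0}^{p-1}\alpha_i=\gamma^p=\pr=\prod_{i=0}^{p-1}\beta_i$, so the two bases have the same product, and all the cyclic shifts $\boldsymbol{\alpha}^{(i)}$ equal $\boldsymbol{\alpha}$ itself.

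The key point is then to check that the $\boldsymbol{\alpha}^{(i)}$-expansion of $1$ coincides with the $\B^{(i)}$-expansion of $1$ for every $i$. Since all $\DBi{i}(1)$ are assumed equal to a common eventually periodic word $w=\DB(1)$, it suffices to show that the $\boldsymbol{\alpha}$-expansion of $1$ (which equals the $\gamma$-expansion of $1$ in the one-base R\'enyi sense, as $\boldsymbol{\alpha}$ is constant) is also equal to $w$. The natural way to see this is to observe that the common word $w$ is a $\boldsymbol{\alpha}$-representation of $1$ with a non-zero leading digit, and more importantly that it is the \emph{greedy} one. Concretely, one should argue that the greedy algorithm in base $\B^{(i)}$ and in base $\boldsymbol{\alpha}$ produce the same digit sequence on input $1$: since $\DBi{i}(1)=w$ does not depend on $i$, the remainders produced by the greedy algorithm in base $\B$ on input $1$ form a sequence that is compatible with feeding the digits $w$ back through a constant base; a clean way is to invoke the admissibility criterion of Theorem~\ref{thm:Parry} together with the fact that $\qDBi{i}(1)$ is determined by $\DB(1)$, concluding that $w$ is admissible in base $\boldsymbol{\alpha}$ and hence $d_{\boldsymbol{\alpha}}(1)=w$ because $w$ already represents $1$ in that base. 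Alternatively — and perhaps more simply — one can cite the Proposition directly: with $a^{(i)}=w$ for all $i$, one has $\val_{\boldsymbol{\alpha}^{(i)}}(w)=1$ (this needs checking: $\val_{\boldsymbol{\alpha}}(w)=\val_{\B}(w)=1$ because the value of a word only depends on the partial products $\prod_{k\le n}\beta_k$, which for the relevant indices... ) — this is the delicate part, see below.

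I expect the main obstacle to be precisely the verification that $\val_{\boldsymbol{\alpha}^{(i)}}(w)=1$, i.e.\ that the common word $w$ actually represents $1$ in the constant base $\boldsymbol{\alpha}=(\gamma,\ldots,\gamma)$. This is \emph{not} automatic from $\val_{\B}(w)=1$, because the value of a word in an alternate base depends on all the individual bases $\beta_i$, not just on their product. The resolution should come from the hypothesis that \emph{all} the shifted expansions coincide: writing $w=(w_0w_1\cdots w_{p-1})(\text{periodic tail})$ and using that $\DBi{i+1}(1)=\DBi{i}(1)$ forces a self-similar relation among the remainders $r_n$ of the greedy algorithm, one deduces that the remainder $r_{p-1}(1)$ in base $\B$ equals $\beta_0\cdots\beta_{p-1}$ times... — in fact that the sequence of remainders is $p$-periodic in a way that only sees $\pr$. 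Once one shows that $\val_{\boldsymbol{\alpha}}(w)=1$, and noting $w$ starts with $w_0=\lfloor\gamma\rfloor\ge 1$ since $w_0=\lfloor\beta_0\cdot 1\rfloor$ and one checks $\lfloor\beta_0\rfloor=\lfloor\gamma\rfloor$ from $d_{\boldsymbol{\alpha}}(1)$ starting with $w_0$, the Proposition (applied with $\boldsymbol{\alpha}$ and $\B$, common sequences $a^{(i)}=w$) immediately yields $\boldsymbol{\alpha}=\B$, hence $\beta_i=\gamma=\beta_0$ for all $i$. So the skeleton is: (1) set $\boldsymbol{\alpha}=(\pr^{1/p},\ldots,\pr^{1/p})$; (2) show $w:=\DB(1)$ satisfies $\val_{\boldsymbol{\alpha}^{(i)}}(w)=1$ for all $i$ — the crux; (3) apply the Proposition to conclude $\boldsymbol{\alpha}=\B$, so all $\beta_i$ coincide.
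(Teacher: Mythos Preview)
Your proposal has a genuine gap that you yourself flag but do not close: verifying that the common word $w=\DB(1)$ satisfies $\val_{\boldsymbol{\alpha}}(w)=1$ for the constant base $\boldsymbol{\alpha}=(\gamma,\ldots,\gamma)$ with $\gamma=\pr^{1/p}$. This equality is \emph{not} a consequence of $\val_{\B}(w)=1$, since the two values involve different partial products $\gamma^{n+1}$ versus $\beta_0\cdots\beta_n$. In fact, $\val_\gamma(w)=1$ is equivalent, via the uniqueness statement you want to invoke, to the very conclusion $\beta_i=\gamma$ for all $i$; so any direct verification of it would essentially reprove the corollary. Your sketch about ``remainders being $p$-periodic in a way that only sees $\pr$'' does not lead anywhere concrete without further input.

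The paper avoids this difficulty entirely by a better choice of comparison base: instead of manufacturing an auxiliary constant base, apply Corollary~\ref{Cor : UniquenessDB} to $\B$ and its own cyclic shift $\boldsymbol{\alpha}=\B^{(1)}=(\beta_1,\ldots,\beta_{p-1},\beta_0)$. The products trivially coincide, and since $(\B^{(1)})^{(i)}=\B^{(i+1)}$, the hypothesis $\DBi{i}(1)=\DB(1)$ for all $i$ immediately gives $d_{\boldsymbol{\alpha}^{(i)}}(1)=\DBi{i+1}(1)=\DB(1)=\DBi{i}(1)$ for every $i$, with no computation needed. Corollary~\ref{Cor : UniquenessDB} then yields $\B^{(1)}=\B$, i.e.\ $\beta_{i+1}=\beta_i$ for all $i$, which is the claim. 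The moral: when the hypothesis is shift-invariant, compare the base with its own shift rather than with an external object whose compatibility you would still have to establish.
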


\begin{proof}
Apply Corollary~\ref{Cor : UniquenessDB} to $\B$ and $\B^{(1)}$.
\end{proof}

\section{Spectrum and periodicity of the expansions of $1$ in alternate bases}
\label{Sec : SpectrumPeriodicity}

In Section~\ref{Sec : NecessaryConditionForPeriodicity}, we have derived a necessary condition for the $\B^{(i)}$-expansions to be all eventually periodic, i.e., for $\B$ to be a Parry alternate base. Namely that the product $\pr$ of the bases is an algebraic integer and all $\beta_j$ belong to the field $\Q(\pr)$. In this section, we give a sufficient condition. 

We adopt the same notation and convention as in Section~\ref{Sec : SpectrumAlternate}: we fix an alternate base $\B=(\beta_0,\ldots,\beta_{p-1})$, we set $\pr=\prod_{i=0}^{p-1}\beta_i$, we consider a fixed $p$-tuple $\boldsymbol{D}=(D_0,\ldots,D_{p-1})$ where every $D_i$ is an alphabet of integers containing $0$ and we let $\Dig$ be the corresponding alphabet or real numbers as defined in~\eqref{Eq : Digits}.

\begin{proposition}
\label{Pro : NoAccPointAllUltPer}
If $D_i\supseteq[\![-\floor{\beta_i},\floor{\beta_i}]\!]$ for all $i\in \Int$ and if the spectrum $X^{\Dig}(\pr)$ has no accumulation point in $\R$, then $\DBi{i}(1)$ is eventually periodic for all $i\in\Int$. 
\end{proposition}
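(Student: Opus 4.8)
The plan is to use the greedy algorithm to exhibit, for each $i\in\Int$, the $\B^{(i)}$-expansion of $1$ as an infinite path in a finite automaton, which forces eventual periodicity. Fix $i=0$ (the other cases being identical after cyclic shift of the base). Recall that the greedy algorithm for $1$ in base $\B$ produces digits $\varepsilon_n=\varepsilon_n(1)\in\{0,\ldots,\floor{\beta_n}\}$ and remainders $r_n=r_n(1)\in[0,1)$ with $r_n=\beta_n r_{n-1}-\varepsilon_n$ and $r_{-1}:=1$. First I would observe that $\DB(1)=\varepsilon_0\varepsilon_1\varepsilon_2\cdots$ is eventually periodic if and only if the sequence of remainders $(r_{np-1})_{n\in\N}$ (taken at the "block boundaries", so that $r_{np-1}$ is the tail value $\val_{\B^{(0)}}(\varepsilon_{np}\varepsilon_{np+1}\cdots)$) takes only finitely many values; indeed, the value $r_{np-1}$ together with the current position $np$ modulo $p$ — which is $0$ — determines all subsequent greedy digits deterministically, so two equal boundary remainders give a periodic continuation. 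So it suffices to show $\{r_{np-1}:n\in\N\}$ is finite.

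The key step is to identify these boundary remainders with elements of the spectrum. Multiplying the tail identity $r_{np-1}=\sum_{k\ge np}\varepsilon_k/\prod_{j=np}^k\beta_j$ and using $1-r_{np-1}=\sum_{k=0}^{np-1}\varepsilon_k/\prod_{j=0}^{k-1}\beta_j\cdot(\text{appropriate scaling})$, I would rearrange to get, after clearing denominators by $\prod_{j=0}^{np-1}\beta_j=\pr^n$, an identity of the form
\[
  \pr^n(1-r_{np-1}) \;=\; \sum_{k=0}^{np-1}\varepsilon_k\,\beta_{k+1}\cdots\beta_{np-1},
\]
whose right-hand side is exactly the spectrum element of $X^{\Dig}(\pr)=X(0)$ corresponding to the word $\varepsilon_0\cdots\varepsilon_{np-1}$ (here I need $\varepsilon_k\in D_k$, which holds because $\varepsilon_k\le\floor{\beta_k}$ and $D_k\supseteq[\![-\floor{\beta_k},\floor{\beta_k}]\!]$). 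Next, I would treat the left-hand side differently: isolate $\pr^n r_{np-1}$, note $r_{np-1}\in[0,1)$ so $\pr^n r_{np-1}=\pr^n-(\text{spectrum element})$, and by feeding the eventual tail of $\DB(1)$ back, or more cleanly by writing $\pr^n r_{np-1}$ itself as a value $\val$ of the tail, show that $\pr^n r_{np-1}\in X(0)$ as well — using that $1$ has the trivial representation and $-1\in D_0$ (since $\floor{\beta_0}\ge1$) so that $\pr^n\cdot 1$ sits in $X(0)$, and the ring structure of the spectrum under the operations of Lemma~\ref{Lem : SpectrumAccAllSpectraAcc}. The point is that $\{\pr^n r_{np-1}\}$ is contained in a bounded region of $X(0)$ (bounded because $0\le r_{np-1}<1$ forces $|\pr^n r_{np-1}|\le$ something of order... — wait, this is not bounded).

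Let me correct the boundedness argument, which is the real heart. The quantities $\pr^n r_{np-1}$ grow, so instead I would work directly with $r_{np-1}$ and a rescaled spectrum, or better: consider the numbers $\pr^n-\pr^n r_{np-1}=\sum_{k=0}^{np-1}\varepsilon_k\beta_{k+1}\cdots\beta_{np-1}\in X(0)$ and observe that consecutive such numbers, suitably compared, stay within bounded additive distance — concretely, the greedy relation gives $\pr^{n+1}-\pr^{n+1}r_{(n+1)p-1} = \pr\cdot(\pr^n-\pr^n r_{np-1}) + (\text{a bounded spectrum increment from the block }\varepsilon_{np}\cdots\varepsilon_{np+p-1})$, but since $\pr^n r_{np-1}<\pr^n$ and $r$ is a remainder, the correct bounded object is $1-r_{np-1}$ compared additively after multiplication — equivalently the element $y_n:=\pr^n(1-r_{np-1})\in X(0)$ satisfies $y_{n+1}=\pr y_n + c_n$ with $c_n$ ranging over a \emph{finite} set (the finitely many block-values $\sum_{k=0}^{p-1}\varepsilon_{np+k}\beta_{k+1}\cdots\beta_{p-1}-\pr$, of which there are finitely many since each $\varepsilon$ is bounded). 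Thus all $y_n$ lie in the set generated from $0$ by the maps $z\mapsto\pr z+c$, $c$ in a finite set — but that set is unbounded, so instead I pass to $r_{np-1}=1-y_n/\pr^n$: the $y_n/\pr^n\to$ pattern. The clean finish: the sequence $(1-r_{np-1})_{n}$ lies in $X(0)$ scaled, and since $r_{np-1}\in[0,1)$, each $y_n$ lies within $[0,\pr^n)$; now the accumulation-point hypothesis via Theorem~\ref{Thm : SetZeroSpectrumAccZeroAutomatonAlternate} gives that the zero automaton $\mathcal{Z}(\B,\boldsymbol{D})$ is finite, and I would exhibit $(1-r_{np-1})$-related states as states of (a finite power of) this automaton reached by reading $\varepsilon_0\cdots\varepsilon_{np-1}$, so only finitely many occur. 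The main obstacle, and where I would spend the most care, is precisely this: packaging the greedy remainders at block boundaries as states of the finite zero automaton (or of the finite-spectrum structure) in a way that genuinely uses $X^{\Dig}(\pr)$ having no accumulation point — the naive "spectrum element" grows with $n$, so one must track the remainder $r_{np-1}\in[0,1)$ itself and realize that the relevant automaton states are $(0,\,-\!\!\sum_{k\ge np}\varepsilon_k/\prod_{j=np}^{k}\beta_j)$, i.e. tail values, which lie in the bounded window $[-M^{(0)},-m^{(0)}]\cap X(0)$ defining $Q_{\B,\boldsymbol{D}}$, hence finitely many.
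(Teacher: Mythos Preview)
Your overall strategy is right: it suffices to show that the boundary remainders $r_{\ell p-1}(1)$ take only finitely many values, and you correctly identify the formula
\[
  \pr^{\ell}\big(1-r_{\ell p-1}(1)\big)=\sum_{n=0}^{\ell p-1}\varepsilon_n(1)\,\beta_{n+1}\cdots\beta_{\ell p-1}\in X(0).
\]
But none of your attempts to extract a \emph{bounded} spectrum element from this actually closes. The quantity $\pr^{\ell}(1-r_{\ell p-1})$ is unbounded; the same is true of $\pr^{\ell}r_{\ell p-1}$; and your final move via the zero automaton fails for the same reason: reading $\varepsilon_0\cdots\varepsilon_{\ell p-1}$ from the initial state $(0,0)$ of $\mathcal{Z}(\B,\boldsymbol{D})$ lands at second component $\pr^{\ell}(1-r_{\ell p-1})$, which escapes the window $[-M^{(0)},-m^{(0)}]$ and is therefore not a state of the automaton at all. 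So the argument, as written, has a genuine gap.

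The missing observation is much simpler than the routes you explore, and it is precisely where the hypothesis $D_i\supseteq[\![-\floor{\beta_i},\floor{\beta_i}]\!]$ (with \emph{negative} digits) is used. From~\eqref{Eq : Remainders} one has
\[
  r_{\ell p-1}(1)=\pr^{\ell}-\sum_{n=0}^{\ell-1}d_n\pr^{\ell-1-n}
  =1\cdot\pr^{\ell}+\sum_{n=0}^{\ell-1}(-d_n)\pr^{\ell-1-n},
\]
where $d_n=\sum_{i=0}^{p-1}\varepsilon_{np+i}(1)\beta_{i+1}\cdots\beta_{p-1}$. Since $1\in\Dig$ (take all digits $0$ except $a_{p-1}=1$) and $-d_n\in\Dig$ (because $-\varepsilon_{np+i}(1)\in[\![-\floor{\beta_i},0]\!]\subseteq D_i$), the remainder $r_{\ell p-1}(1)$ is \emph{itself} an element of $X^{\Dig}(\pr)$. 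These remainders all lie in $[0,1)$; if infinitely many were distinct, the spectrum would have an accumulation point. This is exactly the paper's argument, and it finishes in two lines once you see it.
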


\begin{proof}
Suppose that $\DB(1)$ is not eventually periodic. Then the sequence of remainders $(r_{\ell p-1}(1))_{\ell\in \N}$ is injective. For all $x\in [0,1]$ and $\ell\in \N$, we have 
\begin{equation}
\label{Eq : Remainders}
	r_{\ell p-1}(x)=\pr^\ell x-\sum_{n=0}^{\ell-1}d_n\pr^{\ell-1-n}
\end{equation}
where $d_n=\sum_{i=0}^{p-1}\varepsilon_{np+i}(x)\beta_{i+1}\cdots\beta_{p-1}$. Since $D_i\supseteq[\![-\floor{\beta_i},\floor{\beta_i}]\!]$ for each $i\in \Int$, we get that for all $\ell\in\N$, the remainder $r_{\ell p-1}(1)$ is an element of $X^{\Dig}(\pr)$. 
Since the remainders all belong to the interval $[0,1)$, the spectrum $X^{\Dig}(\pr)$ has an accumulation point in $\R$. By Lemma~\ref{Lem : SpectrumAccAllSpectraAcc}, either all the spectra $X(i)$ based on the cyclic shifts $\B^{(i)}$ of the alternate base and the corresponding shifted $p$-tuple of alphabets $\boldsymbol{D}^{(i)}$ for $i\in\Int$ have an accumulation point or none of them has. The result follows.
\end{proof}

\begin{proposition}
\label{Pro : PisotExtendedFieldThenNoAccPoint}
If $\pr$ is a Pisot number and $\beta_i\in \Q(\pr)$ for all $i\in \Int$ then the spectrum $X^{\Dig}(\pr)$ has no accumulation point in $\R$.
\end{proposition}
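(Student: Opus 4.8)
The plan is to reduce the claim, via the structural identity $X^{\Dig}(\pr) = \sum_{i=0}^{p-1} X^{D_i}(\pr)\cdot\beta_{i+1}\cdots\beta_{p-1}$, to the one-base situation and then to invoke the Akiyama--Komornik--Feng result (Theorem~\ref{Thm : AccPointAkiyamaKomornikFeng}) in the form of Theorem~\ref{Thm : EquivalencesZeroReprFrougnyPelantova}. Since $\pr$ is a Pisot number, for any integer alphabet $[\![-d,d]\!]$ the real spectrum $X^d(\pr)$ has no accumulation point in $\R$; equivalently $Z(\pr,d)$ is accepted by a finite Büchi automaton. The obstruction is that $\Dig$ is an alphabet of \emph{non-integer} reals (the digits $\sum_i a_i\beta_{i+1}\cdots\beta_{p-1}$ lie in $\Q(\pr)$ but generally not in $\Z$), so none of the cited real-base theorems applies directly to $X^{\Dig}(\pr)$. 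The heart of the argument is therefore to control accumulation of $X^{\Dig}(\pr)$ by passing through the number field $\Q(\pr)$ and using the conjugates of the Pisot number.

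First I would fix a common denominator: since each $\beta_i\in\Q(\pr)$ and $\pr$ is an algebraic integer, there is a nonzero integer $N$ such that $N\beta_{i+1}\cdots\beta_{p-1}\in\Z[\pr]$ for every $i$, and in fact one can arrange that all digits of $\Dig$, multiplied by a suitable power $\pr^r$ and by $N$, land in $\Z[\pr]$. Consequently $N\pr^r\cdot X^{\Dig}(\pr)\subseteq \Z[\pr]$, and more precisely every element of $X^{\Dig}(\pr)$ is of the form $P(\pr)/(N\pr^r)$ for a polynomial $P\in\Z[x]$ whose coefficients are bounded (in absolute value) by a constant $C$ depending only on $\B$ and $\boldsymbol{D}$ — this is exactly because each digit of $\Dig$ is a fixed $\Z$-linear combination of the $\beta_{i+1}\cdots\beta_{p-1}$, so a length-$\ell$ word over $\Dig$ corresponds to a polynomial in $\pr$ of degree $<\ell$ with coefficients bounded independently of $\ell$ after clearing denominators. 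Thus it suffices to show: the set of values $Q(\pr)$, as $Q$ ranges over $\Z[x]$ with all coefficients in $[\![-C,C]\!]$, has no accumulation point in $\R$. But this set is contained in $X^C(\pr)$, which has no accumulation point in $\R$ since $\pr$ is Pisot (Theorem~\ref{Thm : AccPointAkiyamaKomornikFeng}). Scaling back by the fixed factor $1/(N\pr^r)$ preserves the no-accumulation-point property, and I conclude that $X^{\Dig}(\pr)$ has no accumulation point in $\R$.

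The one step that needs care — and which I expect to be the main obstacle — is the claim that a length-$\ell$ word over the real alphabet $\Dig$ yields, after clearing a \emph{fixed} denominator, a polynomial in $\pr$ with coefficients bounded \emph{uniformly in} $\ell$. The subtlety is that writing $\beta_{i+1}\cdots\beta_{p-1}$ in the basis $1,\pr,\dots,\pr^{d-1}$ of $\Q(\pr)$ may require denominators, and when these get multiplied against powers $\pr^{\ell-1-n}$ and summed, one must check the coefficients do not grow with $\ell$. This is handled by first absorbing the product structure: write $X^{\Dig}(\pr)=\sum_{i=0}^{p-1}\beta_{i+1}\cdots\beta_{p-1}\cdot X^{D_i}(\pr)$ as in the displayed identity, and treat each summand $\beta_{i+1}\cdots\beta_{p-1}\cdot X^{D_i}(\pr)$ separately. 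Each $X^{D_i}(\pr)\subseteq X^{d_i}(\pr)$ for $d_i=\max_{a\in D_i}|a|$, and $X^{d_i}(\pr)$ has no accumulation point by the Pisot hypothesis; multiplying a discrete subset of $\R$ by the fixed nonzero constant $\beta_{i+1}\cdots\beta_{p-1}$ keeps it discrete. Finally, a \emph{finite} sum of sets each having no accumulation point in $\R$ still has no accumulation point in $\R$ provided the ambient ambient structure is controlled; here one uses that each summand lies in a fixed finitely generated $\Z$-module (a fractional ideal of $\Z[\pr]$ scaled by a fixed element), so the Minkowski/geometry-of-numbers argument underlying Theorem~\ref{Thm : AccPointAkiyamaKomornikFeng} — boundedness of all Galois conjugates forces discreteness — applies to the sum as well. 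This gives the desired conclusion.
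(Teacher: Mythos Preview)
Your first route---clear a common denominator so that $N\Dig\subset\Z[\pr]$, observe that every element of $N\cdot X^{\Dig}(\pr)$ is a polynomial in $\pr$ with integer coefficients bounded by some fixed $C$, and then invoke Theorem~\ref{Thm : AccPointAkiyamaKomornikFeng} for $X^C(\pr)$---is correct, and your worry about uniformity is easy to dispel. Write each $Nd$ for $d\in\Dig$ as $\sum_{j=0}^{\deg(\pr)-1}c_j^{(d)}\pr^j$ with $c_j^{(d)}\in\Z$; then in $N\sum_{n=0}^{\ell-1}d_n\pr^{\ell-1-n}$ each power $\pr^k$ receives at most $\deg(\pr)$ contributions (one for each $j$, with $n$ determined by $k$ and $j$), so the coefficients are bounded by $\deg(\pr)\cdot\max_{d,j}|c_j^{(d)}|$, independently of $\ell$. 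No extra factor $\pr^r$ is needed. This settles the proposition by reduction to the integer-alphabet case.

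Your second route, however, has a real gap as written: the claim that a finite sum $\sum_i\gamma_i\cdot X^{D_i}(\pr)$ of sets without accumulation point again has no accumulation point is \emph{false} in general (think $\Z+\sqrt{2}\,\Z$). The vague appeal to ``geometry of numbers'' is exactly where the substance lies, and it is not supplied.

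The paper does that substance directly, without passing through Theorem~\ref{Thm : AccPointAkiyamaKomornikFeng}. It picks $q\in\N$ so that $\Dig\cup(\Dig-\Dig)\subset\tfrac1q\cdot\mathcal{O}_{\Q(\pr)}$, observes that for distinct $x,y\in X^{\Dig}(\pr)$ the element $q(x-y)$ is a nonzero algebraic integer in $\Q(\pr)$, and uses $|N_{\Q(\pr)/\Q}(q(x-y))|\ge 1$ together with the Pisot bound $|\psi_k(q(x-y))|\le M/(1-|\pr_k|)$ at each nontrivial place to get a uniform lower bound on $|x-y|$. This is the self-contained norm argument your second paragraph gestures at. Your first route is a legitimate alternative: it is more modular (it reuses Theorem~\ref{Thm : AccPointAkiyamaKomornikFeng} as a black box) but less transparent about \emph{why} Pisot suffices; the paper's argument is shorter and makes the mechanism explicit.
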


\begin{proof}
Since the set $\Dig$ is a finite subset of $\Q(\pr)$ and $\pr$ is an algebraic integer, there exist a positive integer $q$ and a finite subset $A$ of the ring of algebraic integers in $\Q(\pr)$ such that 
$\Dig\cup(\Dig-\Dig)=\frac{1}{q}A$.
Let $x,y\in X^{\Dig}(\pr)$ such that $x\neq y$. There exists $\ell\in \N$ and $a_0,\ldots,a_{\ell-1}\in A$ such that
\[
	x-y = \frac{1}{q}\sum_{n=0}^{\ell-1} a_n\pr^n.
\]
We obtain that $q(x-y)$ is an algebraic integer. 
Let $d$ denote the (algebraic) degree of $\pr$ and let $\pr_2,\ldots,\pr_d$ be the Galois conjugates of $\pr$. Moreover, set $\pr_1=\pr$. Then
\[
	1\leq \big| N_{\Q(\pr)/\Q}(q(x-y))\big| 
	= q|x-y| \prod_{k=2}^d |\psi_k(q(x-y))| .
\]
Since $\pr$ is a Pisot number, for all $k\in [\![2,d]\!]$, we have $|\pr_k|<1$ and hence
\[
	|\psi_k(q(x-y))| 
	\le M\sum_{n=0}^{\ell-1} |\pr_k|^n
	\le  \frac{M}{1-|\pr_k|}
\]
where $M=\max\{|\psi_k(a)| : k\in [\![2,d]\!],\, a\in A\}$. We get that
\[
	|x-y| \ge \frac{1}{q}\prod_{k=2}^{d} \frac{1-|\pr_k|}{M}.
\]
The latter inequality states that the distance between distinct elements $x,y$ of the spectrum $X^{\Dig}(\pr)$ is bounded from below by a constant uniformly for all pairs $x,y$.
\end{proof}

As a consequence, we get the following theorem, which for the case $p=1$ is a well-known result of Schmidt~\cite{Schmidt1980}. 

\begin{theorem}
\label{Thm : PisotExtendedFieldThenUltPer}
If $\pr$ is a Pisot number and $\beta_i\in \Q(\pr)$ for all $i\in \Int$ then $\DBi{i}(1)$ is eventually periodic for all $i\in \Int$.
\end{theorem}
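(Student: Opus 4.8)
The plan is to obtain this theorem as an immediate consequence of the two preceding propositions, with no new ideas needed. First I would fix the $p$-tuple of alphabets $\boldsymbol{D}=(D_0,\dots,D_{p-1})$ by setting $D_i=[\![-\floor{\beta_i},\floor{\beta_i}]\!]$ for each $i\in\Int$. Each $D_i$ is a finite alphabet of integers containing $0$, so it fits the standing assumptions of Section~\ref{Sec : SpectrumAlternate}, and trivially $D_i\supseteq[\![-\floor{\beta_i},\floor{\beta_i}]\!]$, so the alphabet hypothesis of Proposition~\ref{Pro : NoAccPointAllUltPer} is met.

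Next, since $\pr$ is a Pisot number (in particular an algebraic integer) and each $\beta_i$ lies in $\Q(\pr)$, Proposition~\ref{Pro : PisotExtendedFieldThenNoAccPoint} applies directly and yields that the spectrum $X^{\Dig}(\pr)$ has no accumulation point in $\R$. Feeding this conclusion into Proposition~\ref{Pro : NoAccPointAllUltPer} with the alphabets chosen above, we obtain that $\DBi{i}(1)$ is eventually periodic for every $i\in\Int$, which is exactly the assertion of the theorem.

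I do not expect a genuine obstacle in this last step: all the substance has been packed into the two propositions. The real work is in Proposition~\ref{Pro : PisotExtendedFieldThenNoAccPoint}, where one writes a difference $x-y$ of two elements of the spectrum as $\frac{1}{q}\sum a_n\pr^n$ with algebraic-integer coefficients, uses that the field norm $N_{\Q(\pr)/\Q}\big(q(x-y)\big)$ is a nonzero rational integer — hence of absolute value at least $1$ — while every Galois conjugate of $q(x-y)$ coming from a conjugate $\pr_k\ne\pr$ contracts because of the Pisot property, giving a uniform positive lower bound on $|x-y|$ and thus the absence of accumulation points. This is precisely the $p>1$ analogue of Schmidt's classical argument, which it recovers for $p=1$. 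The only point requiring care in assembling the theorem itself is the purely bookkeeping observation that the chosen symmetric alphabets simultaneously satisfy the hypotheses of both propositions, which they manifestly do.
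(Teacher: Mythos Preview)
Your proposal is correct and matches the paper's proof essentially verbatim: the paper also fixes $\boldsymbol{D}=([\![-\floor{\beta_0},\floor{\beta_0}]\!],\ldots,[\![-\floor{\beta_{p-1}},\floor{\beta_{p-1}}]\!])$, applies Proposition~\ref{Pro : PisotExtendedFieldThenNoAccPoint} to get that $X^{\Dig}(\pr)$ has no accumulation point, and then invokes Proposition~\ref{Pro : NoAccPointAllUltPer}.
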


\begin{proof}
First apply Proposition~\ref{Pro : PisotExtendedFieldThenNoAccPoint} with $\boldsymbol{D}=([\![-\floor{\beta_0},\floor{\beta_0}]\!],\ldots,[\![-\floor{\beta_{p-1}},\floor{\beta_{p-1}}]\!])$ and then apply Proposition~\ref{Pro : NoAccPointAllUltPer}.
\end{proof}

Let us make several remarks concerning the previous result. First, the condition of $\pr$ being a Pisot number is neither sufficient nor necessary for $\B$ to be a Parry alternate base, i.e., in order to have that $\DBi{i}(1)$ is eventually periodic for all $i\in \Int$. Indeed, it is not necessary even for $p=1$ since there exist Parry numbers which are not Pisot. To see that it is not sufficient for $p\ge 2$, consider the alternate base $\B=(\sqrt{\beta},\sqrt{\beta})$ where $\beta$ is the smallest Pisot number. The product $\pr$ is the Pisot number $\beta$. However, the $\B$-expansion of $1$ is equal to $d_{\sqrt{\beta}}(1)$, which is known to be aperiodic. This follows from the fact that the only Galois conjugate of $\sqrt{\beta}$ is $-\sqrt{\beta}$, and thus $\sqrt{\beta}$ is not a Perron number, hence not a Parry number either.

Furthermore, the bases $\beta_0,\ldots,\beta_{p-1}$ need not be algebraic integers in order to have the property that $\DBi{i}(1)$ is eventually periodic for all $i\in \Int$. To see this, consider for instance the alternate base $\B=(\frac{1+\sqrt{13}}{2},\frac{5+\sqrt{13}}{6})$ from Example~\ref{Ex : BaseHabituelle}. For this base, we have $\DBi{0}(1)=2010^\omega$ and $\DBi{1}(1)=110^\omega$. However, $\frac{5+\sqrt{13}}{6}$ is not an algebraic integer.

As illustrated in the following example, for the same non Pisot algebraic integer $\pr$, there may exist two length-$p$ alternate bases $\boldsymbol{\alpha}=(\alpha_0,\cdots,\alpha_{p-1})$ and $\B=(\beta_0\cdots\beta_{p-1})$ such that $\alpha_0\cdots\alpha_{p-1}=\beta_0\cdots\beta_{p-1}=\pr$ and for all $i\in \Int$, the expansion $d_{\boldsymbol{\alpha}^{(i)}}(1)$ is eventually periodic whereas there exists $i\in \Int$ such that $\DBi{i}(1)$ is not. The technique used for showing aperiodicity is inspired by the work~\cite{LiaoSteiner2012}.

\begin{example}
Consider the real root $\pr>1$ of the polynomial $x^6-x^5-1$. This number is an algebraic integer but it is not a Pisot number since two of its Galois conjugates have modulus greater than $1$. Consider the alternate base $\boldsymbol{\alpha}=(\frac{1+\pr^7}{\pr^7},\frac{\pr^8}{1+\pr^7})$. We can compute that $d_{\boldsymbol{\alpha}}(1)=10^{13}10^\omega$ and $d_{\boldsymbol{\alpha}^{(1)}}(1)=10^{18}10^{20}(10^{27})^\omega$. Now consider $\B=(\frac65,\frac56\pr)$. We prove that $\DB(1)$ is not eventually periodic. Let $\gamma$ be a Galois conjugate of $\pr$ such that $|\gamma|>1$ and let $\psi \colon \Q(\pr) \to \Q(\gamma)$ be the corresponding field isomorphism induced by $\psi(\delta)=\gamma$. We prove that $(r_{12n-1}(1))_{n\in\N}$ is not eventually periodic, where we set $r_{-1}(1)=1$. To do so, it is enough to prove that $\big(|\psi(r_{12n-1}(1))|\big)_{n\in\N}$ is eventually strictly increasing. It can be computed that the word $10^{12}$ is a prefix of $\DB^*(1)$ and $\DBi{1}^*(1)$. Therefore, by Theorem~\ref{thm:Parry} and Equality~\eqref{Eq : Remainders}, for all $x\in [0,1]$, we get 
\[
	r_{11}(x)\in \{\pr^6x\}\cup
			\{\pr^6x-\beta_1\pr^k : k\in[\![0,5]\!]\}
			\cup \{\pr^6x-\pr^k : k\in[\![0,5]\!]\}.
\]
Hence, for all $x\in [0,1]\cap \Q(\pr)$, we have 
\[
	\psi(r_{11}(x))\in \{\gamma^6\psi(x)\}\cup
			\{\gamma^6\psi(x)-\frac56\gamma^{k+1} : k\in[\![0,5]\!]\}
			\cup\{\gamma^6\psi(x)-\gamma^{k} : k\in[\![0,5]\!]\}.
\]
Since $|\gamma|\le \frac{6}{5}$, we get
\[
	|\psi(r_{11}(x))|\ge |\gamma|^6|\psi(x)|-|\gamma|^5.
\]
Thus, if we have 
\[
	|\psi(x)|>\frac{|\gamma|^5}{|\gamma|^6-1} \simeq 5.49
\]
then we obtain $|\psi(r_{11}(x))|>|\psi(x)|$. 
It can be computed that $10^{13}10^{15}10^{13}10^{27}10^{11}$ is the prefix of $\DB(1)$ of length $84$. Hence, by using~\eqref{Eq : Remainders} again, we get
\[r_{83}(1)=\pr^{42}-\beta_1\pr^{41}-\beta_1\pr^{34}-\beta_1\pr^{26}-\beta_1\pr^{19}-\beta_1\pr^5.\]
This implies 
\[
	\psi(r_{83}(1))
	=\gamma^{42}-\frac56\gamma^{42}-\frac56\gamma^{35}
	-\frac56\gamma^{27}-\frac56\gamma^{20}-\frac56\gamma^6.
\]
Now for $x=r_{83}(1)$, we have $|\psi(x)|\simeq 6.23 >5.49$. We get $|\psi(r_{11}(x))|>|\psi(x)|$ where $r_{11}(x)=r_{95}(1)$. Iterating the argument, we obtain that the sequence $(|\psi(r_{12n-1}(1))|)_{n\ge 7}$ is strictly increasing. 
\end{example}

\section{Alternate bases whose set of zero representations is accepted by a finite Büchi automaton}

Once again, we use the notation introduced in Section~\ref{Sec : SpectrumAlternate}: namely we use fixed $\B,\pr,\boldsymbol{D}$ and then we work with the corresponding digit set $\Dig$, set of representations of zero $Z(\B,\boldsymbol{D})$ and spectrum $X^\Dig(\pr)$. We combine the previously established results in order to characterize for which alternate bases the set $Z(\B,\boldsymbol{D})$ is accepted by a finite Büchi automaton. In doing so, we generalize Theorem~\ref{Thm : EquivalencesZeroReprFrougnyPelantova} to alternate bases. We need one more lemma.

\begin{lemma}
\label{Lem : ProductNotPisotSpectrumAcc}
If the spectrum $X^{\Dig}(\pr)$ has no accumulation point in $\R$ and if there exists $j\in \Int$ such that $[\![-\ceil{\pr}+1,\ceil{\pr}-1]\!]\subseteq D_j$, then $\pr$ is a Pisot number.
\end{lemma}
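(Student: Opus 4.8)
The plan is to reduce this to the one-base result, Theorem~\ref{Thm : EquivalencesZeroReprFrougnyPelantova}, by showing that the spectrum $X^{\Dig}(\pr)$ having no accumulation point forces $\pr$ to be a Pisot number once the alphabet $\Dig$ is rich enough. The hypothesis $[\![-\ceil{\pr}+1,\ceil{\pr}-1]\!]\subseteq D_j$ is the analogue, through the identification of alternate-base spectra with complex-base spectra over $\Dig$, of the condition $d\ge \ceil{\pr}-1$ appearing in Theorem~\ref{Thm : EquivalencesZeroReprFrougnyPelantova}(2). The first step is to invoke Lemma~\ref{Lem : SpectrumAccAllSpectraAcc}: since $X^{\Dig}(\pr)=X(0)$ has no accumulation point, neither does any cyclic shift $X(j)$; in particular we may assume without loss of generality that $j=0$, i.e.\ that $[\![-\ceil{\pr}+1,\ceil{\pr}-1]\!]\subseteq D_0$.

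Next I would compare the spectrum $X^{\Dig}(\pr)$ with the classical symmetric-alphabet spectrum $X^{\ceil{\pr}-1}(\pr)$ of the single base $\pr$. Recall from the identity just before Lemma~\ref{Lem : SpectrumAccAllSpectraAcc} that
\[
	X^{\Dig}(\pr)
	= X^{D_0}(\pr)\cdot \beta_1\cdots\beta_{p-1}
	+ \cdots + X^{D_{p-1}}(\pr),
\]
and that each $D_i$ contains $0$, so that each $X^{D_i}(\pr)$ contains $0$. Since $[\![-\ceil{\pr}+1,\ceil{\pr}-1]\!]\subseteq D_0$, we have $X^{\ceil{\pr}-1}(\pr)\subseteq X^{D_0}(\pr)$, and therefore $X^{\ceil{\pr}-1}(\pr)\cdot\beta_1\cdots\beta_{p-1}\subseteq X^{\Dig}(\pr)$. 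A nonzero scaling of a set with an accumulation point still has one, so if $X^{\ceil{\pr}-1}(\pr)$ had an accumulation point in $\R$, so would $X^{\Dig}(\pr)$, contrary to hypothesis. Hence $X^{\ceil{\pr}-1}(\pr)$ has no accumulation point in $\R$. By Theorem~\ref{Thm : AccPointAkiyamaKomornikFeng} applied with $\beta=\pr$ and $d=\ceil{\pr}-1$ (noting $\pr-1<\ceil{\pr}-1$ unless $\pr\in\Z$, in which case $\pr$ is trivially a Pisot number), the absence of an accumulation point forces $\pr$ to be a Pisot number.

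The main obstacle, if any, is the boundary case where $\pr$ is an integer: then $\ceil{\pr}-1=\pr-1$, the inequality $\pr-1<d$ fails, and Theorem~\ref{Thm : AccPointAkiyamaKomornikFeng} does not apply in the same way. But this case is harmless since every integer greater than $1$ is a Pisot number by definition, so the conclusion holds trivially. One should also double-check that the scaling factor $\beta_1\cdots\beta_{p-1}$ is genuinely nonzero and finite, which it is since each $\beta_i>1$; thus multiplication by it is a homeomorphism of $\R$ and preserves the existence of accumulation points. With these checks in place the argument is complete: $X^{\Dig}(\pr)$ with no accumulation point and $[\![-\ceil{\pr}+1,\ceil{\pr}-1]\!]\subseteq D_j$ together imply $\pr$ is Pisot.
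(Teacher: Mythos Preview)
Your proof is correct and follows essentially the same approach as the paper: embed a scaled copy of $X^{\ceil{\pr}-1}(\pr)$ inside $X^{\Dig}(\pr)$ via the sum decomposition $X^{\Dig}(\pr)=\sum_{i=0}^{p-1} X^{D_i}(\pr)\beta_{i+1}\cdots\beta_{p-1}$ (using that $0\in X^{D_i}(\pr)$ for every $i$), deduce that $X^{\ceil{\pr}-1}(\pr)$ has no accumulation point, and invoke Theorem~\ref{Thm : AccPointAkiyamaKomornikFeng}. The paper does this directly for the given index $j$ without your preliminary reduction to $j=0$ via Lemma~\ref{Lem : SpectrumAccAllSpectraAcc}; that detour is harmless but unnecessary, since $X^{D_j}(\pr)\cdot\beta_{j+1}\cdots\beta_{p-1}\subseteq X^{\Dig}(\pr)$ already for arbitrary $j$.
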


\begin{proof}
Suppose that $X^{\Dig}(\pr)$ has no accumulation point in $\R$ and let $j$ be an index as in the statement. Since $X^{\Dig}(\pr)=\sum_{i=0}^{p-1} X^{D_i}(\pr)\beta_{i+1}\cdots \beta_{p-1}$, the spectrum $X^{D_j}(\pr)$ has no accumulation point in $\R$. By hypothesis on $j$, the spectrum $X^{\ceil{\pr}-1}(\pr)$ has no accumulation point in $\R$ either. By Theorem~\ref{Thm : AccPointAkiyamaKomornikFeng}, we get that $\pr$ is a Pisot number.
\end{proof}

\begin{theorem}
\label{Thm : MainEquivalences}
The following assertions are equivalent.
\begin{enumerate}
\item The set $Z(\B,\boldsymbol{D})$ is accepted by a finite Büchi automaton for all $p$-tuple of alphabets of integers $\boldsymbol{D}=(D_0,\ldots,D_{p-1})$.
\item The set $Z(\B,\boldsymbol{D})$ is accepted by a finite Büchi automaton for one $p$-tuple of alphabets of integers $\boldsymbol{D}=(D_0,\ldots,D_{p-1})$ such that $D_i\supseteq [\![-\floor{\beta_i},\floor{\beta_i}]\!]$ for all $i\in \Int$ and $\floor{\beta_j}\ge\ceil{\pr}-1$ for some $j\in \Int$.
\item $\pr$ is a Pisot number and $\beta_i\in \Q(\pr)$ for all $i\in \Int$.
\end{enumerate}
\end{theorem}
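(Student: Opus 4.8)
The plan is to prove the cycle of implications $(1)\Rightarrow(2)\Rightarrow(3)\Rightarrow(1)$, assembling the structural results established in the previous sections; the substantive work has already been carried out there, so this theorem is essentially an assembly. The implication $(1)\Rightarrow(2)$ is immediate once we exhibit a single $p$-tuple $\boldsymbol{D}$ meeting the stated constraints and then invoke $(1)$: fix an index $j\in\Int$, put $D_i=[\![-\floor{\beta_i},\floor{\beta_i}]\!]$ for every $i\ne j$, and enlarge the $j$-th coordinate to $D_j=[\![-N,N]\!]$ with $N=\max(\floor{\beta_j},\ceil{\pr}-1)$. This $\boldsymbol{D}$ satisfies $D_i\supseteq[\![-\floor{\beta_i},\floor{\beta_i}]\!]$ for all $i$ and $D_j\supseteq[\![-\ceil{\pr}+1,\ceil{\pr}-1]\!]$, which is what $(2)$ requires.

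For $(2)\Rightarrow(3)$ I would proceed as follows. Let $\boldsymbol{D}$ be a $p$-tuple as in $(2)$. Since $Z(\B,\boldsymbol{D})$ is accepted by a finite Büchi automaton, Theorem~\ref{Thm : SetZeroSpectrumAccZeroAutomatonAlternate} yields that the spectrum $X^{\Dig}(\pr)$ has no accumulation point in $\R$. From this I extract the two halves of $(3)$ separately. On the one hand, the $j$-th coordinate of $\boldsymbol{D}$ contains the symmetric alphabet $[\![-\ceil{\pr}+1,\ceil{\pr}-1]\!]$ (this being precisely what the hypothesis $\floor{\beta_j}\ge\ceil{\pr}-1$ together with $D_j\supseteq[\![-\floor{\beta_j},\floor{\beta_j}]\!]$ guarantees), so Lemma~\ref{Lem : ProductNotPisotSpectrumAcc} applies and $\pr$ is a Pisot number. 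On the other hand, since $D_i\supseteq[\![-\floor{\beta_i},\floor{\beta_i}]\!]$ for every $i\in\Int$, Proposition~\ref{Pro : NoAccPointAllUltPer} shows that $\DBi{i}(1)$ is eventually periodic for all $i\in\Int$, whence Theorem~\ref{Thm : AlleventuallyPeriodicAlgebraicField} gives $\beta_i\in\Q(\pr)$ for all $i\in\Int$. Together these two conclusions are exactly $(3)$.

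For $(3)\Rightarrow(1)$, let $\boldsymbol{D}=(D_0,\ldots,D_{p-1})$ be an arbitrary $p$-tuple of alphabets of integers containing $0$. Since each $\beta_j$ lies in $\Q(\pr)$ and the $D_i$ are finite sets of integers, the induced digit alphabet $\Dig$ of~\eqref{Eq : Digits} is a finite subset of $\Q(\pr)$, so Proposition~\ref{Pro : PisotExtendedFieldThenNoAccPoint} applies and $X^{\Dig}(\pr)$ has no accumulation point in $\R$; Theorem~\ref{Thm : SetZeroSpectrumAccZeroAutomatonAlternate} then furnishes a finite Büchi automaton accepting $Z(\B,\boldsymbol{D})$, and since $\boldsymbol{D}$ was arbitrary this is $(1)$. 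I do not expect a genuine obstacle: the only point needing care is the bookkeeping in $(2)\Rightarrow(3)$, namely checking that the \emph{single} $\boldsymbol{D}$ provided by $(2)$ simultaneously meets the hypothesis of Lemma~\ref{Lem : ProductNotPisotSpectrumAcc} (to get that $\pr$ is Pisot) and that of Proposition~\ref{Pro : NoAccPointAllUltPer} (to get eventual periodicity of the $\B^{(i)}$-expansions of $1$, hence $\beta_i\in\Q(\pr)$ via Theorem~\ref{Thm : AlleventuallyPeriodicAlgebraicField}) — which is exactly why the constraints on $\boldsymbol{D}$ in $(2)$ are imposed the way they are. All the real content is carried by Theorems~\ref{Thm : SetZeroSpectrumAccZeroAutomatonAlternate} and~\ref{Thm : AlleventuallyPeriodicAlgebraicField} and by Propositions~\ref{Pro : NoAccPointAllUltPer} and~\ref{Pro : PisotExtendedFieldThenNoAccPoint}.
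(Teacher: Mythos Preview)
Your proof is correct and follows the paper's argument essentially line for line: the same cycle $(1)\Rightarrow(2)\Rightarrow(3)\Rightarrow(1)$, invoking Theorem~\ref{Thm : SetZeroSpectrumAccZeroAutomatonAlternate}, Proposition~\ref{Pro : NoAccPointAllUltPer}, Theorem~\ref{Thm : AlleventuallyPeriodicAlgebraicField}, Lemma~\ref{Lem : ProductNotPisotSpectrumAcc} and Proposition~\ref{Pro : PisotExtendedFieldThenNoAccPoint} at exactly the same places. The only discrepancy is cosmetic: the paper simply calls $(1)\Rightarrow(2)$ ``straightforward'', whereas you construct an explicit $\boldsymbol{D}$ --- but note that the clause $\floor{\beta_j}\ge\ceil{\pr}-1$ in~(2) is a hypothesis on the base $\B$ itself rather than on $\boldsymbol{D}$, so enlarging $D_j$ is neither needed nor sufficient to meet the literal wording of~(2).
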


\begin{proof}
The implication $(1)\implies (2)$ is straightforward. Now, suppose that $(2)$ holds. By Theorem~\ref{Thm : SetZeroSpectrumAccZeroAutomatonAlternate} and Proposition~\ref{Pro : NoAccPointAllUltPer}, the greedy expansions $\DBi{i}(1)$ are eventually periodic for all $i\in \Int$. Then, by Theorem~\ref{Thm : AlleventuallyPeriodicAlgebraicField}, we get that $\pr$ is an algebraic integer and $\beta_i\in \Q(\pr)$ for all $i\in \Int$. Moreover, since there exists $j\in \Int$ such that $\floor{\beta_j}\ge \ceil{\pr}-1$, we obtain from Theorem~\ref{Thm : SetZeroSpectrumAccZeroAutomatonAlternate} and  Lemma~\ref{Lem : ProductNotPisotSpectrumAcc} that $\pr$ is a Pisot number. Hence, we have shown that $(2)\implies (3)$. Finally, the implication $(3)\implies (1)$ is obtained by combining Proposition~\ref{Pro : PisotExtendedFieldThenNoAccPoint} and Theorem~\ref{Thm : SetZeroSpectrumAccZeroAutomatonAlternate}.
\end{proof}

\section{Normalization in alternate base}
\label{Sec : Normalization}

In this section, we apply our results in order to show that the normalization in alternate base is computable by a finite Büchi automaton under certain hypotheses, in which case we construct such an automaton. The \emph{normalization function} $\nu_{\B,\boldsymbol{D}}\colon (\cup_{i=0}^{p-1}D_i)^\N \to (\cup_{i=0}^{p-1}[\![0,\ceil{\beta_i}-1]\!])^\N$ is the partial function mapping any $\B$-representation $a\in\otimes_{n\in\N}D_n$ of a real number $x\in[0,1)$ to the $\B$-expansion of $x$. We say that $\nu_{\B,\boldsymbol{D}}$ is \emph{computable by a finite Büchi automaton} if there exists a finite Büchi automaton accepting
the set
\[
	\{ (u,v)\in \bigotimes_{n\in\N}(D_n\times [\![0,\ceil{\beta_n}-1]\!] ) 
: \val_{\B}(u)=\val_{\B}(v) \text{ and }   \exists x\in [0,1),\ v=\DB(x)\}.
\]

Following the same lines as in the real base case, we start by constructing a \emph{converter} by using the zero automaton $\mathcal{Z}(\B,\boldsymbol{D})=(Q_{\B,\boldsymbol{D}}, (0,0),Q_{\B,\boldsymbol{D}}, \cup_{i=0}^{p-1}D_i,E)$ defined in Section~\ref{Sec : SpectrumAlternate}. Consider two $p$-tuples of alphabets $\boldsymbol{D}=(D_0,\ldots,D_{p-1})$ and $\boldsymbol{D'}=(D'_0,\ldots,D'_{p-1})$. We denote the $p$-tuple of alphabets $(D_0- D_0',\ldots,D_{p-1}- D'_{p-1})$ by $\boldsymbol{D}-\boldsymbol{D'}$. The \emph{converter from $\boldsymbol{D}$ to $\boldsymbol{D}'$} is the Büchi automaton
\[
	\mathcal{C}(\B,\boldsymbol{D},\boldsymbol{D'})
	=(Q_{\B,\boldsymbol{D}-\boldsymbol{D}'},(0,0),Q_{\B,\boldsymbol{D}-\boldsymbol{D}'},\cup_{i=0}^{p-1}(D_i\times D_i'),E')
\]
where $E'$ is the set of transitions defined as follows: for $(i,s),(j,t)\in Q_{\B,\boldsymbol{D}-\boldsymbol{D'}}$ and for $\couple{a}{b}\in \cup_{i=0}^{p-1}(D_i\times D_i')$, there is a transition 
\[
	(i,s) \xrightarrow[ \quad ]{\couple{a}{b}} (j,t)
\] 
if and only if $\couple{a}{b}\in D_i\times D_i'$ and there is a transition $(i,s) \xrightarrow[ \quad ]{a-b} (j,t)$ in $\mathcal{Z}(\B,\boldsymbol{D}-\boldsymbol{D}')$.

\begin{proposition}
The converter $\mathcal{C}(\B,\boldsymbol{D},\boldsymbol{D}')$ accepts the set 
\[
	\{\couple{u}{v}\in \bigotimes_{n\in\N}(D_n\times D_n') : 
	\val_{\B}(u)=\val_{\B}(v)\}.
\]
\end{proposition}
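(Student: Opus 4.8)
The plan is to observe that the converter $\mathcal{C}(\B,\boldsymbol{D},\boldsymbol{D}')$ is, by construction, nothing but the zero automaton $\mathcal{Z}(\B,\boldsymbol{D}-\boldsymbol{D}')$ with its transition labels rewritten: a transition labeled by a digit $c=a-b$ with $a\in D_i$, $b\in D_i'$ in the zero automaton gives rise in the converter to a transition with the same source and target but labeled by $\couple{a}{b}$. Since in both automata every state is final, a word is accepted precisely when it labels an infinite path starting at the initial state $(0,0)$. The argument then reduces the statement to Proposition~\ref{Pro : ZeroAutoAcceptsReprOf0} via the additivity of $\val_{\B}$.

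First I would record the transition-level correspondence coming straight from the definition of $E'$: for every $i\in\Int$, every $(i,s),(j,t)$ in the common state set $Q_{\B,\boldsymbol{D}-\boldsymbol{D}'}$, and every $a\in D_i$, $b\in D_i'$, there is a transition $(i,s)\xrightarrow{\couple{a}{b}}(j,t)$ in $\mathcal{C}(\B,\boldsymbol{D},\boldsymbol{D}')$ if and only if there is a transition $(i,s)\xrightarrow{a-b}(j,t)$ in $\mathcal{Z}(\B,\boldsymbol{D}-\boldsymbol{D}')$. Iterating, a pair $\couple{u}{v}\in\bigotimes_{n\in\N}(D_n\times D_n')$ labels an infinite path from $(0,0)$ in the converter (necessarily accepting, as all states are final) if and only if the sequence $(u_n-v_n)_{n\in\N}$ labels such a path in the zero automaton, the two paths visiting the same sequence of states.

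Next I would invoke Proposition~\ref{Pro : ZeroAutoAcceptsReprOf0} for the alternate base $\B$ and the $p$-tuple $\boldsymbol{D}-\boldsymbol{D}'$: the automaton $\mathcal{Z}(\B,\boldsymbol{D}-\boldsymbol{D}')$ accepts exactly $Z(\B,\boldsymbol{D}-\boldsymbol{D}')$. Hence $\couple{u}{v}$ is accepted by the converter if and only if $(u_n-v_n)_{n\in\N}\in Z(\B,\boldsymbol{D}-\boldsymbol{D}')$. Now $(u_n-v_n)_{n\in\N}$ lies in $\bigotimes_{n\in\N}(D_n-D_n')$ because $u_n\in D_n$ and $v_n\in D_n'$, and by additivity of $\val_{\B}$ we have $\val_{\B}((u_n-v_n)_{n\in\N})=\val_{\B}(u)-\val_{\B}(v)$; thus membership in $Z(\B,\boldsymbol{D}-\boldsymbol{D}')$ is equivalent to $\val_{\B}(u)=\val_{\B}(v)$, which is the claim.

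The only point requiring care — the closest thing to an obstacle — is that the label map $D_i\times D_i'\to D_i-D_i'$, $(a,b)\mapsto a-b$, need not be injective, so a priori a zero-automaton path realizing the difference sequence of $\couple{u}{v}$ need not come from the pair $\couple{u}{v}$ itself. This causes no trouble, since we keep $\couple{u}{v}$ fixed throughout: given a path in $\mathcal{Z}(\B,\boldsymbol{D}-\boldsymbol{D}')$ with labels $u_n-v_n$, the condition $\couple{u_n}{v_n}\in D_n\times D_n'$ holds by hypothesis, so the defining condition of a converter transition is met and the same path is relabeled by $\couple{u}{v}$ in $\mathcal{C}(\B,\boldsymbol{D},\boldsymbol{D}')$; conversely a converter path for $\couple{u}{v}$ obviously projects to a zero-automaton path for $(u_n-v_n)_{n\in\N}$. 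Putting these equivalences together yields the proposition.
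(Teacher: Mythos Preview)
Your proposal is correct and takes essentially the same approach as the paper, which simply states that the result is a direct consequence of Proposition~\ref{Pro : ZeroAutoAcceptsReprOf0}. You have spelled out in detail the relabeling correspondence between the converter and the zero automaton that makes this consequence immediate, including the harmless non-injectivity of $(a,b)\mapsto a-b$.
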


\begin{proof}
This is a direct consequence of Proposition~\ref{Pro : ZeroAutoAcceptsReprOf0}.
\end{proof}

\begin{proposition}
\label{Pro : ConverterFinite}
If $\pr$ is a Pisot number and $\beta_i\in \Q(\pr)$ for all $i\in \Int$, then the converter $\mathcal{C}_{\B,\boldsymbol{D},\boldsymbol{D'}}$ is finite.
\end{proposition}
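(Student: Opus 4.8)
The strategy is to reduce finiteness of the converter $\mathcal{C}(\B,\boldsymbol{D},\boldsymbol{D}')$ to finiteness of an appropriate zero automaton, and then to invoke the spectrum criterion already established. First I would observe that, by definition, the underlying set of states of $\mathcal{C}(\B,\boldsymbol{D},\boldsymbol{D}')$ is exactly $Q_{\B,\boldsymbol{D}-\boldsymbol{D}'}$, the state set of the zero automaton $\mathcal{Z}(\B,\boldsymbol{D}-\boldsymbol{D}')$ (the transitions are merely relabelled by pairs $\couple{a}{b}$ whose difference $a-b$ labels a transition of the zero automaton). Hence the converter is finite if and only if $\mathcal{Z}(\B,\boldsymbol{D}-\boldsymbol{D}')$ is finite.

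Next I would apply Theorem~\ref{Thm : SetZeroSpectrumAccZeroAutomatonAlternate} to the $p$-tuple of alphabets $\boldsymbol{D}-\boldsymbol{D}'=(D_0-D_0',\ldots,D_{p-1}-D_{p-1}')$. Note that each $D_i-D_i'$ is a finite set of integers containing $0$ (since $0\in D_i$ and $0\in D_i'$, as these alphabets all contain $0$ by our standing assumptions), so the theorem applies. It gives that $\mathcal{Z}(\B,\boldsymbol{D}-\boldsymbol{D}')$ is finite if and only if the spectrum $X^{\Dig'}(\pr)$ has no accumulation point in $\R$, where $\Dig'$ is the digit alphabet obtained from $\boldsymbol{D}-\boldsymbol{D}'$ via~\eqref{Eq : Digits}. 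Since $\Dig'$ is a finite subset of $\Q(\pr)$ and $\pr$ is a Pisot number (using the hypothesis $\beta_i\in\Q(\pr)$ for all $i\in\Int$), Proposition~\ref{Pro : PisotExtendedFieldThenNoAccPoint} guarantees that $X^{\Dig'}(\pr)$ has no accumulation point in $\R$. Chaining these equivalences yields the finiteness of $\mathcal{C}(\B,\boldsymbol{D},\boldsymbol{D}')$.

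There is no genuine obstacle here: the proposition is essentially an application of the machinery already built. The only point that deserves a word of care is the bookkeeping identifying the converter's state set with that of the zero automaton for the difference alphabets, and checking that $\boldsymbol{D}-\boldsymbol{D}'$ again satisfies the hypotheses ($0$ in each alphabet) needed to invoke Theorem~\ref{Thm : SetZeroSpectrumAccZeroAutomatonAlternate} and Proposition~\ref{Pro : PisotExtendedFieldThenNoAccPoint}. Accordingly, the proof can be stated quite briefly.

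\begin{proof}
By definition, the set of states of the converter $\mathcal{C}(\B,\boldsymbol{D},\boldsymbol{D}')$ equals the set of states $Q_{\B,\boldsymbol{D}-\boldsymbol{D}'}$ of the zero automaton $\mathcal{Z}(\B,\boldsymbol{D}-\boldsymbol{D}')$. Hence it suffices to prove that the latter is finite. Since $0$ belongs to each $D_i$ and each $D_i'$, the difference $D_i-D_i'$ is a finite alphabet of integers containing $0$, so we may apply Theorem~\ref{Thm : SetZeroSpectrumAccZeroAutomatonAlternate} to the $p$-tuple $\boldsymbol{D}-\boldsymbol{D}'$: the zero automaton $\mathcal{Z}(\B,\boldsymbol{D}-\boldsymbol{D}')$ is finite if and only if the associated spectrum (built from $\pr$ and the digit alphabet corresponding to $\boldsymbol{D}-\boldsymbol{D}'$ via~\eqref{Eq : Digits}) has no accumulation point in $\R$. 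As $\pr$ is a Pisot number and $\beta_i\in\Q(\pr)$ for all $i\in\Int$, this holds by Proposition~\ref{Pro : PisotExtendedFieldThenNoAccPoint}. Therefore $\mathcal{Z}(\B,\boldsymbol{D}-\boldsymbol{D}')$ is finite, and so is the converter $\mathcal{C}(\B,\boldsymbol{D},\boldsymbol{D}')$.
\end{proof}
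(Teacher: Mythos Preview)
Your proof is correct and follows essentially the same route as the paper. The paper invokes Theorem~\ref{Thm : MainEquivalences} together with Theorem~\ref{Thm : SetZeroSpectrumAccZeroAutomatonAlternate} to conclude finiteness of $\mathcal{Z}(\B,\boldsymbol{D}-\boldsymbol{D}')$, whereas you bypass Theorem~\ref{Thm : MainEquivalences} and appeal directly to Proposition~\ref{Pro : PisotExtendedFieldThenNoAccPoint} and Theorem~\ref{Thm : SetZeroSpectrumAccZeroAutomatonAlternate}; since the relevant implication of Theorem~\ref{Thm : MainEquivalences} is itself proved via these two results, the arguments are the same up to packaging.
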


\begin{proof}
By Theorems~\ref{Thm : MainEquivalences} and~\ref{Thm : SetZeroSpectrumAccZeroAutomatonAlternate}, the zero automaton $\mathcal{Z}(\B,\boldsymbol{D}-\boldsymbol{D'})$ is finite. Hence, so is the converter $\mathcal{C}_{\B,\boldsymbol{D},\boldsymbol{D'}}$. 
\end{proof}

In the case where $\B$ is a Parry alternate base, i.e., where all $\qDBi{i}(1)$ are eventually periodic, a deterministic finite automaton $\A_{\B}$ accepting $\Fac(\{\DB(x) : x\in[0,1)\})$ was built in \cite{CharlierCisternino2021}, thus showing that the corresponding $\B$-shift is sofic. Here, we consider a modification of this automaton in order to get a Büchi  automaton accepting $\{\DB(x) : x\in[0,1)\}$. Suppose that 
\[
	\qDBi{i}(1)=t_0^{(i)}\cdots t_{m_i-1}^{(i)} \big(t_{m_i}^{(i)}\cdots t_{m_i+n_i-1}^{(i)}\big)^\omega
\]
for all $i\in \Int$. Without loss of generality, we suppose that $\DBi{i}(1)$ has a non-zero preperiod for all $i\in \Int$, i.e., in the case of a purely periodic expansion $(t_0\cdots t_{n-1})^\omega$, we work with the writing $t_0(t_1\cdots t_{n-1}t_0)^\omega$ instead. Then we define the Büchi automaton
\[
	\mathcal{B}_{\B}=(Q,(0,0,0),F,[\![0,\max_{0\le i<p}\ceil{\beta_i}-1]\!],E)
\] 
where the set of states is 
\[
	Q=\{(i,j,k) : 
		i,j\in\Int,\ k\in[\![0,m_i+n_i-1]\!]\},
\]
the set of final states is
\[
	F=\big\{(i,i,0) : i\in\Int\big\}
\] 
and the (partial) transition function $E\colon Q\times [\![0,\max_{0\le i<p}\ceil{\beta_i}-1]\!]\to Q$ is defined as follows: for each $i,j\in\Int$ and each $k\in[\![0,m_i+n_i-1]\!]$, 
we have 
\[
	E((i,j,k),t_k^{(i)})=
	\begin{cases}
		(i,(j+1)\bmod p,k+1), & \text{if }k\ne m_i+n_i-1;\\
		(i,(j+1)\bmod p,m_i), & \text{otherwise}
	\end{cases}
\]
and for all $s\in[\![0,t_k^{(i)}-1]\!]$, we have
\[
	E((i,j,k),s)=((j+1)\bmod p,(j+1)\bmod p,0).
\]

\begin{proposition}
\label{Pro : BuchiDB}
If $\DBi{i}(1)$ is eventually periodic for all $i\in \Int$ then the Büchi automaton $\mathcal{B}_{\B}$ accepts the set $\{d_{\B}(x) : x\in[0,1)\}$.
\end{proposition}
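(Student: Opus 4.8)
The plan is to show that the Büchi automaton $\mathcal{B}_{\B}$ accepts an infinite word $a$ over $[\![0,\max_{0\le i<p}\ceil{\beta_i}-1]\!]$ if and only if $a=\DB(x)$ for some $x\in[0,1)$, by relating runs in $\mathcal{B}_{\B}$ to the admissibility condition of Theorem~\ref{thm:Parry}. The role of the third coordinate $k$ in a state $(i,j,k)$ is to track, letter by letter, how far a prefix of $a$ currently agrees with the quasi-greedy expansion $\qDBi{i}(1)=t_0^{(i)}t_1^{(i)}\cdots$ after the last time the run ``reset''; the first coordinate $i$ records which shifted base $\B^{(i)}$ that comparison started in; and the second coordinate $j$ records the current position modulo $p$, so that $j$ advances by $1$ on every transition. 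The key structural observations are: (i) every transition increments $j$ by one modulo $p$; (ii) the run resets to a state of the form $(j',j',0)$ with $j'=(j+1)\bmod p$ precisely when the current letter $s$ is strictly smaller than $t_k^{(i)}$; and (iii) the run passes through a final state $(i,i,0)$ exactly when $j=i$ and $k=0$, i.e. when a fresh comparison begins at a position congruent to $i$ modulo $p$.

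First I would prove the ``only if'' direction. Suppose $a$ is accepted, with accepting run $(i_n,j_n,k_n)_{n\ge0}$. Fix any position $n$ and let $r$ be the largest index $\le n$ at which the run was at a reset state $(j_r,j_r,0)$ with $j_r\equiv n-?$... more precisely, let $r\le n$ be the last reset, so $i_r=j_r=(r)\bmod p$ after accounting for the initial state, $k_r=0$, and for $r\le \ell\le n$ the letter $a_\ell$ equals $t_{\ell-r}^{(i_r)}$ as long as no reset occurs, except possibly at $\ell=n$ where $a_n\le t_{n-r}^{(i_r)}$. Then $a_n a_{n+1}\cdots$, read against $\qDBi{n\bmod p}(1)$, satisfies $a_na_{n+1}\cdots\le_{\lex}\qDBi{n\bmod p}(1)$; strictness follows because the run is infinite and must eventually either take a strictly-smaller letter or loop in the periodic part, and in the latter case $a$ would equal a tail of $\qDBi{i}(1)$ which, being the quasi-greedy expansion, is itself lexicographically maximal — one checks this forces strict inequality at some later position, so the lexicographic inequality against $\DBi{n\bmod p}^*(1)$ holds, hence against $\DBi{n}^*(1)$. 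By Theorem~\ref{thm:Parry}, $a=\DB(x)$ for some $x\in[0,1)$. The visits to final states $(i,i,0)$ happen infinitely often because the accepting condition guarantees it, but I would note this is automatic: I would instead argue that any $a$ satisfying the Parry condition yields a run visiting resets infinitely often, unless $a$ is eventually equal to a shifted $\qDBi{i}(1)$, and the ``non-zero preperiod'' normalization made at the outset rules out the purely periodic degenerate case landing outside $F$.

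For the ``if'' direction, given $x\in[0,1)$ with $a=\DB(x)$, I would build the run explicitly: start at $(0,0,0)$, and at each step, if the current state is $(i,j,k)$ and the incoming letter $a_n$ equals $t_k^{(i)}$, follow the first branch of $E$ (advancing $k$ or wrapping to $m_i$); if $a_n<t_k^{(i)}$, follow the reset branch to $((j{+}1)\bmod p,(j{+}1)\bmod p,0)$. Theorem~\ref{thm:Parry} guarantees $a_na_{n+1}\cdots<_{\lex}\DBi{n}^*(1)\le_{\lex}\qDBi{n}(1)$, which ensures we never need the (undefined) transition on a letter $a_n>t_k^{(i)}$, so the run is well-defined and infinite. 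The strict lexicographic inequality at every position $n$ forces a reset to occur at infinitely many positions — at a reset following position $n$, the new state is $(j',j',0)$ which is final — so the run is accepting. The main obstacle I anticipate is precisely the bookkeeping around strict versus non-strict inequalities and the interaction with the quasi-greedy expansion: one must carefully use that $\qDBi{i}(1)$ is the lexicographically largest admissible sequence starting at shift $i$, so that ``agreeing with $\qDBi{i}(1)$ forever'' is compatible with admissibility of tails but the preperiod normalization guarantees the accepting state is still visited; conversely, when a run follows the $t_k^{(i)}$-branch through the periodic part without ever resetting, one must show this still corresponds to a legitimate $\DB(x)$, which is where comparing $a$ against all shifts $\DBi{n}^*(1)$ rather than just one shift becomes essential.
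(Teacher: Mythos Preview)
Your approach differs from the paper's. The paper does not argue via the Parry admissibility condition of Theorem~\ref{thm:Parry} at all; instead it observes that an infinite word is accepted by $\mathcal{B}_{\B}$ if and only if it factors as $u_0u_1u_2\cdots$ where each $u_n$ is a first-return block to a final state, notes that each such $u_n$ lies in the set
\[
Y(\B^{(i)},j)=\{t_0^{(i)}\cdots t_{\ell-2}^{(i)}s : \ell\ge1,\ \ell\bmod p=j,\ t_{\ell-1}^{(i)}>0,\ s\in[\![0,t_{\ell-1}^{(i)}-1]\!]\},
\]
and then invokes Proposition~45 of \cite{CharlierCisternino2021}, which expresses $\{\DB(x):x\in[0,1)\}$ exactly as the set of all such infinite concatenations. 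So the paper's proof is essentially a reduction to an already-established combinatorial description, whereas you attempt a self-contained argument through the admissibility criterion.

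Your route is viable, but the ``only if'' direction has a real gap. At a non-reset position $n$, the state $(i,j,k)$ only tracks the comparison of $a_ra_{r+1}\cdots$ with $\qDBi{i}(1)$, where $r$ is the last reset and $i=r\bmod p$; it tells you nothing directly about the comparison of $a_na_{n+1}\cdots$ with $\qDBi{n}(1)$. To bridge this you need the shift property of quasi-greedy expansions in alternate bases, namely that $\sigma^s\big(\qDBi{i}(1)\big)\le_{\lex}\qDBi{i+s}(1)$ for all $s\ge0$. With that in hand, the argument is clean: B\"uchi acceptance forces a reset at some $m>n$, so $a_na_{n+1}\cdots<_{\lex}\sigma^{n-r}\big(\qDBi{i}(1)\big)\le_{\lex}\qDBi{n}(1)$. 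You never isolate this lemma; the sentence ``one checks this forces strict inequality at some later position'' is where the actual content is hiding. Separately, your discussion of the B\"uchi condition is tangled: in the ``only if'' direction you are \emph{given} that the run visits final states infinitely often, which (thanks to the non-zero preperiod convention, so that $k=0$ is reached only via a reset) immediately gives infinitely many resets; there is no need to ``instead argue'' anything, and the scenario of looping forever in the periodic part is simply ruled out by acceptance. Your ``if'' direction is essentially correct.
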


\begin{proof}
For an alternate base $\B$ of length $p$ with $\qDB(1)=t_0t_1t_2\cdots$ and for $j\in\Int$, we define the set
\[
	Y(\B,j)=\{t_0\cdots t_{\ell-2}s :
	\ell\in\N_{\ge 1},\
	\ell\bmod p= j,\
	t_{\ell-1}>0,\
	s\in[\![0,t_{\ell-1}-1]\!]
	\}.
\]
An infinite word is accepted by $\mathcal{B}_{\B}$ if and only if it can be factored as $u_0u_1u_2\cdots$ where each factor $u_n$ corresponds to a first return to a final state, i.e., for all $n\in\N$, there is a path labeled by $u_n$ from a state of the form $(i,i,0)$ to a state of the form $(j,j,0)$ and $u_n$ is the shortest next factor with this property. Since we have built $\mathcal{B}_{\B}$ by using a non-zero preperiod for each $\qDBi{i}(1)$, each such factor $u_n$ must belong to the set $ Y(\B^{(|u_0|+\cdots+|u_{n-1}|)},|u_n|)$. Proposition~45 in \cite{CharlierCisternino2021} states that
\[
	\{\DB(x) : x\in[0,1)\}
	=\bigcup_{h_0=0}^{p-1}Y(\B,h_0)
	\Bigg( \bigcup_{h_1=0}^{p-1}Y(\B^{(h_0)},h_1)
	\Bigg( \bigcup_{h_2=0}^{p-1}Y(\B^{(h_0+h_1)},h_2)
	\Bigg(
	\cdots
	\Bigg)\Bigg)\Bigg).
\]
The conclusion follows.
\end{proof}

\begin{theorem}
\label{Thm : Normalization}
If $\pr$ is a Pisot number and $\beta_i\in \Q(\pr)$ for all $i\in \Int$, then the normalization function $\nu_{\B,\boldsymbol{D}}$ is computable by a finite Büchi automaton.
\end{theorem}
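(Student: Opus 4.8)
The plan is to build the normalization automaton as a suitable product of the converter $\mathcal{C}(\B,\boldsymbol{D},\boldsymbol{D}')$ with the Büchi automaton $\mathcal{B}_{\B}$ recognizing the $\B$-expansions, where $\boldsymbol{D}'$ is the $p$-tuple of greedy digit alphabets $([\![0,\ceil{\beta_0}-1]\!],\ldots,[\![0,\ceil{\beta_{p-1}}-1]\!])$. First I would observe that since $\pr$ is a Pisot number and $\beta_i\in\Q(\pr)$ for all $i\in\Int$, Theorem~\ref{Thm : PisotExtendedFieldThenUltPer} guarantees that $\DBi{i}(1)$ is eventually periodic for all $i\in\Int$, so $\B$ is a Parry alternate base and the automaton $\mathcal{B}_{\B}$ of Proposition~\ref{Pro : BuchiDB} is finite and accepts $\{\DB(x):x\in[0,1)\}$. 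Likewise, by Proposition~\ref{Pro : ConverterFinite}, the converter $\mathcal{C}(\B,\boldsymbol{D},\boldsymbol{D}')$ is finite and accepts the set of pairs $\couple{u}{v}\in\bigotimes_{n\in\N}(D_n\times[\![0,\ceil{\beta_n}-1]\!])$ with $\val_{\B}(u)=\val_{\B}(v)$.

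Next I would take the synchronous product of these two finite Büchi automata: the converter reads the pair $\couple{u}{v}$ letter by letter and checks $\val_{\B}(u)=\val_{\B}(v)$, while a parallel copy of $\mathcal{B}_{\B}$ reads only the second component $v$ and checks that $v=\DB(x)$ for some $x\in[0,1)$. A state of the product is a pair (state of $\mathcal{C}$, state of $\mathcal{B}_{\B}$), a transition reads $\couple{a}{b}$ and updates both components using $a-b$ (resp.\ $b$) as in the respective automata, and a state is final when both components are final. Since a set accepted by a finite Büchi automaton is closed under intersection (via the standard product construction with a flag alternating between the two acceptance conditions, as the paper already did in the proof of Theorem~\ref{Thm : SetZeroSpectrumAccZeroAutomatonAlternate}), the resulting automaton is finite and Büchi. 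One then checks that an infinite word $\couple{u}{v}$ is accepted by this product if and only if $\val_{\B}(u)=\val_{\B}(v)$ and $v=\DB(x)$ for some $x\in[0,1)$, which is exactly the set whose acceptance defines computability of $\nu_{\B,\boldsymbol{D}}$.

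The main obstacle is the bookkeeping of the two Büchi acceptance conditions in the product: a naive product of two Büchi automata does not accept the intersection of their languages, so one must interleave the two acceptance requirements by adding a small finite flag (of size $2$ or $3$) that waits for a final state of the converter, then for a final state of $\mathcal{B}_{\B}$, then resets, declaring a state final only at the resets. I expect this to be entirely routine given that the excerpt has already carried out precisely this kind of flagged product in the proof of Theorem~\ref{Thm : SetZeroSpectrumAccZeroAutomatonAlternate}; the only care needed is to verify that the digit-pair alphabets of $\mathcal{C}$ and the digit alphabet of $\mathcal{B}_{\B}$ line up on the second component (both being $[\![0,\ceil{\beta_n}-1]\!]$ at position $n$), so that the synchronous reading is well defined. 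Finally I would note that the construction is effective: all the ingredients—the eventually periodic $\qDBi{i}(1)$, the finite zero automaton $\mathcal{Z}(\B,\boldsymbol{D}-\boldsymbol{D}')$, the converter, and $\mathcal{B}_{\B}$—are explicitly constructible, hence so is the normalization automaton.
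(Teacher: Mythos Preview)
Your proposal is correct and follows essentially the same route as the paper: invoke Theorem~\ref{Thm : PisotExtendedFieldThenUltPer} to ensure $\B$ is Parry, use Proposition~\ref{Pro : BuchiDB} for the finite B\"uchi automaton $\mathcal{B}_{\B}$ accepting $\{\DB(x):x\in[0,1)\}$, use Proposition~\ref{Pro : ConverterFinite} for the finite converter $\mathcal{C}(\B,\boldsymbol{D},\boldsymbol{D}')$, and intersect. One small simplification you can exploit: in the converter every state is final, so the product B\"uchi acceptance condition reduces to that of $\mathcal{B}_{\B}$ alone and no alternating flag is actually needed.
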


\begin{proof}
If $\pr$ is a Pisot number and $\beta_i\in \Q(\pr)$ for all $i\in \Int$, then by Theorem~\ref{Thm : PisotExtendedFieldThenUltPer}, the greedy $\B^{(i)}$-expansions of $1$ are eventually periodic for all $i\in \Int$.  By Proposition~\ref{Pro : BuchiDB}, the finite Büchi automaton $\mathcal{B}_{\B}$ accepts the set $\{\DB(x) : x\in[0,1)\}$. Thanks to this automaton, we construct a finite Büchi automaton accepting the set 
\[
	\big\{ (u,v)\in \bigotimes_{n=0}^{+\infty}(D_n\times [\![0,\ceil{\beta_n}-1]\!] ) : \exists x\in [0,1),\ v=\DB(x)\big \}.
\]
By intersecting the latter Büchi automaton with the converter $\mathcal{C}(\B,\boldsymbol{D},\boldsymbol{D}')$ where $\boldsymbol{D}'=([\![0,\ceil{\beta_0}-1]\!],\ldots,[\![0,\ceil{\beta_{p-1}}-1]\!])$, which is finite by Proposition~\ref{Pro : ConverterFinite}, we get a finite Büchi automaton accepting the set
\[
	\{ (u,v)\in \bigotimes_{n=0}^{+\infty}(D_n\times [\![0,\ceil{\beta_n}-1]\!] )
: \val_{\B}(u)=\val_{\B}(v) \text{ and }   \exists x\in [0,1),\ v=\DB(x)\}.  
\]
\end{proof}

\begin{example}
Consider again the alternate base $\B=(\frac{1+ \sqrt{13}}{2},\frac{5+ \sqrt{13}}{6})$. We have $\DBi{0}(1)=2010^\omega$ and $\DBi{1}(1)=110^\omega$, hence $\qDBi{0}(1)=200(10)^\omega$ and $\qDBi{1}(1)=(10)^\omega$. As explained above, since $\qDBi{1}(1)$ is purely periodic, we consider  the writing $1(01)^\omega$ instead of $(10)^\omega$. We obtain the Büchi automaton depicted in Figure~\ref{Fig : Automaton-BB-1+Sqrt13-accessible}. 
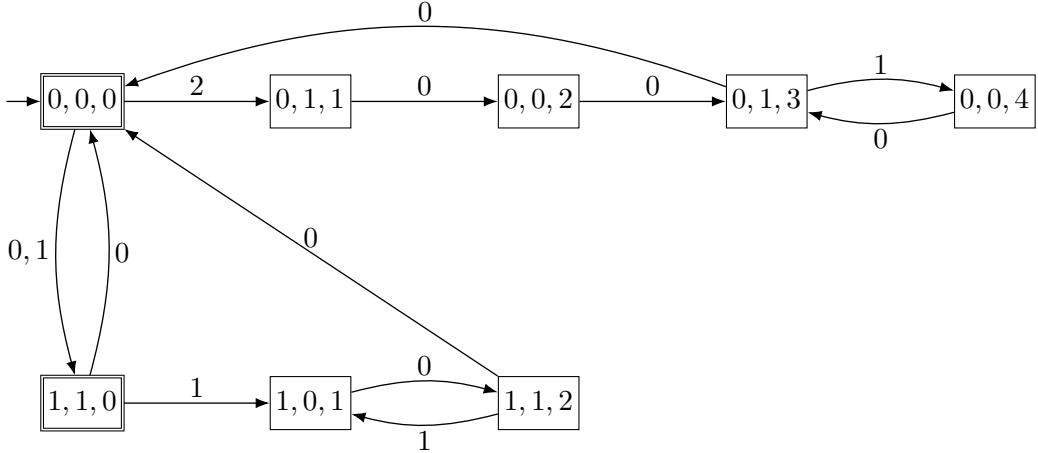
\begin{figure}[htb]
\centering
\begin{tikzpicture}
\tikzstyle{every node}=[shape=rectangle, fill=none, draw=black,
minimum size=20pt, inner sep=2pt]
\node[accepting] (1) at (0,4) {$0,0,0$};
\node (2) at (3,4) {$0,1,1$};
\node  (3) at (6,4) {$0,0,2$};
\node (4) at (9,4) {$0,1,3$};
\node (5) at (12,4) {$0,0,4$};

\node[accepting] (6) at (0,0) {$1,1,0$};
\node (7) at (3,0) {$1,0,1$};
\node (8) at (6,0) {$1,1,2$};

\tikzstyle{every node}=[shape=rectangle, fill=none, draw=black, minimum size=15pt, inner sep=2pt]
\tikzstyle{every path}=[color=black, line width=0.5 pt]
\tikzstyle{every node}=[shape=rectangle, minimum size=5pt, inner sep=2pt]
\draw [-Latex] (-1,4) to node [above] {$ $} (1); 

\draw [-Latex] (1) to node [above] {$2$} (2); 
\draw [-Latex] (1) to [bend right=15] node [left] {$0,1$} (6); 

\draw [-Latex] (2) to node [above] {$0$} (3); 

\draw [-Latex] (3) to node [above] {$0$} (4); 

\draw [-Latex] (4) to [bend left=15] node [above] {$1$} (5); 
\draw [-Latex] (4) to [bend right=20] node [above] {$0$} (1); 

\draw [-Latex] (5) to [bend left=15] node [below] {$0$} (4); 

\draw [-Latex] (6) to node [above] {$1$} (7); 
\draw [-Latex] (6) to [bend right=15] node [right] {$0$} (1); 

\draw [-Latex] (7) to [bend left=15] node [above] {$0$} (8); 
\draw [-Latex] (8) to [bend left=15] node [below] {$1$} (7); 
\draw [-Latex] (8) to node [above] {$0$} (1); 

\end{tikzpicture}
\caption{A Büchi automaton accepting the set $\{\DB(x) : x\in[0,1)\}$ for $\B=(\frac{1+ \sqrt{13}}{2},\frac{5+ \sqrt{13}}{6})$.}
\label{Fig : Automaton-BB-1+Sqrt13-accessible}
\end{figure}
Following the same steps as described in the proof of Theorem~\ref{Thm : Normalization}, from the automata depicted in Figures~\ref{Fig : ZeroAutomaton-1+Sqrt13-accessible} and~\ref{Fig : Automaton-BB-1+Sqrt13-accessible}, we obtain a finite Büchi automaton computing the normalization function in base $\B$ over the pair of alphabets $\boldsymbol{D}=([\![-2,2]\!],[\![-1,1]\!])$.
\end{example}

\section{Further work}

We have shown that properties of alternate base numeration systems defined using a $p$-tuple of bases $(\beta_0,\dots,\beta_{p-1})$ are related to the geometry of a generalized Erd\H os spectrum of the number $\pr=\prod_{i=0}^{p-1}\beta_i$. For $p=1$, i.e., for classical numeration systems with one base $\beta$, the Erd\H os spectrum proved to be useful in different situations. V\'avra in~\cite{Vavra2021} used the spectrum to characterize complex bases $\beta$ for which, with a suitably chosen digit set, every element of the field $\Q(\beta)$ has an eventually periodic representation. In~\cite{HareMasakovaVavra2018} the question whether a numeration system with a complex base $\beta$ and a digit set $\Dig\subset\C$ allows a representation of any complex number is reformulated as the question whether the corresponding Erd\H os spectrum is relatively dense. Let us mention that, so far, the topological properties of the spectrum described in Theorem~\ref{Thm : AccPointAkiyamaKomornikFeng} are known only when the base $\beta$ is a real number greater than $1$ and the alphabet $\Dig$ is a symmetric set of consecutive integers, i.e., of the form $[\![-d,d]\!]$ for some $d\in\N$. In particular, in this case, we have a sharp bound on $d$ for which the spectrum of a non-Pisot base $\beta>1$ has an accumulation point: the spectrum $X^d(\beta)$ has an accumulation point in $\R$ if and only if $d<\beta-1$. Analogous results for real and complex bases $\beta$ and arbitrary finite alphabets $\Dig\subset\R$, or $\Dig\subset\C$, would improve bounds on digit sets in several diverse problems, namely also in our Theorem~\ref{Thm : MainEquivalences}. For more details, see \cite{FrougnyPelantova2018}.

One of the results of this paper is that if $\pr=\prod_{i=0}^{p-1}\beta_i$ is a Pisot number and $\beta_0,\ldots,\beta_{p-1}$ belong to $\Q(\pr)$, the expansions $\DBi{i}(1)$ are all eventually periodic. We have illustrated that $\pr$ being Pisot is not a necessary condition. For $p=1$, bases $\beta$ for which $1$ has an eventually periodic greedy expansion are called Parry numbers. Solomyak obtained algebraic properties of Parry numbers~\cite{Solomyak1994}. It would be interesting to study the analogy of Parry numbers in the context of alternate bases. In particular, to find bounds on the algebraic conjugates of $\pr$. 

In the case where $p=1$, for every sequence of non-negative digits $a=a_0a_1a_2\cdots$ satisfying the lexicographic condition $a_na_{n+1}a_{n+2}\cdots \le_{\lex} a$ for all $n\in\N$, there exists a unique $\beta>1$ such that $d_\beta(1)=a$~\cite{Parry1960}. It is not clear yet whether for $p$ integer digit sequences $a^{(0)}, \dots, a^{(p-1)}$ satisfying analogous lexicographic conditions, there exists a unique $p$-tuple of bases $\beta_0,\dots,\beta_{p-1}$ such that $\DBi{i}(1)=a^{(i)}$ for $i=0,1,\dots,p-1$. Corollary~\ref{Cor : UniquenessDB} represents a first step towards this direction.

\section{\textbf{Acknowledgment}}
Émilie Charlier is supported by the FNRS grant J.0034.22.
Célia Cisternino is supported by the FNRS grant 1.A.564.19F.
Zuzana Mas\'akov\'a and Edita Pelantov\'a are supported by the European Regional Development Fund project CZ.02.1.01/0.0/0.0/16\_019/0000778.

\bibliographystyle{abbrv}
\bibliography{Normalization}

\end{document}